\begin{document}

\topmargin-0.1in
\textheight8.5in
\textwidth5.5in
\footskip35pt
\oddsidemargin.5in
\evensidemargin.5in

\newcommand{\V}{{\cal V}}      % cal in math mode
\renewcommand{\O}{{\cal O}}
\newcommand{\LL}{\cal L}
\newcommand{\Ext}{\hbox{Ext}}
\newcommand{\Hom}{\hbox{Hom}}
\newcommand{\Proj}{\hbox{Proj}}
\newcommand{\GrMod}{\hbox{GrMod}}
\newcommand{\grmod}{\hbox{gr-mod}}
\newcommand{\tors}{\hbox{tors}}
\newcommand{\rank}{\hbox{rank}}
\newcommand{\End}{\hbox{End}}
\newcommand{\GKdim}{\hbox{GKdim}}
\newcommand{\im}{\hbox{im}}
\renewcommand{\ker}{\hbox{ker}}

\newcommand{\lonto}{{\protect \longrightarrow\!\!\!\!\!\!\!\!\longrightarrow}}

\newcommand{\m}{{\mu}}
\newcommand{\gl}{{\frak g}{\frak l}}
\newcommand{\ssl}{{\frak s}{\frak l}}

\newcommand{\ds}{\displaystyle}
\newcommand{\s}{\sigma}
\renewcommand{\l}{\lambda}
\renewcommand{\a}{\alpha}
\renewcommand{\b}{\beta}
\newcommand{\G}{\Gamma}
\newcommand{\g}{\gamma}
\newcommand{\z}{\zeta}
\newcommand{\e}{\epsilon}
\renewcommand{\d}{\delta}
\newcommand{\p}{\rho}
\renewcommand{\t}{\tau}

\newcommand{\C}{{\mathbb C}}
\newcommand{\N}{{\mathbb N}}
\newcommand{\Z}{{\mathbb Z}}
\newcommand{\ZZ}{{\mathbb Z}}
\newcommand{\K}{{\mathcal K}}

\newcommand{\rowxy}{(x\ y)}
\newcommand{\colxy}{ \left({\begin{array}{c} x \\ y \end{array}}\right)}
\newcommand{\scolxy}{\left({\begin{smallmatrix} x \\ y
\end{smallmatrix}}\right)}

\renewcommand{\P}{{\Bbb P}}

\newcommand{\la}{\langle}
\newcommand{\ra}{\rangle}
\newcommand{\tensor}{\otimes}

\newarrow{Equals}{=}{=}{=}{=}{=}

\newtheorem{thm}{Theorem}[section]
\newtheorem{lemma}[thm]{Lemma}
\newtheorem{cor}[thm]{Corollary}
\newtheorem{prop}[thm]{Proposition}

\theoremstyle{definition}
\newtheorem{defn}[thm]{Definition}
\newtheorem{notn}[thm]{Notation}
\newtheorem{ex}[thm]{Example}
\newtheorem{rmk}[thm]{Remark}
\newtheorem{rmks}[thm]{Remarks}
\newtheorem{note}[thm]{Note}
\newtheorem{example}[thm]{Example}
\newtheorem{problem}[thm]{Problem}
\newtheorem{ques}[thm]{Question}
\newtheorem{thingy}[thm]{}

\newcommand{\onto}{{\protect \rightarrow\!\!\!\!\!\rightarrow}}
\newcommand{\donto}{\put(0,-2){$|$}\put(-1.3,-12){$\downarrow$}{\put(-1.3,-14.5) 

{$\downarrow$}}}

\newcounter{letter}
\renewcommand{\theletter}{\rom{(}\alph{letter}\rom{)}}

\newenvironment{lcase}{\begin{list}{~~~~\theletter} {\usecounter{letter}
\setlength{\labelwidth4ex}{\leftmargin6ex}}}{\end{list}}

\newcounter{rnum}
\renewcommand{\thernum}{\rom{(}\roman{rnum}\rom{)}}

\newenvironment{lnum}{\begin{list}{~~~~\thernum}{\usecounter{rnum}
\setlength{\labelwidth4ex}{\leftmargin6ex}}}{\end{list}}

%\begin{document}

%\large{Version as of April 27, 2010} 

\title{$A_{\infty}$-algebra structures associated to $\K_2$ algebras }

\keywords{$\K_2$ algebras, $A_{\infty}$-algebras, Yoneda algebra}

\author[  Conner, Goetz ]{ }
\maketitle

\begin{center}

\vskip-.2in Andrew Conner$^\dagger$ and
 Pete Goetz$^\ddagger$ \\
\bigskip
$^\dagger$ Department of Mathematics \\ University of Oregon \\ Eugene, Oregon 97403
\bigskip

$^\ddagger$ Department of Mathematics\\ Humboldt State University\\
Arcata, California  95521
\\ \ \\

\end{center}

\setcounter{page}{1}

\thispagestyle{empty}

%\vspace{0.2in}

\begin{abstract} The notion of a $\K_2$-algebra was recently introduced by Cassidy and Shelton as a generalization of the notion of a Koszul algebra. The Yoneda algebra of any connected graded algebra admits a canonical $A_{\infty}$-algebra structure. This structure is trivial if the algebra is Koszul. We study the $A_{\infty}$-structure on the Yoneda algebra of a $\K_2$-algebra. For each non-negative integer $n$ we prove the existence of a $\K_2$-algebra $B$ and a canonical $A_{\infty}$-algebra structure on the Yoneda algebra of $B$ such that the higher multiplications $m_i$ are nonzero for all $3 \leq i \leq n+3$. We also provide examples which show that the $\K_2$ property is not detected by any obvious vanishing patterns among higher multiplications.
\baselineskip15pt

\end{abstract}

\bigskip

\baselineskip18pt

%%%%%%%%%%%%%%%%%%%%%%%%%%%%%%%%%%%%%%%%%%%%%%%%%%%%%%%%%%%%%%%%%%%%%%
\section{Introduction}

Let $\N$ denote the set of non-negative integers. Let $K$ be a field. In this paper the term {\it $K$-algebra} will mean a connected, $\N$-graded, $K$-algebra. Let $A$ be a $K$-algebra. The {\it augmentation ideal} is $A_+ = \oplus_{i \ge 1} A_i$.  The term {\it $A$-module} will mean a  graded left $A$-module. The {\it trivial module} is $_AK = A/A_+$. For $A$-modules $M$ and $N$, and for $n\in\Z$, an $A$-linear map $f:M\rightarrow N$ such that $f(M_i)\subset N_{i+n}$ for all $i\in\N$ will be called a {\it degree $n$ homomorphism\/}.  The {\it Yoneda algebra of}   $A$ is the bigraded cohomology algebra $E(A) = \oplus E^{p,q}(A)=\oplus \Ext^{p, q}_A(K,K)$ (where $p$ is the cohomology degree and $-q$ is the internal degree inherited from the grading on $A$). Let $E^p(A) = \oplus_q E^{p,q}(A)$.

The notion of a $\K_2$-algebra was recently introduced by Cassidy and Shelton in \cite{CS} as a generalization of Koszul algebras. 

\begin{defn} Let $A$ be a $K$-algebra which is finitely generated in degree 1. We say $A$ is a $\K_2$-algebra if $E(A)$ is generated as an algebra by $E^1(A)$ and $E^2(A)$. 
\end{defn}

Let $A$ be a $K$-algebra. We recall that $A$ is Koszul if and only if $E(A)$ is generated as an algebra by $E^1(A)$. This forces a Koszul algebra to be a quadratic algebra. Berger \cite{B} studied a generalization of  the Koszul property for nonquadratic algebras called $N$-Koszul algebras ($N > 2$). In \cite{G}, Green, Marcos, Mart\'{i}nez-Villa, and Zhang prove: An algebra $A$ is $N$-Koszul if and only if:

\begin{enumerate}
\item $A$ has defining relations all of degree $N$, and
\item $E(A)$ is generated as an algebra by $E^1(A)$ and $E^2(A)$.
\end{enumerate}
Hence the class of $\K_2$-algebras contains the class of Koszul algebras and the class of $N$-Koszul algebras but allows for algebras with relations of different degrees. 

Cassidy and Shelton \cite{CS} prove the class of $\K_2$-algebras is closed under many standard operations in ring theory: tensor products, regular normal extensions, and graded Ore extensions.
The class of $\K_2$-algebras includes Artin-Schelter regular algebras of global dimension 4 on three linear generators as well as graded complete intersections. Cassidy and Shelton characterize $\K_2$-algebras in terms of a minimal projective resolution of the trivial module. 
We state their criterion in Section 3.

Another important recent development in ring theory is the use of $A_{\infty}$-algebras (cf. \cite{HL}, \cite{LPWZ2}, \cite{LPWZ1}). The notion of an $A_\infty$-algebra was first defined by Stasheff in \cite{St}. 

\begin{defn}  An $A_\infty$-algebra over a field $K$ is a 
 $\Z$-graded vector space $E = \oplus_{p \in \Z} E^p$
 together with graded $K$-linear maps
$$m_n: E^{\tensor n} \to E, \ \ n \geq 1$$ 
of degree $2-n$  which satisfy the Stasheff identities
$$\mathbf{SI}(n)\qquad \sum (-1)^{r+st} m_u(1^{\tensor r} \tensor m_s \tensor 1^{\tensor t}) = 0.$$
where the sum runs over all decompositions $n = r+s+t$ such that $r, t \geq 0$, $s \geq 1$, and $u = r + 1 + t$. 

\end{defn}

The linear maps $m_n$ for $n\ge 3$ are called \emph{higher multiplications.}
The $A_{\infty}$-structure $\{m_i\}$ on a graded $K$-vector space $E$ determines a $K$-linear map $\oplus m_i: T(E)_+\rightarrow E$. This map induces the structure of a differential coalgebra on $T(E)$ (see \cite{K}).

We note that the identity {\bf SI$(1)$} is $m_1 m_1 = 0$ and the degree of $m_1$ is 1 so $m_1$ is a differential on $E$. The identity {\bf SI$(2)$} can be written as
$$m_1m_2 = m_2(m_1 \tensor 1 + 1 \tensor m_1).$$ 
Thus the differential $m_1$ is a graded derivation with respect to $m_2$. Hence a differential graded algebra is an $A_{\infty}$-algebra with $m_n = 0$ for all $n \geq 3$. The map $m_2$ plays the role of multiplication in an $A_{\infty}$-algebra, but in general $m_2$ is not associative. However, it is clear from {\bf SI$(3)$} that $m_2$ is associative if $m_1$ or $m_3$ is zero.

An important result in the theory of $A_{\infty}$-algebras is the following theorem due to Kadeishvili.

\begin{thm}[\cite{K}] 
\label{Kadeishvili}
Let $(E, \{m_i^E\})$ be an $A_{\infty}$-algebra. The cohomology $H^* E$ with respect to $m^E_1$ admits an $A_{\infty}$-algebra structure $\{m_i\}$ such that
\begin{enumerate}
\item $m_1 = 0$ and  $m_2$ is induced by $m_2^E$, and
\item there is a quasi-isomorphism of $A_{\infty}$-algebras $H^* E \to E$ lifting the identity of $H^* E$.
\end{enumerate}
Moreover, this structure is unique up to (non unique) isomorphism of $A_{\infty}$-algebras.
\end{thm}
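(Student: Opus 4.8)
The plan is to transfer the $A_\infty$-structure from $E$ to $H^*E$ along a chosen contraction, following Kadeishvili's inductive method. Write $H = H^*E$ for the cohomology of $(E, m_1^E)$. Since we work over the field $K$, the differential $m_1^E$ admits vector-space splittings, so I would first choose complements to $\ker m_1^E$ in $E$ and to $\im m_1^E$ in $\ker m_1^E$ to obtain graded linear maps $i \colon H \to E$ and $\pi \colon E \to H$ with $\pi i = 1_H$, together with a degree $-1$ homotopy $G \colon E \to E$ satisfying $i\pi - 1_E = m_1^E G + G m_1^E$. One can further arrange the side conditions $\pi G = 0$, $G i = 0$, and $G G = 0$. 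Then set $m_1 = 0$ on $H$ and take $f_1 = i$ as the first component of the quasi-isomorphism $f \colon H \to E$ to be built; here $f_1$ is a cochain map inducing the identity on $H$ precisely because $m_1^E i = 0$.

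Next I would construct the higher multiplications $m_n \colon H^{\tensor n} \to H$ and the morphism components $f_n \colon H^{\tensor n} \to E$ simultaneously by induction on $n \geq 2$. For $n = 2$, set $m_2 = \pi \circ m_2^E \circ (i \tensor i)$, which is exactly the multiplication induced by $m_2^E$ on cohomology, as required by (1). Assuming $m_j$ and $f_j$ have been defined for all $j < n$ so that the identities $\mathbf{SI}(j)$ for $\{m_j\}$ and the $A_\infty$-morphism identities for $\{f_j\}$ hold, I would collect into a single map $U_n \colon H^{\tensor n} \to E$ all the terms of the order-$n$ morphism identity that involve only the already-constructed data (that is, every term except $m_1^E f_n$ and $f_1 m_n$). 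The definitions
$$m_n = \pi \circ U_n \qquad\text{and}\qquad f_n = -\,G \circ U_n$$
then force the order-$n$ morphism identity to hold: using $i\pi - 1_E = m_1^E G + Gm_1^E$ together with $m_1^E U_n = 0$ and the side conditions, one computes $i m_n - U_n = m_1^E G U_n = -m_1^E f_n$, which is the desired identity up to the prescribed signs.

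The crux of the induction — and the step I expect to be the main obstacle — is verifying that $U_n$ takes values in cocycles, namely $m_1^E U_n = 0$; only then is $m_n = \pi U_n$ a well-defined map into $H$ and $f_n = -GU_n$ the correct homotopy. This is a sign-bookkeeping computation: applying $m_1^E$ to $U_n$ and substituting the Stasheff identities $\mathbf{SI}(s)$ for $m^E$ together with the morphism identities at orders $j < n$ should make all terms cancel in pairs, but one must track the signs $(-1)^{r+st}$ from the $A_\infty$ axioms with care. Granting this, the Stasheff identity $\mathbf{SI}(n)$ for $\{m_j\}$ follows by applying the injection $i$ and comparing with the identity already valid in $E$, and $f$ is a quasi-isomorphism because $f_1 = i$ induces the identity on $H$. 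This would establish existence and conclusions (1) and (2).

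For uniqueness, suppose $(H, \{m_n\})$ and $(H, \{m_n'\})$ are two transferred structures, coming from contractions $(i,\pi,G)$ and $(i',\pi',G')$ with quasi-isomorphisms $f, f' \colon H \to E$. I would construct an $A_\infty$-isomorphism $\phi \colon (H,\{m_n\}) \to (H,\{m_n'\})$ by an entirely analogous induction: set $\phi_1 = 1_H$ and solve the order-$n$ morphism identities for $\phi_n$ using the homotopy $G'$, the obstruction at each stage again being a cocycle by the lower identities. Equivalently, one may invoke the general fact that an $A_\infty$-quasi-isomorphism admits an $A_\infty$-quasi-inverse and compose $f'$ with a quasi-inverse of $f$. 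Because both the quasi-inverse and the composite depend on the chosen homotopies, the resulting isomorphism is pinned down only up to $A_\infty$-homotopy, which accounts for the parenthetical ``non unique'' in the statement.
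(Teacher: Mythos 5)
Your argument is correct in its essentials, but note that the paper never proves this theorem: it is imported wholesale from \cite{K}, so there is no in-paper proof to match it against. What you have reconstructed is Kadeishvili's original obstruction-theoretic induction, whereas the construction the paper actually runs (Section 4) is Merkulov's: having fixed SDR data $i$, $p=\phi$, $G$ satisfying the side conditions $G^2=0$, $Gi=0$, $pG=0$, it defines $\lambda_n=\lambda_2\sum_{s+t=n,\,s,t\ge 1}(-1)^{s+1}(G\lambda_s\otimes G\lambda_t)$ and sets $m_n=p\lambda_n i$ (Theorem \ref{Merkulov}), the Stasheff identities having been verified once and for all in Merkulov's paper. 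The practical difference is this: your induction must establish at each stage that the obstruction $U_n$ is an $m_1^E$-cocycle before $m_n=\pi U_n$ and $f_n=-GU_n$ are legitimate --- exactly the step you defer --- while Merkulov's closed recursion requires no stage-by-stage cocycle check and, more to the point for this paper, produces formulas one can actually evaluate on a minimal resolution, which is what Section 5 exploits. Conversely, your route delivers the final uniqueness clause, which Merkulov's theorem as quoted does not address: the inductive construction of $\phi$ with $\phi_1=1_H$ (or composing $f'$ with a quasi-inverse of $f$, noting that a quasi-isomorphism between minimal $A_\infty$-structures has invertible $\phi_1$ and is therefore an isomorphism) is the standard completion, and the homotopy-dependence of that construction is precisely the ``non unique'' in the statement. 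The deferred verification $m_1^E U_n=0$ is genuinely the heart of Kadeishvili's lemma, but it does go through by the cancellation you describe --- substituting $\mathbf{SI}(s)$ for $m^E$ and the morphism identities below order $n$ --- so I would call it an omitted computation rather than a gap; if you write it out, beware that your convention $i\pi-1_E=m_1^E G+Gm_1^E$ carries the opposite sign from the paper's $1_U-ip=\partial G+G\partial$, so the signs in $f_n=-GU_n$ must be adjusted consistently.
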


We refer the interested reader to \cite{Keller2} for definitions of  \emph{morphism}, \emph{quasi-isomorphism}, and \emph{isomorphism} of $A_{\infty}$-algebras. We will not use these notions in this paper. We call an $A_{\infty}$-algebra structure on $H^*E$ \emph{canonical} if it belongs to the $A_{\infty}$-isomorphism class of structures provided by the theorem.

Let $A$ be a $K$-algebra. We recall that the Yoneda algebra of $A$, $E(A)$, is the cohomology algebra of a differential graded algebra (see Section 4). Therefore Kadeishvili's theorem implies that $E(A)$ admits a canonical $A_{\infty}$-algebra structure. We remark that $E(A)$ is bigraded. The grading on the maps $m_n$ refers to the cohomological degree. By the construction described in Section 4, we may assume the internal degree of $m_n$ is 0 for all $n$. 

The following theorem was the main motivation for this paper.

\begin{thm} Let $A$ be a $K$-algebra which is finitely generated in degree 1. Let $E(A)$  be the Yoneda algebra of $A$. Then $A$ is a Koszul algebra if and only if every canonical $A_{\infty}$-algebra structure on $E(A)$  has $m_n = 0$ for all $n \ne 2$.
\end{thm}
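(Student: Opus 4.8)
The plan is to prove both directions by carefully tracking how higher multiplications on $E(A)$ interact with the generation of $E(A)$ as an algebra under the product $m_2$. The conceptual anchor is the characterization of Koszulity recalled just above the definition: $A$ is Koszul if and only if $E(A)$ is generated as an algebra by $E^1(A)$. Since the canonical structure always has $m_1 = 0$, the map $m_2$ is genuinely associative by the remark following $\mathbf{SI}(3)$, so ``generated by $E^1(A)$'' refers to an honest associative product. The strategy is to relate the $A_\infty$-generation (generation using all the $m_n$) to ordinary $m_2$-generation, using the internal bigrading as the bookkeeping device that forces most higher multiplications to vanish on low-degree pieces.

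The easy direction is ($A$ Koszul $\Rightarrow$ all $m_n = 0$ for $n \neq 2$). First I would record the bigraded constraints. An element of $E^{p,q}(A)$ has cohomological degree $p$ and internal degree $q$; the map $m_n$ has cohomological degree $2-n$ and, by the normalization stated in the excerpt, internal degree $0$. Koszulity gives the sharp diagonal purity $E^{p,q}(A) = 0$ unless $q = p$, i.e. $E(A)$ is concentrated on the diagonal. Now feed the diagonal-purity into the degree count for $m_n$: applied to a tensor $x_1 \tensor \cdots \tensor x_n$ with $x_j \in E^{p_j, p_j}(A)$, the output lands in cohomological degree $\sum p_j + 2 - n$ and internal degree $\sum p_j$. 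For the output to be nonzero it must lie on the diagonal, forcing $\sum p_j + 2 - n = \sum p_j$, i.e. $n = 2$. Since each $p_j \geq 1$ (the $m_n$ are defined on $E_+ = T(E)_+$ and $E^{0}(A) = K$ acts as a unit that the reduced higher multiplications kill), this shows $m_n = 0$ for all $n \neq 2$ on every canonical structure, which is exactly the claim. This direction is essentially a bigraded degree count and should be short.

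The converse ($m_n = 0$ for all $n \neq 2$ $\Rightarrow$ $A$ Koszul) is where the real content lies, and it is the step I expect to be the main obstacle. Here I would argue contrapositively: assume $A$ is not Koszul and produce a canonical structure with some $m_n \neq 0$, $n \neq 2$. The tool is the stronger statement that the higher multiplications on $E(A)$ reconstruct the algebra $A$ up to the relations; concretely, there is a result (the $A_\infty$ analogue of the bar/cobar reconstruction, as in the Lu--Palmieri--Wu--Zhang circle of ideas in \cite{LPWZ1}) saying that the space of minimal generators and all the defining relations of $A$ — graded by degree — are encoded in the Massey-product-like higher multiplications restricted to $E^1(A)$. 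If $A$ is finitely generated in degree $1$ but not Koszul, then either $A$ is not quadratic (it has a defining relation in some degree $N \geq 3$) or $A$ is quadratic but $E(A)$ is not generated by $E^1(A)$. In the first case, a relation of degree $N$ produces a nonzero component of $m_N$ on $E^1(A)^{\tensor N}$ landing in $E^2(A)$, detected by the internal bigrading: a degree-$N$ relation creates an element of $E^{2,N}(A)$ off the diagonal, and the defining formula for $m_N$ (which raises internal degree to match) must be nonzero to account for it. In the second case one climbs one cohomological step higher to find the first $m_n$ obstructing $m_2$-decomposability. The delicate points are (1) invoking the reconstruction in a form valid for the non-quadratic, non-$N$-pure setting allowed by $\K_2$-type algebras, and (2) checking the conclusion is independent of the choice within the $A_\infty$-isomorphism class — since isomorphisms of $A_\infty$-algebras can shuffle higher multiplications, I would need the bigrading to pin down that the \emph{lowest} nonvanishing higher multiplication cannot be removed by any $A_\infty$-isomorphism fixing $m_2$, which again follows because such isomorphisms preserve internal bidegree and are the identity in the lowest relevant bidegree.
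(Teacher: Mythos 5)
Your forward direction is correct and is precisely the paper's argument: Koszulity forces $E(A)=\oplus_p E^{p,p}(A)$, and since each $m_n$ has cohomological degree $2-n$ and internal degree $0$, a nonzero value of $m_n$ on inputs from $E^{p_j,p_j}(A)$ would have to land in $E^{\sum p_j+2-n,\,\sum p_j}(A)$, forcing $n=2$. (Your aside about excluding $E^0$ and ``reduced'' multiplications is unnecessary: the bidegree count as you ran it forces $n=2$ even if some input lies in $E^{0,0}$.) Note also that your worry (2) is superfluous: the contrapositive only requires exhibiting \emph{one} canonical structure with a nonzero higher multiplication, since the hypothesis quantifies over every canonical structure; no invariance under $A_\infty$-isomorphism is needed.

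The genuine gap is in your second case, $A$ quadratic but not Koszul. The reconstruction result you invoke (and which the paper itself uses in Section 6, citing \cite{LPWZ2} and \cite{Keller1}) only controls the restrictions $m_n|_{E^1(A)^{\tensor n}}\colon E^1(A)^{\tensor n}\to E^{2,n}(A)$, which are dual to the inclusions $(I/I')_n\hookrightarrow V^{\tensor n}$ of essential relations. This handles your first case (an essential relation of degree $N\ge 3$ gives $m_N\neq 0$), but when $A$ is quadratic, $(I/I')_n=0$ for all $n\ge 3$, so all of these restrictions vanish identically and the tool detects nothing. For such $A$ one has $E^1=E^{1,1}$ and $E^2=E^{2,2}$, and the failure of Koszulity first appears in some $E^{p,q}$ with $q>p\ge 3$; the higher multiplications that could witness it have \emph{mixed} inputs (e.g.\ $m_3$ on $E^{1,1}\tensor E^{1,1}\tensor E^{2,2}$ landing in $E^{3,4}$), which lie entirely outside the scope of the relations-reconstruction statement. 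Your sentence ``one climbs one cohomological step higher to find the first $m_n$ obstructing $m_2$-decomposability'' is the assertion to be proved, not an argument: nothing you have established rules out a canonical structure with all $m_n=0$, $n\neq 2$, on a quadratic non-Koszul algebra. What closes this hole is exactly the statement the paper leans on for the converse: $E(A)$ is $A_\infty$-generated by $E^1(A)$ (attributed to Keller \cite{Keller1}, stated there without proof, with a constructive proof deferred to the first author's thesis; alternatively the converse follows from Gugenheim--May \cite{GugMay1} and Stasheff \cite{Stasheff1}). With that lemma no case division is needed at all: if some canonical structure has $m_n=0$ for all $n\neq 2$, then $A_\infty$-generation by $E^1(A)$ collapses to ordinary $m_2$-generation by $E^1(A)$, which is Koszulity. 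As written, your proof establishes the converse only for non-quadratic $A$.
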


One direction of this theorem is clear. Suppose that $A$ is a Koszul algebra. Then the Yoneda algebra is concentrated on the diagonal $\oplus E^{p,p}(A)$. It follows, purely for degree reasons, that all multiplications other than $m_2$ are zero.

The converse is implied by work of May and Gugenheim \cite{GugMay1} and Stasheff \cite{Stasheff1}. It can also be proved using the fact that $E(A)$ is $A_{\infty}$-generated by $E^1(A)$. This fact appears in Keller's paper \cite{Keller1} without proof. A constructive proof that $E(A)$ is $A_{\infty}$-generated by $E^1(A)$ will appear in the first author's Ph.D. thesis using the ideas contained in Section 4.

The goal of this paper is to provide some partial answers to the following natural questions.

\begin{itemize}
\item What restrictions does the $\K_2$ condition place on a canonical $A_{\infty}$-structure on the Yoneda algebra?
\item Do certain $A_{\infty}$-structures on the Yoneda algebra guarantee the original algebra is $\K_2$?
\end{itemize}
The analogues of these questions for $N$-Koszul algebras with $N\ge 3$ were considered by He and Lu in \cite{HL}, where they obtain a result similar to Theorem 1.4.  They prove an algebra $A$ is $N$-Koszul if and only if $E(A)$ is a reduced $(2, N)$ $A_{\infty}$-algebra and is $A_{\infty}$- generated by $E^1$. In a $(2, N)$ $A_{\infty}$-algebra, all multiplications other than $m_2$ and $m_N$ are zero. Their result is aided by the fact that for an $N$-Koszul algebra, $E^p(A)$ is concentrated in a single internal degree. The Yoneda algebra of a $\K_2$-algebra does not generally satisfy such a strong purity condition. 

Our main results are the following.

\begin{thm} 
\label{MainThm1}
For each $n \in \N$ there exists a $\K_2$-algebra $B$ such that 
\begin{enumerate}
\item The defining relations of $B$ are quadratic and cubic, and 
\item $E(B)$ has a canonical $A_{\infty}$-algebra structure such that $m_{i}$ is nonzero for all $3 \leq i \leq n+3$.
\end{enumerate}
\end{thm}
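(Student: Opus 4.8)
The plan is to exhibit, for each $n \in \N$, an explicit algebra $B = B_n$ presented by finitely many degree-1 generators together with relations that are homogeneous of degree $2$ and $3$, and to arrange these relations so that they overlap in a controlled ``staircase'' pattern. The overlaps are chosen so that the minimal graded projective resolution of $_BK$ contains a long chain of syzygies whose formation requires, at each successive stage, exactly one new multiplication by a generator. First I would construct this resolution explicitly (or at least its part in low cohomological degree), read off the bigraded Yoneda algebra $E(B)$, and single out the distinguished basis vectors: the classes $\e_1, \e_2, \dots \in E^1(B) = E^{1,1}(B)$ dual to the generators, and the classes in $E^2(B)$ dual to the quadratic and cubic relations, which live in $E^{2,2}(B)$ and $E^{2,3}(B)$ respectively.

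Next I would verify that $B$ is a $\K_2$-algebra using the Cassidy--Shelton criterion recalled in Section 3; concretely this reduces to checking that $E(B)$ is generated as an algebra by $E^1(B)$ and $E^2(B)$, and the staircase presentation is to be designed precisely so that this generation holds while still producing nontrivial higher structure. Here the bigrading is the main bookkeeping tool. An internal-degree count shows that $m_i$ applied to $i$ elements of $E^1(B)$ lands in $E^{2,i}(B)$, which vanishes for $i \geq 4$ since $B$ has no relations of degree $\geq 4$; thus for $i=3$ the cubic relations already force $m_3 \neq 0$ (its value is a class in $E^{2,3}(B)$), whereas for $i \geq 4$ any nonzero $m_i$ must take mixed inputs. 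The cheapest nonzero possibility, and the one I would target, is $m_i(\rho \tensor \e \tensor \cdots \tensor \e)$ with one factor $\rho \in E^2(B)$ and $i-1$ factors in $E^1(B)$, whose output lies in $E^3(B)$. The algebra $B_n$ is to be built so that the relevant class in $E^3(B)$ is genuinely detected only at length $i$, for each $i$ with $4 \le i \le n+3$.

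To compute the canonical $A_\infty$-structure (whose existence is guaranteed by Kadeishvili's theorem, Theorem~\ref{Kadeishvili}) I would use the differential graded algebra model of Section 4 together with a standard homotopy-transfer (Merkulov-type) recursion: choose a contraction of that DG algebra onto its cohomology $E(B)$, given by a projection $p$, an inclusion $\iota$, and a homotopy $h$, set $m_2 = p\,\mu\,(\iota \tensor \iota)$, and compute the $m_i$ recursively; equivalently, I would identify $m_i(\xi_1,\dots,\xi_i)$ with the $i$-fold Massey product $\la \xi_1,\dots,\xi_i\ra$ on the distinguished classes. The crux is to show that, for the chosen inputs, all the intermediate products that are forced to vanish do vanish (so the length-$i$ operation is defined), and that the terminal value is a nonzero element of $E^3(B)$ not absorbed by any indeterminacy. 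The staircase design is meant to make each intermediate cancellation transparent and each terminal value a single basis vector, which is what forces nonvanishing; since this must hold simultaneously for every $i$ in the range within one canonical structure, I would carry out the recursion uniformly in $i$.

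The main obstacle I anticipate is exactly this uniform nonvanishing. The transfer recursion mixes contributions from all shorter operations, so a priori the length-$i$ term could cancel, and the Massey-product indeterminacy could swallow it. Controlling this requires a presentation rigid enough that the homotopy $h$ can be chosen explicitly and compatibly across all lengths, that the sign-weighted sums in the recursion collapse to a single surviving term at each stage, and that the $\K_2$ property survives the extra relations needed to lengthen the chain. Balancing the rigidity demanded by the computation against the generation condition demanded by $\K_2$ is the delicate point, and the explicit family $B_n$ is to be engineered to thread this needle.
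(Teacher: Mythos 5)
Your scaffolding matches the paper's in outline (an explicit family with quadratic and cubic relations, an explicit minimal resolution of $_BK$, the Cassidy--Shelton criterion, Merkulov-style homotopy transfer), but the specific nonvanishing target you commit to is impossible, and this is a genuine gap rather than a detail. You propose to realize $m_i \neq 0$ for $4 \le i \le n+3$ on inputs $\rho \tensor \xi \tensor \cdots \tensor \xi$ with $\rho \in E^2(B)$ and $i-1$ factors $\xi \in E^{1,1}(B)$, the output lying in $E^3(B)$. Since the canonical structure maps may be taken to have internal degree $0$, that output lies in $E^{3,q}$ with $q = \deg(\rho) + (i-1) \geq 2 + 3 = 5$ once $i \geq 4$. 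But for any $\K_2$-algebra whose relations have degrees $2$ and $3$, one has $E^1 = E^{1,1}$ and $E^2 = E^{2,2} \oplus E^{2,3}$, and generation of $E(B)$ by $E^1$ and $E^2$ forces $E^3$ to be spanned by products of type $E^1E^2$, $E^2E^1$, $E^1E^1E^1$, whose internal degrees are at most $4$. Hence $E^{3,q} = 0$ for all $q \geq 5$, and your ``cheapest possibility'' $m_i(\rho \tensor \xi \tensor \cdots \tensor \xi)$ vanishes identically for every $i \geq 4$, for every canonical structure and every algebra in your class. The same internal-degree bookkeeping you correctly use to rule out all-$E^1$ inputs also rules out your chosen mixed inputs; the $\K_2$ hypothesis itself is what closes that door.

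This is precisely why the paper's computation takes a different shape: the nonvanishing $m_{j+3}$ is evaluated on inputs of \emph{unbounded cohomological degree} --- $\a_j \in E^1$, the $j$ Yoneda products $\b_\ell\a_{\ell-1} \in E^2$, the class $\b_0 \in E^1$, and the long product $\g_0\cdots\g_n \in E^{n+1}$ --- so that the output is a dual basis vector deep in $E^{j+n+2}$, where the generation constraints no longer force vanishing for degree reasons. Any correct repair of your plan must similarly feed in classes of growing cohomological degree, which in turn requires knowing the resolution far out, not just in low homological degree. Two smaller points: your worry about Massey-product indeterminacy is moot, since the theorem asks only for the existence of one canonical structure, which the paper obtains from explicit choices of the inclusion $i$ and homotopy $G$ with the side conditions of Section 4 (no indeterminacy enters); and leaving the family $B_n$ ``to be engineered'' leaves the theorem unproved, since the construction and its verification --- the monomial basis via the Diamond Lemma, the star-product resolution of Lemmas \ref{PreAi}--\ref{ResolutionBi}, the rank check on $[L_d : E_d]$, and the explicit $G\l_{\ell+2}$ recursion of Section 5 --- constitute essentially all of the paper's proof.
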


This shows, in contrast to the cases of Koszul and $N$-Koszul algebras, that vanishing of higher multiplications on the Yoneda algebra of a $\K_2$-algebra need not be determined in any obvious way by the degrees of defining relations. Recently, Green and Marcos \cite{Green} defined the notion of $2$-$d$-\emph{Koszul} algebras. We remark that the algebra $B$ of Theorem \ref{MainThm1} is a $2$-$d$-Koszul algebra.

\begin{thm}
\label{MainThm2}
There exist $K$-algebras $A^1$ and $A^2$  such that
\begin{enumerate}
\item $A^1$ is $\K_2$ and $A^2$ is not $\K_2$,
\item $A^1$ and $A^2$ have the same Poincar\'{e} series, and
\item $E(A^1)$ and $E(A^2)$ admit canonical $A_{\infty}$-structures which are not distinguished by $m_n$ for $n\ge 3$.
\end{enumerate}
\end{thm}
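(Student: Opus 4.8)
The plan is to prove Theorem \ref{MainThm2} by exhibiting an explicit pair of algebras. First I would construct $A^1$ and $A^2$ as quotients of a free algebra by homogeneous ideals chosen so that the two algebras are ``numerical twins'': their defining relations occur in the same degrees and in the same numbers, but are arranged so that exactly one of the two resulting Yoneda algebras is generated, under the ordinary product $m_2$, by $E^1$ and $E^2$. The guiding principle is that failure of the $\K_2$ property is a statement purely about $m_2$ on $E$ --- namely that some $E^p$ with $p\ge 3$ contains a class outside the subalgebra generated by $E^1$ and $E^2$ --- whereas the quantities appearing in (2) and (3) are insensitive to this. So I would look for a small perturbation of the relations of a $\K_2$ algebra that destroys ordinary generation in high cohomological degree while preserving the bigraded dimensions of $\Ext^{p,q}_A(K,K)$.

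Next I would compute, for each $A^i$, the minimal graded projective resolution of the trivial module $K$. This yields both the Poincar\'e series and the full bigraded structure of $E(A^i)$; matching the resolutions degree by degree establishes claim (2). To settle claim (1), I would invoke the Cassidy--Shelton criterion recalled in Section 3: for $A^1$ I would verify that every minimal generator of the resolution in homological degree $\ge 3$ is accounted for by the $\K_2$ condition, while for $A^2$ I would exhibit an explicit class in some $E^{p}(A^2)$, $p\ge 3$, that cannot be written as an $m_2$-product of classes of lower cohomological degree.

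The heart of the argument is claim (3). Since the Yoneda algebra of any $K$-algebra is $A_{\infty}$-generated by $E^1$ (the fact recalled after Theorem 1.4), the failure of ordinary $m_2$-generation that distinguishes $A^2$ from $A^1$ is exactly the kind of information the higher multiplications cannot see: the extra generator witnessing non-$\K_2$-ness of $A^2$ must itself arise as an $m_n$-product of classes in $E^1$. Concretely, using the differential graded algebra whose cohomology is $E(A^i)$ (Section 4) together with Kadeishvili's theorem (Theorem \ref{Kadeishvili}), I would transfer the DG structure to a canonical $A_{\infty}$-structure on each $E(A^i)$ via an explicit choice of contracting homotopy, computing the maps $m_n$ by the standard recursive formulas. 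The aim is to verify that the resulting nonvanishing patterns of $m_n$ for $n\ge 3$ coincide for the two algebras, so that no higher multiplication distinguishes $A^1$ from $A^2$.

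I expect the main obstacle to be the simultaneous control of the two requirements: forcing the higher-multiplication patterns to coincide while keeping one algebra $\K_2$ and the other not. Transferred $A_{\infty}$-structures are canonical only up to (non-unique) $A_{\infty}$-isomorphism, so ``not distinguished by $m_n$'' must be interpreted with care and the chosen contractions must be arranged to produce genuinely matching structures. Checking the relevant Stasheff identities \textbf{SI}$(n)$ and confirming that no $m_n$ detects the extra generator in $E(A^2)$ is where the real work lies; keeping the examples small enough that these computations are feasible yet rich enough to realize the required behavior is the delicate balance.
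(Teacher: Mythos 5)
Your proposal takes essentially the same route as the paper: the paper exhibits explicit ``numerical twin'' monomial algebras $A^1=T(V)/\langle y^2zx,\,zx^2,\,y^2w\rangle$ and $A^2=T(V)/\langle y^2z,\,zx^2,\,y^2w^2\rangle$ whose minimal resolutions of $_AK$ have identical bigraded shape (giving (2)), applies the Cassidy--Shelton criterion to get (1) (with $y^2zx$ essential for $A^1$ and all Yoneda products vanishing on $E(A^2)$), and then computes the canonical transferred $A_\infty$-structures via an explicit Merkulov contraction, finding that both Yoneda algebras carry nonzero $m_3$ and $m_4$ and nothing in higher arity, so only $m_2$ separates them---exactly your plan of matching resolutions, invoking the criterion, and verifying coinciding nonvanishing patterns for $m_n$, $n\ge 3$, by explicit homotopy computation. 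Your guiding heuristic that the witness to non-$\K_2$-ness ``must be invisible'' to higher multiplications is not itself an argument (the paper's point is precisely that this must be checked by computation), but since you defer to the explicit verification, the approach is the same.
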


These examples demonstrate that obvious patterns of vanishing among higher multiplications cannot detect the $\K_2$ property. 

We are now ready to outline the paper.
In Section 2 we introduce the $K$-algebra $B$ and prove some basic facts about a canonical graded $K$-basis for $B$. Section 3 is the technical heart of the paper. We describe a minimal projective resolution of the trivial $B$-module and prove that $B$ is a $\K_2$-algebra. In Section 4 we prove some general results on using a minimal resolution of the trivial module to compute $A_{\infty}$-algebra structures on Yoneda algebras. In Section 5 we compute part of an $A_{\infty}$-algebra structure on $E(B)$ and finish the proof of Theorem \ref{MainThm1}. 
Finally in Section 6 we prove Theorem \ref{MainThm2}.

We thank Brad Shelton for several helpful conversations and for reading a preliminary draft of this paper. 

\section{The Algebra $B$}

We begin this section by defining an algebra $B$. We exhibit a canonical $K$-basis for $B$ and use it to compute left annihilator ideals of certain elements of $B$.

Let $K$ be a field and fix $n \in \N$. Let $K^*$ denote the set of nonzero elements of $K$. Let $V$ be a $K$-vector space with basis $X=\{ a_i, b_i, c_i\}_{0\le i\le n}$. It will be convenient to define elements $b_s$ of $V$ to be 0 for $s>n$. Likewise we define $c_t$ to be $0$ in $V$ for $t<0$. Let $T(V)$ be the tensor algebra on $V$, graded by tensor degree. Let $R\subset V^{\otimes 2}\oplus V^{\otimes 3}$ be the set of tensors 
$$
\{a_nb_nc_n, c_0a_0\}\cup 
\{a_ib_ic_i + c_{i+1}a_{i+1}b_{i+1},  b_{i+1}c_{i+1}a_{i+1}, c_ic_{i+1}, b_{i+1}a_i \}_{0 \leq i < n}.
$$ 
We define $B$ to be the $K$-algebra $T(V)/I$ where $I$ is the ideal of $T(V)$ generated by $R$. The ideal $I$ is homogeneous, so $B$ inherits a grading from $T(V)$. We have the canonical quotient map $\pi_B: T(V)\rightarrow B$ of graded $K$-algebras. 

The basis $X$ generates a free semigroup $\la X\ra$ with 1 under the multiplication in $T(V)$. We call elements of $\la X\ra$ \emph{pure tensors}. Let $\la X \ra_d$ denote the set of pure tensors of degree $d$.
We call an element $x\in B_+$ a \emph{monomial} if $\pi_B^{-1}(x)$ contains a pure tensor.

We need a monomial $K$-basis for $B$. Let $R'\subset \la X\ra$ be the set
$$
\{a_nb_nc_n,c_0a_0\}\cup
\{c_{i+1}a_{i+1}b_{i+1},  b_{i+1}c_{i+1}a_{i+1},  c_ic_{i+1}, b_{i+1}a_i\}_{0\le i<n}
$$
and
$$
\la R'\ra = \{ AWB\ |\ A, B\in \la X\ra, W\in R'\}.
$$
The following proposition is a straightforward application of Bergman's Diamond Lemma \cite{Berg}. We order pure tensors by degree-lexicographic order with the basis elements ordered 
$a_i<b_i<c_i<a_{i+1}<b_{i+1}<c_{i+1}$ for $0\le i\le n-1$.

\begin{prop}
\label{basis}
The image under $\pi_B$ of $\la X\ra-\la R'\ra$, the set of tensors  
which do not contain any element of $R'$, gives a monomial $K$-basis for $B$.
\end{prop}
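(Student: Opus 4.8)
The plan is to verify the hypotheses of Bergman's Diamond Lemma for the reduction system determined by $R$ together with the stated degree-lexicographic order, and then to read off the claimed basis as the set of irreducible monomials. First I would record the reduction system explicitly. Since the order puts $a_i<b_i<c_i<a_{i+1}<b_{i+1}<c_{i+1}$, in each generator of $I$ the leading pure tensor is precisely the corresponding element of $R'$: the monomial relations $a_nb_nc_n$, $c_0a_0$, $b_{i+1}c_{i+1}a_{i+1}$, $c_ic_{i+1}$, $b_{i+1}a_i$ reduce to $0$, while the lone binomial relation yields the reduction $c_{i+1}a_{i+1}b_{i+1}\mapsto -a_ib_ic_i$ (because $a_i<c_{i+1}$). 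Hence $R'$ is exactly the set of leading monomials and the irreducible tensors are exactly those in $\la X\ra-\la R'\ra$. Because degree-lexicographic order is a semigroup partial order satisfying the descending chain condition, and because every monomial on the right-hand side of a reduction is strictly smaller than its left-hand side (the only nontrivial check being $a_ib_ic_i<c_{i+1}a_{i+1}b_{i+1}$), the system is compatible with the order and all reductions terminate.

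Next I would dispose of the inclusion ambiguities. Since each leading word has degree $2$ or $3$, it suffices to observe that no degree-$2$ leading word ($c_0a_0$, $c_ic_{i+1}$, $b_{i+1}a_i$) occurs as a subword of a degree-$3$ leading word, which is immediate from the letter patterns at the word boundaries.

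The heart of the argument, and the step I expect to be the main obstacle, is the enumeration and resolution of the overlap ambiguities. I would organize these by the shared letter(s). The length-one overlaps arise when a word ending in $a$, $b$, or $c$ meets a word beginning with the same letter (for instance $c_ja_jb_j\cdot b_jc_ja_j$ overlapping in $b_j$, or $c_ic_{i+1}\cdot c_{i+1}a_{i+1}b_{i+1}$ overlapping in $c_{i+1}$), while the length-two overlaps come from pairs of degree-$3$ words (for instance $a_nb_nc_n$ and $b_nc_na_n$ overlapping in $b_nc_n$). In each case I would reduce the ambiguity along both paths and check that the results agree. The delicate feature is that whenever the binomial reduction $c_{j}a_jb_j\mapsto -a_{j-1}b_{j-1}c_{j-1}$ is applied inside a larger word it manufactures new reducible subwords, so a single reduction does not settle the overlap; one must continue reducing. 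The point is that the newly exposed subwords are again leading monomials of $R'$ (typically of type $c_ic_{i+1}$, $b_{i+1}a_i$, or $b_{i+1}c_{i+1}a_{i+1}$, and occasionally a lower $c_ia_ib_i$ triggering a further step), so both reduction paths ultimately terminate at $0$ and therefore coincide.

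Finally, the boundary indices ($j=1$ and $j=n$, as well as the degenerate case $n=0$) have to be treated separately, but these are exactly the situations the special relations $a_nb_nc_n$ and $c_0a_0$ are designed to absorb: when the chain of reductions reaches an extreme index, one of these two monomial relations takes over and again drives the result to $0$. Once every overlap ambiguity has been shown resolvable, the Diamond Lemma applies and the images under $\pi_B$ of the irreducible tensors $\la X\ra-\la R'\ra$ form a $K$-basis of $B$, as asserted.
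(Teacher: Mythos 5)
Your proposal is correct and takes essentially the same route as the paper: the paper's entire proof of this proposition is the one-line invocation of Bergman's Diamond Lemma with precisely the degree-lexicographic order you use, so your identification of the leading words with $R'$ (in particular $c_{i+1}a_{i+1}b_{i+1}\mapsto -a_ib_ic_i$ since $a_i<c_{i+1}$), the termination and inclusion-ambiguity checks, and the overlap analysis---including the key observation that the binomial reduction creates new reducible subwords ($c_{i-1}c_i$, $b_ja_{j-1}$, $b_jc_ja_j$, or a lower $c_ia_ib_i$) whose cascades are absorbed at the extreme indices by $a_nb_nc_n$ and $c_0a_0$---constitute exactly the verification the authors leave as ``straightforward.'' All overlap ambiguities do resolve to zero as you predict, so your write-up is in fact more detailed than the paper's own argument.
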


\begin{rmk}
In Section 5 of \cite{CS}, Cassidy and Shelton give a simple algorithm for determining if a monomial algebra is $\K_2$. The algorithm can be used to show the algebra $T(V)/\la R'\ra$ is $\K_2$. Proposition \ref{basis} implies that $R$ is an \emph{essential Gr\"{o}bner basis} for $I$ (see \cite{Phan}). By Theorem 3.13 in \cite{Phan}, the fact that $T(V)/\la R'\ra$ is $\K_2$ implies $B$ is $\K_2$. Though this method of proving $B$ is $\K_2$ is quite easy, we do not present the details here. A minimal resolution of the trivial module is central to our calculation of $A_{\infty}$-structure, so we prove that $B$ is $\K_2$ using the matrix criterion of Theorem \ref{K2criterion}.
\end{rmk}

The basis provided by Proposition \ref{basis} determines a vector space splitting $\rho_B:B\rightarrow T(V)$ such that $T(V)=\rho_B(B)\oplus I$ by mapping a basis element to its pre-image in $\la X\ra-\la R'\ra$. If $x\in B$, we write $\hat{x}$ for the image of $x$ under $\rho_B$.  If $w\in T(V)$, there exist unique $w^c\in\rho_B(B)$ and $w^r\in I$ such that $w=w^c+w^r$. We call $w^c$ the \emph{canonical form} of $w$, and if $w=w^c$ we say $w$ is
 \emph{in canonical form}. We say $w$ is \emph{reducible} if it is not in canonical form. 
 
As a consequence of Proposition \ref{basis}, we show that left annihilators of monomials are monomial left  ideals.
 
\begin{prop}
\label{Annihilators}
Let $x\in B$ be a nonzero monomial of degree $d'$ and let $w\in l.ann_B(x)$ be homogeneous of degree $d$. If $\hat{w}=\sum_{t=1}^k \alpha_t w_t$ where the $w_t\in \la X \ra_d$ are distinct and $\alpha_t\in K^*$, then $\pi_B(w_t\hat{x})=0$ for $1\le t\le k$.
\end{prop}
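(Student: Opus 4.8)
The plan is to exploit the very special shape of the reduction system underlying Proposition~\ref{basis}. First I would record the structural observation that the canonical form of a single \emph{pure} tensor is always either $0$ or a nonzero scalar times one basis monomial. Indeed, each reduction rule coming from $R'$ rewrites its leading pure tensor either to $0$ (for the relations $a_nb_nc_n$, $c_0a_0$, $b_{i+1}c_{i+1}a_{i+1}$, $c_ic_{i+1}$, and $b_{i+1}a_i$) or to a single signed pure tensor (the lone surviving rule $c_{i+1}a_{i+1}b_{i+1}\mapsto -a_ib_ic_i$, whose leading term is $c_{i+1}a_{i+1}b_{i+1}$ under the chosen order). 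Hence applying any one reduction to a pure tensor produces $0$ or $\pm$ a single pure tensor, and since the reduction process terminates with a unique normal form by Proposition~\ref{basis}, iterating shows that $p^c\in\{0\}\cup\{\lambda q : \lambda\in K^*,\ q\in\la X\ra-\la R'\ra\}$ for every pure tensor $p$.

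Next I would unwind the hypothesis. Because $x$ is a monomial, $\hat{x}$ is a single reduced pure tensor, and each $w_t$ appearing in the canonical form $\hat{w}$ is likewise a reduced pure tensor. The condition $w\in l.ann_B(x)$ says $\pi_B(\hat{w}\hat{x})=0$, that is, $(\hat{w}\hat{x})^c=0$. Since the splitting $T(V)=\rho_B(B)\oplus I$ is by $K$-subspaces, passing to canonical forms is $K$-linear, so
$$0=(\hat{w}\hat{x})^c=\sum_{t=1}^k\alpha_t\,(w_t\hat{x})^c,$$
and by the first step each summand $(w_t\hat{x})^c$ is $0$ or a scalar times a single basis monomial. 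Note also that $\pi_B(w_t\hat{x})=0$ is equivalent to $(w_t\hat{x})^c=0$, so this is exactly what must be established for each $t$.

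The crux, and the step I expect to be the main obstacle, is to show that no cancellation can occur in this sum: the nonzero monomials among the $(w_t\hat{x})^c$ are pairwise distinct. Granting this, the vanishing of the sum together with $\alpha_t\in K^*$ forces $(w_t\hat{x})^c=0$ for every $t$, which is the assertion. To prove the no-collision claim I would argue that distinct reduced left factors $w_s\ne w_t$ yield distinct nonzero normal forms. The key is that $w_t$ and $\hat{x}$ are \emph{separately} reduced, so the first reduction applied to $w_t\hat{x}$ must straddle the junction between the two factors, and every later reduction propagates outward from that junction; moreover the only rule that does not annihilate the tensor is the index-lowering rewrite $c_{i+1}a_{i+1}b_{i+1}\mapsto -a_ib_ic_i$. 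I would track how this rewrite cascades across the junction and verify that each such step is reversible once $\hat{x}$ is fixed, so that $w_t$ can be recovered from $(w_t\hat{x})^c$ together with $\hat{x}$; distinct $w_t$ therefore cannot collapse to a common normal form. This reversibility check, organized as a short case analysis on which trailing letters of $w_t$ meet the leading letters of $\hat{x}$, is the only genuinely computational part of the argument.
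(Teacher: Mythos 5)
Your proposal is correct and takes essentially the same route as the paper: the paper's proof likewise observes that the only non-annihilating rewrite is $c_ia_ib_i\mapsto -a_{i-1}b_{i-1}c_{i-1}$ applied once where $w_t$ meets $\hat{x}$ (its tensors $y_t$), uses the reducedness of $\hat{w}$ and $\hat{x}$ to rule out any further reduction, and concludes from the linear independence of the resulting distinct pure tensors that all coefficients would have to vanish. The deferred ``reversibility/no-collision'' case analysis is precisely the case analysis the paper carries out, and when you execute it you will find it is even simpler than you anticipate: for a fixed $\hat{x}$ the first letter $x_1$ pins down which of the two junction cases and which index $i$ can occur, and any attempted cascade beyond the single rewrite immediately meets an annihilating redex (such as $c_{i-2}c_{i-1}$ or $b_ia_{i-1}$), so in the nonzero case exactly one rewrite happens and $w_t$ is trivially recoverable.
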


\begin{proof}
Let $\hat{x}=x_1\cdots x_{d'}\in \la X \ra_{d'}$ and $w_t=w_{t,1}\cdots w_{t,d}\in \la X \ra_{d}$.
It suffices to consider the case where $\pi_B(w_t\hat{x})$ is nonzero for every $t$. Since $\hat{w}\hat{x}\in I$, $w_t\hat{x}$ is reducible for some $t$. Reordering if necessary, there is an index $j$ such that $w_t\hat{x}$ is reducible for $1\le t\le j$ and in canonical form for $j< t\le k$.

Fix an index $t$ such that $1 \leq t \leq j$. Since $\pi_B(w_t\hat{x})\neq 0$, Proposition \ref{basis} implies that $w_t\hat{x}$ must contain $c_ia_ib_i$ for some $1\le i\le n$. Since $\hat{w}$ and $\hat{x}$ are in canonical form, either $w_{t,d-1}w_{t,d}x_1=c_ia_ib_i$ or $w_{t,d}x_1x_2=c_ia_ib_i$. In the first case, define
$$
y_t=\alpha_t w_{t,1}\cdots w_{t,d-2}(a_{i-1}b_{i-1}c_{i-1})x_2\cdots x_{d'}
$$ 
and in the second case,
$$
y_t= \alpha_t w_{t,1}\cdots w_{t,d-1}(a_{i-1}b_{i-1}c_{i-1})x_3\cdots x_{d'}.
$$
Clearly, $\a_t w_t\hat{x}+y_t\in I$ so $z=\sum_{t=1}^j -y_t+\sum_{t=j+1}^k \a_t w_t\hat{x}\in I$. We claim that $z$ is in canonical form, and for this it suffices to check that $y_t$ is in canonical form. In the case $x_1=b_i$, Proposition \ref{basis} implies $y_t$ is reducible only if $w_{t,d-2}= c_{i-1}$ or $w_{t,d-2}=b_i$ or $x_2=a_{i-1}$ or $x_2 = c_i$. If $x_2=c_i$, then $\pi_B(y_t)=0$ which implies 
$\pi_B(w_t\hat{x})=\pi_B(w_t\hat{x}+y_t)=0$, a contradiction. The other three cases all lead to the contradiction that $\hat{w}$ or $\hat{x}$ is reducible. The case $x_1x_2=a_ib_i$ is the same as the case $x_1 = b_i$, with a shift in index. We conclude $z$ is in canonical form. Since $z \in I$ we know $z=0$ in $T(V)$. We note that all of the pure tensors appearing in $z=\sum_{t=1}^j -y_t+\sum_{t=j+1}^k \a_t w_t\hat{x}$ are distinct. Therefore $\a_t = 0$ for all $1 \leq t \leq k$, which is a contradiction.

\end{proof}

Henceforth, we identify the basis vectors $a_i$, $b_i$, $c_i$ with their images in $B$. 
The following lemma is immediate from Proposition \ref{Annihilators} and the presentation of the algebra.

\begin{lemma} 
\label{MonomialAnnihilators}
For $0\le i\le n$, the element $b_i\in B$ is left regular. 
For $1 \le i \le n-1$, 
$$\text{l.ann}_B(b_ic_i) = Bc_ia_i,\quad
\text{l.ann}_{B}(a_i)=Bb_ic_i+Bb_{i+1},\quad \text{and} $$
$$\text{l.ann}_B(c_ia_i)=Bb_i+Bc_{i-1}.$$
Furthermore
$$\text{l.ann}_B(b_nc_n) = Ba_n, \quad\text{l.ann}_B(c_n)=Ba_nb_n+Bc_{n-1}, \text{ and}$$
$$\text{l.ann}_B(a_0)=\text{l.ann}_B(a_0b_0)=Bb_1+Bc_0.
$$
\end{lemma}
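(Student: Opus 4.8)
The plan is to prove every statement by the same two-step method, using Proposition~\ref{Annihilators} to reduce each computation to an analysis of pure tensors. Fix a homogeneous element $x$ whose left annihilator is claimed to equal a left ideal $J=Bg_1+\cdots$ generated by the listed monomials $g_k$. First I would establish $J\subseteq \text{l.ann}_B(x)$ by checking directly that each generator kills $x$: e.g. $c_ia_i\cdot b_ic_i = c_i(a_ib_ic_i) = -c_ic_{i+1}a_{i+1}b_{i+1} = 0$, $b_ic_i\cdot a_i = b_ic_ia_i = 0$, $b_{i+1}\cdot a_i = 0$, $c_{i-1}\cdot c_ia_i = 0$, $a_n\cdot b_nc_n = a_nb_nc_n = 0$, and so on; each is a one-line reduction using a single defining relation together with $c_ic_{i+1}=0$. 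Since $\text{l.ann}_B(x)$ is a left ideal, this gives $Bg_k\subseteq \text{l.ann}_B(x)$ for every $k$.

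The substance is the reverse inclusion. By Proposition~\ref{Annihilators}, $\text{l.ann}_B(x)$ is a monomial left ideal, i.e. it is spanned over $K$ by those basis monomials $m$ with $m\hat x=0$, so it suffices to show every such $m$ ends in one of the generators $g_k$ (interpreting the boundary generators via the conventions $b_{n+1}=0$, $c_{-1}=0$). The key mechanism is that reducing a single pure tensor to canonical form proceeds through single pure tensors: each rewriting rule sends a leading monomial either to $0$ (the rules from $a_nb_nc_n$, $c_0a_0$, $b_jc_ja_j$, $c_ic_{i+1}$, $b_{i+1}a_i$) or to a single monomial with coefficient $-1$ (only the rule $c_ja_jb_j\to -a_{j-1}b_{j-1}c_{j-1}$). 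Consequently $m\hat x=0$ iff the reduction of $\hat m\hat x$ applies a zero-right-hand-side rule at some stage. Because $\hat m$ is already canonical, the only new leading-term factors in $\hat m\hat x$ are those straddling the junction between $\hat m$ and $\hat x$, and I would enumerate these finitely many straddle patterns for each $x$. For instance, for $x=a_i$ the only suffixes of $\hat m$ that create a leading factor when $a_i$ is appended are $b_{i+1}$ (giving $b_{i+1}a_i\to 0$) and $b_ic_i$ (giving $b_ic_ia_i\to 0$), so every killing monomial lies in $Bb_{i+1}+Bb_ic_i$; the cases of $c_ia_i$, $b_ic_i$, $c_n$, $a_0$, $a_0b_0$ are identical in spirit, each straddle immediately triggering a zero-rule.

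The one genuinely delicate point, which I expect to be the main obstacle, is the left regularity of $b_i$, because right multiplication by $b_i$ is the one situation in which the single nonzero-right-hand-side rule can fire. The only leading term ending in $b_i$ is $c_ia_ib_i$ (present precisely for $1\le i\le n$; for $i=0$ there is no such leading term and $b_0$ is trivially left regular), so a killing monomial would have to end in $c_ia_i$, whence $\hat m b_i = m'c_ia_ib_i\to -m'a_{i-1}b_{i-1}c_{i-1}$. Here I must show this resulting single tensor is itself in canonical form, hence nonzero, so that no zero-rule ever fires. The point is that canonicality of $m'c_ia_i$ forbids $m'$ from ending in $c_{i-1}$, in $b_i$, or (when $i=1$) in $c_0$ --- these being exactly the suffixes that would create a straddling leading factor with $a_{i-1}b_{i-1}c_{i-1}$, namely $c_{i-1}c_i$, $b_ic_ia_i$, $c_0c_1$ --- so $m'a_{i-1}b_{i-1}c_{i-1}$ has no leading-term factor and represents a nonzero basis element. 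This gives $\text{l.ann}_B(b_i)=0$. The remaining bookkeeping is routine: one checks the boundary cases $i=n$ and $i=0$ separately, where the relations $a_nb_nc_n$ and $c_0a_0$ and the conventions $b_{n+1}=0$, $c_{-1}=0$ absorb what would otherwise be the generic generators, which is exactly why, for example, $\text{l.ann}_B(b_nc_n)=Ba_n$ rather than $Bc_na_n$.
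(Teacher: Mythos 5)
Your proposal is correct and follows exactly the route the paper intends: the paper dismisses this lemma as ``immediate from Proposition~\ref{Annihilators} and the presentation of the algebra,'' and your argument is precisely that reduction --- use Proposition~\ref{Annihilators} to restrict to canonical monomials, then enumerate the straddling factors from $R'$ at the junction, with the one nonzero rewriting rule $c_ia_ib_i\to -a_{i-1}b_{i-1}c_{i-1}$ handled separately for the left regularity of $b_i$. Your straddle enumerations and the canonicality check for $m'a_{i-1}b_{i-1}c_{i-1}$ (including the $i=1$ and boundary conventions $b_{n+1}=0$, $c_{-1}=0$) are all accurate, so this is a correct, fully detailed version of the paper's own one-line proof.
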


\section{A Minimal Resolution of $_BK$}

In this section we construct a minimal resolution of the trivial $B$-module $_B K$. We then use a criterion of Cassidy and Shelton to prove $B$ is a $\K_2$-algebra.

For $d \in \Z$ let $B(d)$ denote the $B$-module $B$ with grading shifted by $d$, that is $B(d)_k = B_{k+d}$. If $\bar{d}=(d_1,\ldots,d_r)\in\Z^r$ we define 
$$
B(\bar{d})=B(d_1,d_2,\ldots,d_r) = B(d_1)\oplus B(d_2)\oplus\cdots\oplus B(d_{r}).
$$ 

If $Q=B(d_1,\ldots,d_r)$ and $Q'=B(D_1,\ldots,D_{r'})$ are graded free left $B$-modules and $M=(m_{i,j})$ is an $r \times r'$ matrix of homogeneous elements of $B$ such that $\deg m_{i,j}=D_j-d_i$, then right multiplication by $M$ defines a degree 0 homomorphism $f:Q\rightarrow Q'$. We denote this homomorphism $Q\xrightarrow{M} Q'$, and for convenience refer to both the matrix and the homomorphism it defines as $M$. A graded free resolution 
$$ \cdots \rightarrow Q_n \xrightarrow{M_n} Q_{n-1} \rightarrow \cdots \rightarrow Q_0$$ 
of the $B$-module $N$ is {\it minimal} if $\im(M_n) \subseteq B_+ Q_{n-1}$ for every $n \ge 1$. Equivalently, each entry of the matrix representation of $M_n$ is an element of $B_+$. 

\begin{lemma} 
\label{PreAi}
For $1 \leq i \leq n-1$ the sequence 
$$
B(-6,-5)\xrightarrow{\begin{pmatrix} b_ic_i\\ b_{i+1}\\\end{pmatrix}}
B(-4)\xrightarrow{\begin{pmatrix} a_i & c_{i+1}a_{i+1}\\ \end{pmatrix} }
B(-3,-2)\xrightarrow{\begin{pmatrix} b_ic_i\\ b_{i+1}\\ \end{pmatrix}} 
B(-1)
$$ 
of graded free $B$-modules is exact at $B(-4)$ and $B(-3,-2)$.
\end{lemma}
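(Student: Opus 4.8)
My plan is to prove the two exactness statements, $\ker = \im$ at $B(-4)$ and at $B(-3,-2)$, as separate claims, reducing each to the annihilator computations of Lemma \ref{MonomialAnnihilators}. Write the three maps by their action on (row) vectors: $k(p,q)=pb_ic_i+qb_{i+1}$ for $B(-6,-5)\to B(-4)$, $h(w)=(wa_i,\,wc_{i+1}a_{i+1})$ for $B(-4)\to B(-3,-2)$, and $g(u,v)=ub_ic_i+vb_{i+1}$ for $B(-3,-2)\to B(-1)$. The containments $\im\subseteq\ker$ at both spots are routine relation checks: the composite $B(-4)\to B(-1)$ is right multiplication by $a_ib_ic_i+c_{i+1}a_{i+1}b_{i+1}\in R$, and the composite $B(-6,-5)\to B(-3,-2)$ vanishes because $b_ic_ia_i$, $b_{i+1}a_i$, $c_ic_{i+1}$, and $b_{i+1}c_{i+1}a_{i+1}$ are all zero in $B$. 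The substance is the reverse containment in each case.

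For exactness at $B(-4)$ I would observe that $\ker h$ is, by definition, $\text{l.ann}_B(a_i)\cap\text{l.ann}_B(c_{i+1}a_{i+1})$. The key point is that $\text{l.ann}_B(a_i)\subseteq\text{l.ann}_B(c_{i+1}a_{i+1})$: by Lemma \ref{MonomialAnnihilators} the former equals $Bb_ic_i+Bb_{i+1}$, and both generators kill $c_{i+1}a_{i+1}$, since $b_ic_i\cdot c_{i+1}a_{i+1}=b_i(c_ic_{i+1})a_{i+1}=0$ and $b_{i+1}\cdot c_{i+1}a_{i+1}=b_{i+1}c_{i+1}a_{i+1}=0$. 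Hence the intersection collapses to $\text{l.ann}_B(a_i)=Bb_ic_i+Bb_{i+1}=\im k$, giving exactness at $B(-4)$ in one stroke.

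For exactness at $B(-3,-2)$, suppose $(u,v)\in\ker g$, so $ub_ic_i+vb_{i+1}=0$. Right multiplying by $c_{i+1}$ and using $c_ic_{i+1}=0$ gives $vb_{i+1}c_{i+1}=0$, i.e. $v\in\text{l.ann}_B(b_{i+1}c_{i+1})$. For $1\le i\le n-2$ this annihilator is $Bc_{i+1}a_{i+1}$, so $v=wc_{i+1}a_{i+1}$ for some $w$. Substituting $vb_{i+1}=wc_{i+1}a_{i+1}b_{i+1}=-wa_ib_ic_i$ into the original equation yields $(u-wa_i)b_ic_i=0$, whence $u-wa_i\in\text{l.ann}_B(b_ic_i)=Bc_ia_i$ and $u=(w+sc_i)a_i$ for some $s$. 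Putting $w'=w+sc_i$, the relation $c_ic_{i+1}=0$ gives $w'c_{i+1}a_{i+1}=v+sc_ic_{i+1}a_{i+1}=v$, so $(u,v)=h(w')\in\im h$, as desired.

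The main obstacle is the boundary case $i=n-1$. There the same computation only yields $v\in\text{l.ann}_B(b_nc_n)=Ba_n$, which is strictly larger than the $Bc_na_n$ needed to run the substitution above (equivalently, $\text{l.ann}_B(a_n)$ itself does not appear in Lemma \ref{MonomialAnnihilators}). To close this gap I would descend to the monomial basis of Proposition \ref{basis}: writing $v=\sum_t\alpha_tv_t$ in canonical form, Proposition \ref{Annihilators} forces each $v_t$ to end in $a_n$, say $v_t=v_t'a_n$, and the goal is to show each $v_t'$ must in fact end in $c_n$. This hinges on $c_na_nb_n$ being a leading term (it equals $-a_{n-1}b_{n-1}c_{n-1}$): when $v_t'$ ends in $c_n$ the product $v_t'a_nb_n$ reduces into $Bb_{n-1}c_{n-1}$ and can cancel against $ub_{n-1}c_{n-1}$, whereas otherwise $v_t'a_nb_n$ is already in canonical form with a suffix that cannot cancel. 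Once $v\in Bc_na_n$ is forced, the substitution of the generic case applies verbatim. I expect this monomial bookkeeping, rather than any conceptual difficulty, to be where the real care is required.
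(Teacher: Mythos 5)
Your proof is correct, and while your treatment of exactness at $B(-4)$ coincides with the paper's (the paper just cites Lemma \ref{MonomialAnnihilators}; your collapse of $\text{l.ann}_B(a_i)\cap\text{l.ann}_B(c_{i+1}a_{i+1})$ to $\text{l.ann}_B(a_i)$ is what makes it ``clear''), your argument at $B(-3,-2)$ is genuinely different. The paper never touches $\text{l.ann}_B(b_{i+1}c_{i+1})$: it splits $\hat{w}=w_1+w_2$ and $\hat{x}=x_1+x_2$ according to whether $w_1b_ic_i$ and $x_1b_{i+1}$ are in canonical form, uses Proposition \ref{basis} to write $x_2=x'c_{i+1}a_{i+1}$ (the only leading pattern ending in $b_{i+1}$ is $c_{i+1}a_{i+1}b_{i+1}$), shows $x_1=0$ by linear independence of canonical monomials, and finishes with $\text{l.ann}_B(b_ic_i)=Bc_ia_i$ --- one argument uniform in $1\le i\le n-1$. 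You instead right-multiply the kernel relation by $c_{i+1}$ and chase the computed annihilators, which buys a short, purely module-level proof for $1\le i\le n-2$ with no Gr\"{o}bner bookkeeping, at the cost of the boundary case $i=n-1$, where $\text{l.ann}_B(b_nc_n)=Ba_n$ is too big. Your sketch there is sound and does complete: Proposition \ref{Annihilators} forces each $v_t$ to end in $a_n$, and if $v_t=v_t'a_n$ with $v_t'$ not ending in $c_n$, then $v_t'a_nb_n$ is in canonical form (the only elements of $R'$ fitting the suffix $a_nb_n$ are $a_nb_nc_n$ and $c_na_nb_n$, both excluded) and ends in $b_n$, whereas every other term in the reduced expression of $ub_{n-1}c_{n-1}+vb_n$ either vanishes or ends in $c_{n-1}$, so Proposition \ref{basis} forces those coefficients to vanish and $v\in Bc_na_n$ follows. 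In effect, your boundary analysis re-derives by hand at $i=n-1$ exactly the reducibility step $x_2=x'c_{i+1}a_{i+1}$ that the paper's uniform argument performs for every $i$; the paper avoids the case split because it only ever invokes the annihilator of the left-hand factor $b_ic_i$, whose index stays in the interior range.
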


\begin{proof} 
The sequence is clearly a complex. Exactness at $B(-4)$ is clear from Lemma \ref{MonomialAnnihilators}. To prove exactness at $B(-3,-2)$, let $w$, $x\in B$ and suppose 
${\pi_B(\hat{w}b_ic_i+\hat{x}b_{i+1})=0}$. Let $\hat{w}=w_1+w_2$ and $\hat{x}=x_1+x_2$ where $w_1$, $w_2$, $x_1$, and $x_2\in T(V)$ (all in canonical form) are such that $w_1b_ic_i$ and $x_1b_{i+1}$ are in canonical form, and all pure tensors in $w_2b_ic_i$ and $x_2b_{i+1}$ are reducible. 

By Proposition \ref{basis}, $w_2=w'c_ia_i$ and $x_2=x'c_{i+1}a_{i+1}$ for some $w', x' \in T(V)$. So ${\pi_B(w_2b_ic_i)= \pi_B(w'c_ia_ib_i c_i) = \pi_B(-w'a_{i-1}b_{i-1}c_{i-1}c_i) =0}.$
Since $x_2$ is in canonical form, no pure tensor in $x'$ can end in $b_{i+1}$ or $c_i$. Now consider 
\begin{align*}
0 &= \pi_B(w_1 b_i c_i+x_1b_{i+1}+x_2b_{i+1}) \\
   &= \pi_B(w_1 b_i c_i+x_1b_{i+1}-x'a_ib_ic_i).
\end{align*}
Thus $z = w_1 b_i c_i+x_1b_{i+1}-x'a_ib_ic_i$ is in $I$. Since $z$ is also in canonical form we know $z=0$. Therefore $x_1 b_{i+1} = x'a_i b_i c_i - w_1 b_i c_i$ in $T(V)$ and it follows that $x_1=0$. 

We have $\pi_B(\hat{w}b_ic_i-x'a_ib_ic_i)=0$, thus $\pi_B(\hat{w}-x'a_i)\in\text{l.ann}_B(b_ic_i)$. By Lemma \ref{MonomialAnnihilators}, $w-\pi_B(x')a_i\in Bc_ia_i$, so there exists $z\in B$ such that $w=zc_ia_i+\pi_B(x')a_i$. Therefore
\begin{align*}
(w\ \ x )&= ( zc_ia_i+\pi_B(x')a_i\ \ \pi_B(x')c_{i+1}a_{i+1} )\\
&= (zc_i+\pi_B(x')) (a_i\ \ c_{i+1}a_{i+1} ).
\end{align*}
This proves exactness at $B(-3,-2)$.

\end{proof}

\begin{lemma}
\label{ResolutionOfAi}
The periodic complex
$$\cdots
B(-6)\xrightarrow{(b_nc_n)}
B(-4)\xrightarrow{(a_n)}
B(-3)\xrightarrow{(b_nc_n)}
B(-1)
$$ is a minimal graded free resolution of $Ba_n$.

If $1\le i\le n-1$, the periodic complex
$$\cdots\rightarrow
B(-3t-4)\xrightarrow{\begin{pmatrix} a_i & c_{i+1}a_{i+1}\\ \end{pmatrix} }B(-3t-3,-3t-2)\xrightarrow{
\begin{pmatrix} b_ic_i\\ b_{i+1}\\ \end{pmatrix}} 
$$
$$
\cdots 
\rightarrow B(-4)
\xrightarrow{\begin{pmatrix} a_i & c_{i+1}a_{i+1}\\ \end{pmatrix} }B(-3,-2)
\xrightarrow{\begin{pmatrix} b_ic_i\\ b_{i+1}\\ \end{pmatrix}}B(-1)$$
of free left $B$-modules is a  minimal graded free resolution of $Ba_i$.
\end{lemma}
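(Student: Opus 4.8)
The plan is to realize each complex as a periodic resolution and to deduce exactness at every interior node from a single period, using Lemma~\ref{PreAi} and the annihilator computations of Lemma~\ref{MonomialAnnihilators}. Both sequences are readily seen to be complexes: in the $Ba_i$ case the composite $B(-4)\to B(-3,-2)\to B(-1)$ is right multiplication by $a_ib_ic_i+c_{i+1}a_{i+1}b_{i+1}\in I$, while the composite $B(-6,-5)\to B(-4)\to B(-3,-2)$ vanishes because $b_ic_ia_i,\ b_{i+1}a_i,\ c_ic_{i+1},\ b_{i+1}c_{i+1}a_{i+1}\in I$; in the $Ba_n$ case the composites are right multiplication by $a_nb_nc_n$ and $b_nc_na_n$, both in $I$. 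Minimality is immediate since every matrix entry ($a_i$, $c_{i+1}a_{i+1}$, $b_ic_i$, $b_{i+1}$, and $a_n$, $b_nc_n$) lies in $B_+$.

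For $Ba_i$ with $1\le i\le n-1$, I would augment by $B(-1)\xrightarrow{(a_i)}Ba_i$, $r\mapsto ra_i$, which is surjective with kernel $\text{l.ann}_B(a_i)=Bb_ic_i+Bb_{i+1}$ by Lemma~\ref{MonomialAnnihilators}. This kernel is exactly the image of $\left(\begin{smallmatrix}b_ic_i\\ b_{i+1}\end{smallmatrix}\right)\colon B(-3,-2)\to B(-1)$, giving exactness at $B(-1)$. Exactness at every node to the left then follows from Lemma~\ref{PreAi}, which provides exactness of one four-term window at its two interior nodes $B(-4)$ and $B(-3,-2)$; since the remainder of the complex is obtained by repeatedly applying the (exact) internal-degree shift $(-3)$ to this window, exactness propagates to $B(-6,-5)$, $B(-7)$, and so on through all interior nodes.

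For $Ba_n$ the modules are free of rank one, so exactness reduces to the two alternating identities $\text{l.ann}_B(b_nc_n)=Ba_n$ and $\text{l.ann}_B(a_n)=Bb_nc_n$. The first is recorded in Lemma~\ref{MonomialAnnihilators}; the second is the $i=n$ specialization of the pattern $\text{l.ann}_B(a_i)=Bb_ic_i+Bb_{i+1}$ under the convention $b_{n+1}=0$, and I would prove it directly by the monomial method of Proposition~\ref{Annihilators}. The inclusion $Bb_nc_n\subseteq\text{l.ann}_B(a_n)$ holds because $b_nc_na_n\in I$. Conversely, writing $\hat w=\sum_t\alpha_t w_t$ for $w\in\text{l.ann}_B(a_n)$, Proposition~\ref{Annihilators} forces $\pi_B(w_ta_n)=0$ for each $t$; as $w_t$ and $a_n$ are in canonical form, $w_ta_n$ must contain a relator from $R'$ straddling the junction, and for $n\ge 1$ the only relator in $R'$ ending in $a_n$ is $b_nc_na_n$, so each $w_t$ ends in $b_nc_n$ and hence $w\in Bb_nc_n$.

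Since the structural heart of the argument is already contained in Lemma~\ref{PreAi} and Lemma~\ref{MonomialAnnihilators}, the only genuinely new computation is the annihilator $\text{l.ann}_B(a_n)=Bb_nc_n$, and the one point demanding care is the periodic-shift bookkeeping: one must verify that the alternating maps together with the degree shifts reproduce, up to the exact functor $(-3)$, exactly the window of Lemma~\ref{PreAi} at each stage, so that interior exactness propagates with no omitted node.
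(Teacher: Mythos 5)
Your proof is correct and takes essentially the same route as the paper: exactness at $B(-1)$ via $\text{l.ann}_B(a_i)=Bb_ic_i+Bb_{i+1}$ from Lemma \ref{MonomialAnnihilators}, exactness at all higher nodes by Lemma \ref{PreAi} together with the periodic internal-degree shift by $-3$, and the rank-one complex for $Ba_n$ reduced to the two alternating annihilator identities. Your only addition is the explicit monomial verification that $\text{l.ann}_B(a_n)=Bb_nc_n$ (the $i=n$ case under the convention $b_{n+1}=0$, valid for $n\ge 1$), a detail the paper subsumes in the remark that the $Ba_n$ case is immediate from Lemma \ref{MonomialAnnihilators}.
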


\begin{proof}
The resolution of $Ba_n$ is immediate from Lemma \ref{MonomialAnnihilators}. 

Let $1\le i\le n-1$. By Lemma \ref{MonomialAnnihilators}, $b_ic_i$ and $b_{i+1}$ generate the left annihilator of $a_i$. Exactness in higher degrees follows from Lemma \ref{PreAi} and a degree shift.

\end{proof}

The complexes of Lemma \ref{ResolutionOfAi} will be direct summands of our minimal resolution of $_BK$. We denote these complexes by $P^i_{\bullet}$ where $P^i_1=B(-1)$ for $1 \leq i \leq n$. 
The other summands of our resolution are built inductively. The following lemma provides the base cases for induction.

\begin{lemma}
\label{InductionBase}\ 
\begin{enumerate}
\item If $w$, $x\in B$ and $wc_ia_ib_i+xc_{i-1}=0$ for some $1\le i\le n$, then there exist $w',\ w'',\ x'\in B$ such that 
$$
w=w'+w''c_{i-1}\text{ and }{x=w'a_{i-1}b_{i-1}+x'c_{i-2}}.
$$
\item The complexes
$$
B(-4,-3)\xrightarrow{\begin{pmatrix} c_ia_i & a_{i-1}b_{i-1}\\ 0 & c_{i-2}\\ \end{pmatrix}}
B(-2,-2)\xrightarrow{\begin{pmatrix} b_i\\ c_{i-1} \end{pmatrix}}
B(-1)
$$ 
are exact at $B(-2,-2)$ for $1\le i\le n$.
\item The complex
$$
B(-4,-3)\xrightarrow{\begin{pmatrix} c_n & a_{n-1}b_{n-1}\\ 0 & c_{n-2}\\ \end{pmatrix}}
B(-3,-2)\xrightarrow{\begin{pmatrix} a_nb_n\\ c_{n-1} \end{pmatrix}}
B(-1)
$$ 
is exact at $B(-3,-2)$.
\end{enumerate}
\end{lemma}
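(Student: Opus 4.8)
The plan is to treat all three statements as syzygy computations carried out with the monomial basis of Proposition \ref{basis}, the annihilator description of Proposition \ref{Annihilators}, and Lemma \ref{MonomialAnnihilators}. The single most useful identity is the defining relation in the reindexed form $c_ia_ib_i=-a_{i-1}b_{i-1}c_{i-1}$, valid in $B$ for $1\le i\le n$. I will also use the fact, immediate from Proposition \ref{Annihilators} since $c_{j-1}c_j$ is the only obstruction ending in $c_j$, that $\text{l.ann}_B(c_j)=Bc_{j-1}$ for $1\le j\le n-1$ and that $c_0$ is left regular; these values extend Lemma \ref{MonomialAnnihilators}. Throughout I identify $Bc_j$ with the span of the basis monomials whose canonical form ends in $c_j$, which is legitimate because $mc_j$ is either already canonical or reduces to $0$ via $c_{j-1}c_j$.

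For part (1), I would first rewrite $wc_ia_ib_i=-wa_{i-1}b_{i-1}c_{i-1}$ using the reindexed relation, so that the hypothesis becomes $(x-wa_{i-1}b_{i-1})c_{i-1}=0$. Thus $x-wa_{i-1}b_{i-1}\in\text{l.ann}_B(c_{i-1})$, which is $Bc_{i-2}$ when $i\ge 2$ and $0$ when $i=1$. Writing $x-wa_{i-1}b_{i-1}=x'c_{i-2}$ yields the claim with $w'=w$ and $w''=0$; the slack term $w''c_{i-1}$ is simply not needed.

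Part (2) I would deduce from parts (1) and (3) by a right-multiplication trick. Given $(u,v)$ with $ub_i+vc_{i-1}=0$, multiplying on the right by $c_i$ and using $c_{i-1}c_i=0$ gives $ub_ic_i=0$. For $1\le i\le n-1$, Lemma \ref{MonomialAnnihilators} forces $u=sc_ia_i$, and substituting reduces the equation to exactly the hypothesis of part (1) with $w=s$, giving $v=sa_{i-1}b_{i-1}+tc_{i-2}$ and hence $(u,v)=(s,t)M_2$. For $i=n$ the same multiplication gives $ub_nc_n=0$, so $u=s'a_n$ by Lemma \ref{MonomialAnnihilators}; the equation becomes $s'a_nb_n+vc_{n-1}=0$, which is the hypothesis of part (3), and part (3) yields $s'=sc_n$, so $u=sc_na_n$ as required. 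The reverse inclusions (that the sequences are complexes) are the one-line identities $c_ia_ib_i+a_{i-1}b_{i-1}c_{i-1}=0$ and $c_{i-2}c_{i-1}=0$, together with their $i=n$ analogues.

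The technical heart is part (3), and the crux is to show that $ua_nb_n+vc_{n-1}=0$ forces $u\in Bc_n$; once this is known, writing $u=sc_n$ and using $c_na_nb_n=-a_{n-1}b_{n-1}c_{n-1}$ reduces the problem to $(v-sa_{n-1}b_{n-1})c_{n-1}=0$, handled exactly as in part (1). To prove $u\in Bc_n$ I would split the canonical form $\hat u=u_1+u_2$, where the pure tensors of $u_2$ end in $c_n$ (so $\pi_B(u_2)\in Bc_n$) and those of $u_1$ do not. Since appending $a_nb_n$ to a canonical monomial creates an obstruction only at a trailing $c_n$, the tensor $u_1a_nb_n$ is canonical with every monomial ending in $b_n$, while $u_2a_nb_n=-\pi_B(\tilde u)a_{n-1}b_{n-1}c_{n-1}$ after stripping the $c_n$ from $u_2$. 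The main obstacle is the bookkeeping showing that the canonical form of $\tilde u a_{n-1}b_{n-1}c_{n-1}$ consists only of monomials ending in $c_{n-1}$ (or is zero): the only obstruction touching the final letter is $c_{n-2}c_{n-1}$, which kills the term, and every other rewriting rule replaces a leading monomial by a canonical form ending in some $c_j$, never in $b_n$. Consequently the $b_n$-ending part of $ua_nb_n$ is exactly $u_1a_nb_n$; since $ua_nb_n\in Bc_{n-1}$ has canonical form supported on monomials ending in $c_{n-1}$, we must have $u_1a_nb_n=0$, whence $u_1=0$ and $u\in Bc_n$.
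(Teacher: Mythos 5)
Your proof is correct, but it reorganizes the argument in a genuinely different way from the paper, so it is worth comparing the two. For part (1) the paper splits $\hat{w}$ and $\hat{x}$ into canonical and reducible pieces at the tensor level and compares canonical forms; you instead collapse the whole statement to the single computation $\text{l.ann}_B(c_{i-1})=Bc_{i-2}$ for $2\le i\le n$ (and $\text{l.ann}_B(c_0)=0$ when $n\ge 1$), a fact \emph{not} contained in Lemma \ref{MonomialAnnihilators} but which you correctly derive from Propositions \ref{basis} and \ref{Annihilators}, since $c_{i-2}c_{i-1}$ is the only element of $R'$ ending in $c_{i-1}$ for $i\le n$. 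This yields the sharper conclusion $w''=0$, which is consistent with (and implies) the paper's statement because $w''c_{i-1}a_{i-1}b_{i-1}=-w''a_{i-2}b_{i-2}c_{i-2}\in Bc_{i-2}$. For part (2) the paper argues directly that every pure tensor of $\hat{w}b_i$ is reducible (no canonical monomial of $\hat{x}c_{i-1}$ ends in $b_i$), giving $w=w'c_ia_i$ uniformly for $1\le i\le n$, and then deduces (3) \emph{from} (2); your right-multiplication by $c_i$ lands in $\text{l.ann}_B(b_ic_i)$, which by Lemma \ref{MonomialAnnihilators} equals $Bc_ia_i$ only for $i\le n-1$, so you must prove (3) first and route the $i=n$ case of (2) through it — reversing the paper's dependency, with no circularity since your (3) is self-contained. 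Your direct proof of (3), splitting $\hat{u}=u_1+u_2$ along trailing $c_n$'s and observing that the canonical form of anything in $Bc_{n-1}$ is supported on monomials ending in $c_{n-1}$ (never $b_n$), is essentially the paper's proof technique for (2) transplanted into (3); I checked the one loose spot, namely that the only obstructions created by appending $a_nb_n$ to a canonical monomial end at a trailing $c_n$ (including the $b_nc_n$ case, which also ends in $c_n$ and so lies in $u_2$), and it holds. On balance, your approach buys shorter proofs of (1) and (2) and isolates all canonical-form bookkeeping in one place, at the cost of verifying the supplementary annihilator $\text{l.ann}_B(c_j)=Bc_{j-1}$ ($1\le j\le n-1$), which is mechanical; the paper's order buys a uniform (2) and a very short (3).
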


\begin{proof}
For (1), we assume $w$, $x\in B$ and $wc_ia_ib_i+xc_{i-1}=0$ for some $i$,
$1\le i\le n$. Since $c_ia_ib_i=-a_{i-1}b_{i-1}c_{i-1}$, we have 
$${\pi_B(\hat{x}c_{i-1}-\hat{w}a_{i-1}b_{i-1}c_{i-1})=0}.$$  Let $\hat{w}=w_1+w_2$ and 
$\hat{x}=x_1+x_2$ where $w_1$, $w_2$, $x_1$, and $x_2\in T(V)$ are such that $w_1a_{i-1}b_{i-1}c_{i-1}$ and $x_1c_{i-1}$ are in canonical form, and all pure tensors in $w_2a_{i-1}b_{i-1}c_{i-1}$ and $x_2c_{i-1}$ are reducible. 

By Proposition \ref{basis}, there exist $y'$, $y''$, and $z'\in T(V)$ so that $w_2=y'b_i+y''c_{i-1}$ and $x_2=z'c_{i-2}$. (If $i=1$, then $z'=0$.) Therefore we have $\pi_B(w_2a_{i-1}b_{i-1}c_{i-1})=0$ and 
$\pi_B(x_2c_{i-1})=0$.

We have $\pi_B(x_1c_{i-1}-w_1a_{i-1}b_{i-1}c_{i-1})=0$, and since all terms are in canonical form, $x_1=w_1a_{i-1}b_{i-1}$. Since ${b_ia_{i-1}=0}$, we may write $x_1=(w_1+y'b_i)a_{i-1}b_{i-1}$. The result follows by setting $w'=\pi_B(w_1+y'b_i)$, $w''=\pi_B(y'')$, and $x'=\pi_B(z')$.

For (2), suppose $w, x\in B$ such that $wb_i+xc_{i-1}=0$. Consider $\hat x c_{i-1}$ and notice that no pure tensor in its canonical form can end in $b_i$. So every pure tensor in 
$\hat{w}b_i$ is reducible. By Proposition \ref{basis}, there exists $w'\in B$ such that 
$w=w'c_ia_i$. By (1), there exist $w'', w''', x' \in B$ such that $$w' = w''+w'''c_{i-1} \text{ and } x = w''a_{i-1} b_{i-1} + x'c_{i-2}.$$ Now note that $x = w'a_{i-1}b_{i-1} + (x'+w'''a_{i-2}b_{i-2})c_{i-2}$. The result follows.

For (3), suppose $w, x\in B$ such that $wa_nb_n+xc_{n-1}=0$. Part (2) implies $wa_n=w'c_na_n$ and $x=w'a_{n-1}b_{n-1}+x'c_{n-2}$ for some $w', x' \in B$. From the proof of (2), $\hat{w'}c_na_n$ may be assumed to be in canonical form. Let $\hat{w}=w_1+w_2$ where $w_1a_n$ is in canonical form and all pure tensors of $w_2a_n$ are reducible. By Proposition \ref{basis}, there exists $y'\in B_n$ such that $w_2=y'b_nc_n$. Thus $\pi_B(w_2a_n)=0$ and we have $\pi_B(w_1a_n-\hat{w'}c_na_n)=0$. Since both terms are in canonical form, $w_1=\hat{w'}c_n$. So 
$$
w=\pi_B(\hat{w'}+y'b_n)c_n\text{ and } x=\pi_B(\hat{w'}+y'b_n)a_{n-1}b_{n-1}+\pi_B(x')c_{n-2}.
$$ 
The result follows.

\end{proof}

The following lemma is clear by combining Lemma \ref{MonomialAnnihilators} with Lemma \ref{InductionBase}(2).

\begin{lemma}
\label{ResolutionA0}
The periodic complex
$$
\cdots\rightarrow
B(-3t-4)\xrightarrow{\begin{pmatrix} a_{0}b_{0} & c_1a_1\\ \end{pmatrix} }
B(-3t-2,-3t-2)\xrightarrow{\begin{pmatrix} c_0\\ b_1\\ \end{pmatrix}}
$$
$$
\cdots\rightarrow B(-4)\xrightarrow{\begin{pmatrix}  a_0b_0 & c_1a_1\\ \end{pmatrix} }
B(-2,-2)\xrightarrow{\begin{pmatrix} c_0\\ b_1\\ \end{pmatrix}}
B(-1)
$$
of free left $B$-modules is a minimal graded free resolution of $Ba_0$.
\end{lemma}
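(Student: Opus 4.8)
The statement to prove is Lemma \ref{ResolutionA0}, which asserts that a specific periodic complex is a minimal graded free resolution of $Ba_0$. The lemma's own preamble tells us the proof strategy: it is "clear by combining Lemma \ref{MonomialAnnihilators} with Lemma \ref{InductionBase}(2)." So my plan follows that hint, verifying the three things a resolution claim requires: that the sequence is a complex, that it is exact, and that it is minimal.

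First I would establish the \textbf{base of the resolution}, i.e.\ exactness at the rightmost nonzero terms $B(-1)$ and $B(-2,-2)$. The map $B(-2,-2)\to B(-1)$ is right multiplication by $\begin{pmatrix} c_0\\ b_1\end{pmatrix}$, and its image is $Bc_0+Bb_1$. By Lemma \ref{MonomialAnnihilators}, applied in the boundary case $i=0$, we have $\text{l.ann}_B(a_0)=Bb_1+Bc_0$; this is exactly the statement that $Bc_0+Bb_1$ is the kernel of the map $B(-1)\to Ba_0$ sending $1\mapsto a_0$, so the augmented sequence is exact at $B(-1)$ and the claimed module being resolved really is $Ba_0$. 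For exactness at $B(-2,-2)$, I would invoke Lemma \ref{InductionBase}(2) with $i=1$: that lemma states precisely that $B(-4,-3)\xrightarrow{M}B(-2,-2)\xrightarrow{(c_0,b_1)^{t}}B(-1)$ is exact at $B(-2,-2)$, where the relevant annihilator relations $wc_0+xb_1=0$ are resolved by the matrix $\begin{pmatrix} c_1a_1 & a_0b_0\\ 0 & c_{-1}\end{pmatrix}$; since $c_{-1}=0$ by the convention established in Section 2, this matrix collapses to the claimed $\begin{pmatrix} a_0b_0 & c_1a_1\end{pmatrix}$ form (after the harmless reordering of columns/rows matching the lemma's presentation). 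I should check carefully that the index conventions and the ordering of the two summands line up between Lemma \ref{InductionBase}(2) and the present statement.

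Next I would handle the \textbf{periodicity} in higher homological degrees. The complex repeats with period two after the degree shift by $3$: each syzygy $B(-3t-2,-3t-2)\xrightarrow{(c_0,b_1)^t}$ followed by $B(-3t-4)\xrightarrow{(a_0b_0,\,c_1a_1)}$ reproduces, up to the shift $(-3,-3)$ on internal degrees, the pattern already verified at the bottom. The key point is that exactness at every such stage reduces to the same two algebraic facts: (a) the kernel of right multiplication by $\begin{pmatrix} c_0\\ b_1\end{pmatrix}$ is generated by $a_0b_0$ and $c_1a_1$ (this is where Lemma \ref{InductionBase}(2) with $i=1$ does the work, since a relation $wc_0+xb_1=0$ forces $w=w'c_1a_1$ and then $x=w'a_0b_0+x'c_{-1}=w'a_0b_0$), and (b) the kernel of right multiplication by $\begin{pmatrix} a_0b_0 & c_1a_1\end{pmatrix}$ is generated by $\begin{pmatrix} c_0\\ b_1\end{pmatrix}$, which is again an annihilator computation feeding back into Lemma \ref{MonomialAnnihilators}. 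Because the maps are literally the same matrices at each stage (only the grading shifts), a single verification together with the degree shift establishes exactness everywhere; I would phrase this as an induction on $t$ whose inductive step is identical to the base case.

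Finally, \textbf{minimality} is immediate: every entry of every matrix in the complex—namely $c_0$, $b_1$, $a_0b_0$, and $c_1a_1$—lies in $B_+$, so $\im(M_t)\subseteq B_+Q_{t-1}$ for all $t\ge 1$, which is exactly the criterion for minimality stated just before Lemma \ref{PreAi}. The main obstacle, such as it is, is bookkeeping rather than mathematics: I must confirm that the boundary conventions $c_{-1}=0$ and the $i=0,1$ specializations of Lemmas \ref{MonomialAnnihilators} and \ref{InductionBase} genuinely produce the matrices written in the statement, including the ordering of the two $B(-3t-2)$ summands. Once those identifications are checked, the exactness and minimality claims follow directly from the cited lemmas with no further computation, justifying the assertion that the result is "clear."
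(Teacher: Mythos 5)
Your proposal is correct and takes essentially the same route as the paper, whose entire proof is the remark that the lemma follows by combining Lemma \ref{MonomialAnnihilators} (giving $\text{l.ann}_B(a_0)=\text{l.ann}_B(a_0b_0)=Bb_1+Bc_0$) with Lemma \ref{InductionBase}(2) at $i=1$, where the convention $c_{-1}=0$ collapses the matrix to the single row $(a_0b_0\ \ c_1a_1)$ after the column swap you note. Your expansion---periodicity reducing everything to the two base verifications, and minimality from all matrix entries lying in $B_+$---is exactly the intended argument.
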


We denote the complex of Lemma \ref{ResolutionA0} by $P^0_{\bullet}$ where $P^0_1=B(-1)$.

To simplify the exposition from this point, we introduce an operation which we use to inductively build matrices in our minimal resolution of $_BK$.

If $M=(m_{i,j})$ is an $a\times b$ matrix and $N=(n_{i,j})$ is a $c\times d$ matrix, we define the \emph{star product}  $M\star N$ to be the $(a+c-1)\times (b+d)$ matrix 
$$
\begin{pmatrix} 
m_{1,1} & \cdots & m_{1,b} &               &            &              \\
\vdots    &             &   \vdots  &               &  0        &               \\
m_{a,1} & \cdots & m_{a,b} & n_{1,1} & \cdots & n_{1,d} \\
               &   0       &                & \vdots    &            &\vdots    \\
               &            &                & n_{c,1}  & \cdots & n_{c,d} \\
\end{pmatrix}.
$$
We note that this product is associative and we define 
$$
M\star_{j=1}^p N_j=M\star N_1\star \cdots  \star N_p.
$$ 

For $1\le i\le n$ let $\Gamma_i=\begin{pmatrix} a_ib_i\\ c_{i-1}\\ \end{pmatrix}$ and let $T_i = \begin{pmatrix} b_i \\ c_{i-1} \\ \end{pmatrix}$. Let
$\Gamma_0=(a_0b_0)$.  It will be convenient to define $M\star \Gamma_i =M$ for $i<0$ and $(\ ) \star \Gamma_i=\Gamma_i$, where $( \ )$ denotes the empty matrix. 

The next lemma is the key to making inductive arguments in later proofs. 

\begin{lemma}
\label{InductionStep}
Let $r_1$, $r_2$, $r_3$ be positive integers and $\bar{d_1}\in\Z^{r_1}$, ${\bar{d_2}\in\Z^{r_2}}$, 
$\bar{d_3}\in\Z^{r_3}$. Let $d_{j, k}$ denote the $k$th component of $\bar{d_j}$. Let $i$ be a positive integer. Let $M$ and $N$ be matrices of homogeneous elements of $B$ such that 
\begin{equation*}
\quad B(\bar{d_3})\xrightarrow{N\star\Gamma_i} B(\bar{d_2})\xrightarrow{M} B(\bar{d_1})
\end{equation*} 
is an exact sequence of degree zero homomorphisms.
\begin{enumerate}

\item If $i = 1$, then the sequence
$$B(\bar{d_3}) \xrightarrow{N\star \Gamma_1}
B(\bar{d_2}) \xrightarrow{M\star \Gamma_0}
B(\bar{d_1}) \oplus B(d_{2, r_2}+2)
$$
is exact at $B(\bar{d_2})$.

\item If $i = 2$, then the sequence
$$B(\bar{d_3}) \xrightarrow{N\star \Gamma_2 \star \Gamma_0}
B(\bar{d_2}) \oplus B(d_{3, r_3}+2) \xrightarrow{M\star \Gamma_1}
B(\bar{d_1}) \oplus B(d_{3, r_3}+3)
$$
is exact at $B(\bar{d_2}) \oplus B(d_{3, r_3})$.

\item If $i > 2$, then for all $t \in \Z$ the sequence
$$
B(\bar{d_3})\oplus B(t-2)\xrightarrow{N\star \Gamma_{i}\star\Gamma_{i-2}} 
B(\bar{d_2})\oplus B(t-1)\xrightarrow{M\star \Gamma_{i-1}} 
B(\bar{d_1})\oplus B(t)
$$ 
is exact at $B(\bar{d_2})\oplus B(t-1)$.

\item  If $i > 2$ and 
\begin{equation*}
\quad B(\bar{d_3})\xrightarrow{N\star T_i} B(\bar{d_2})\xrightarrow{M} B(\bar{d_1})
\end{equation*} 
is exact at $B(\bar{d_2})$, 
then the sequence 
$$
B(\bar{d_3})\oplus B(t-2)\xrightarrow{N\star T_i\star\Gamma_{i-2}} 
B(\bar{d_2})\oplus B(t-1)\xrightarrow{M\star \Gamma_{i-1}} 
B(\bar{d_1})\oplus B(t)
$$ is exact at $B(\bar{d_2})\oplus B(t-1)$.
\end{enumerate}
\end{lemma}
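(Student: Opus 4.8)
The plan is to treat all four parts with a single template, since each one asserts that attaching one more star-factor to an exact three-term sequence preserves exactness in the middle. Throughout I would use that a matrix acts by right multiplication and read the star products as block matrices: writing $A=N\star\Gamma_i$ (or $A=N\star T_i$ in part (4)), the two maps in the conclusion have the form
\[
\psi=A\star\Gamma_{i-2}=\begin{pmatrix} A & f \\ 0 & c_{i-3}\end{pmatrix},\qquad
\phi=M\star\Gamma_{i-1}=\begin{pmatrix} M & g \\ 0 & c_{i-2}\end{pmatrix},
\]
where $f$ (resp.\ $g$) is the column whose only nonzero entry is $a_{i-2}b_{i-2}$ (resp.\ $a_{i-1}b_{i-1}$) in its last slot. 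The cases $i=1$ and $i=2$ are the degenerate versions in which the bottom factor $\Gamma_{i-2}$ (resp.\ $\Gamma_{i-1}$) is replaced by $\Gamma_0=(a_0b_0)$, which has no $c$-entry, so one or both extra rows/columns collapse; these are exactly where the resolution meets the boundary relation $c_0a_0$. For each part I would first check the sequence is a complex, then prove exactness.

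For the complex condition I would multiply the two block matrices to get $\psi\phi=\left(\begin{smallmatrix} AM & Ag+fc_{i-2}\\ 0 & c_{i-3}c_{i-2}\end{smallmatrix}\right)$. Here $AM=(N\star\Gamma_i)M=0$ by the hypothesized complex, and $c_{i-3}c_{i-2}=0$ by the relation $c_kc_{k+1}=0$. The last column of $A$ is $(\dots,a_ib_i,c_{i-1})^{T}$ (with $b_i$ in place of $a_ib_i$ in part (4)), so the nonzero entries of $Ag$ are $a_ib_ia_{i-1}b_{i-1}$ and $c_{i-1}a_{i-1}b_{i-1}$: the first dies because $b_ia_{i-1}=0$, and the second equals $-a_{i-2}b_{i-2}c_{i-2}$ via $c_{i-1}a_{i-1}b_{i-1}=-a_{i-2}b_{i-2}c_{i-2}$, exactly cancelling the $a_{i-2}b_{i-2}c_{i-2}$ coming from $fc_{i-2}$. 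In the boundary cases the same two relations ($b_1a_0=0$ and $c_0a_0=0$ for $i=1$, and $c_1a_1b_1=-a_0b_0c_0$ for $i=2$) produce the cancellation. Note that $b_i a_{i-1}=0$ holds whether the top entry is $a_ib_i$ or $b_i$, so part (4) needs no separate computation.

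For exactness I would take $(w,w')\in\ker\phi$, so $wM=0$ and $w_{r_2}a_{i-1}b_{i-1}+w'c_{i-2}=0$. The hypothesized exactness at $B(\bar{d_2})$ gives $u$ with $w=uA$, whence $w_{r_2}=u_{r_3-1}a_ib_i+u_{r_3}c_{i-1}$. The relations from the complex step then collapse $w_{r_2}a_{i-1}b_{i-1}$ to $-u_{r_3}a_{i-2}b_{i-2}c_{i-2}$, so the second kernel equation becomes $(w'-u_{r_3}a_{i-2}b_{i-2})c_{i-2}=0$. The crux is the annihilator identity $\text{l.ann}_B(c_{i-2})=Bc_{i-3}$ (for $i-2<n$), which produces $q\in B$ with $w'=u_{r_3}a_{i-2}b_{i-2}+qc_{i-3}$; then $\psi(u,q)=(w,w')$, giving $\ker\phi\subseteq\im\psi$, while the reverse inclusion is the complex step. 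For $i=1$ the extra equation holds automatically on $\im(N\star\Gamma_1)$, so $\ker\phi=\ker M=\im\psi$ immediately, and for $i=2$ the reduced equation $(w'-u_{r_3}a_0b_0)c_0=0$ is resolved by the fact that $c_0$ is left regular.

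The hard part will be this exactness step, and inside it the annihilator computation: Lemma \ref{MonomialAnnihilators} records $\text{l.ann}_B(c_n)$ but not $\text{l.ann}_B(c_j)$ for $j<n$, so I would first have to establish $\text{l.ann}_B(c_j)=Bc_{j-1}$ (and $\text{l.ann}_B(c_0)=0$) directly from the canonical basis of Proposition \ref{basis}, by the same monomial analysis used there; this is precisely where the distinction between the interior relation $c_{j+1}a_{j+1}b_{j+1}=-a_jb_jc_j$ and the boundary relations $a_nb_nc_n$ and $c_0a_0$ genuinely matters. A secondary care point is the degree bookkeeping: the hypothesis forces $d_{2,r_2}=d_{3,r_3}+1$, and this is exactly what makes the choice of shift $t$ (hence of $B(t-2),B(t-1),B(t)$) consistent with degree-$0$ maps; the exactness argument itself is independent of the particular value of $t$.
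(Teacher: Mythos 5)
Your argument is correct, and its skeleton matches the paper's: both proofs observe that an element of the kernel of $M\star\Gamma_{i-1}$ has the form $(u(N\star\Gamma_i),\,w')$ by the hypothesized exactness, and both reduce the remaining condition to the single equation $w_{r_2}a_{i-1}b_{i-1}+w'c_{i-2}=0$ via $b_ia_{i-1}=0$; the difference is the lemma used to resolve that equation. The paper feeds the equation $wc_{i-1}a_{i-1}b_{i-1}+xc_{i-2}=0$ directly into its Lemma~\ref{InductionBase}(1), concluding that such a row is a $B$-combination of the last two rows of $N\star\Gamma_i\star\Gamma_{i-2}$, whereas you first rewrite $c_{i-1}a_{i-1}b_{i-1}=-a_{i-2}b_{i-2}c_{i-2}$ to get $(w'-u_{r_3}a_{i-2}b_{i-2})c_{i-2}=0$ and then invoke $\text{l.ann}_B(c_{i-2})=Bc_{i-3}$. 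You are right that this annihilator identity is not recorded in Lemma~\ref{MonomialAnnihilators} and needs proof; your proposed monomial argument works (by Proposition~\ref{Annihilators} the annihilator is monomially generated, and for $1\le j\le n-1$ the only word of $R'$ ending in $c_j$ is the monomial relation $c_{j-1}c_j$, while no $R'$-word ends in $c_0$, so $c_0$ is left regular for $n\ge 1$), but you could also obtain it for free: setting $w=0$ in Lemma~\ref{InductionBase}(1) gives exactly $\text{l.ann}_B(c_{j})\subseteq Bc_{j-1}$ after the same rewriting, so your one-variable identity and the paper's two-variable decomposition are equivalent --- the $w''c_{i-1}$ ambiguity in the paper's lemma is precisely what your correction term $qc_{i-3}$ absorbs. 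What your route buys is transparency about the minimal input (only a left annihilator of a single $c_j$), and notably your explicit use of the left regularity of $c_0$ in the case $i=2$ supplies a step the paper waves off as immediate from Lemma~\ref{MonomialAnnihilators}, which lists $\text{l.ann}_B(a_0b_0)$ but not $\text{l.ann}_B(c_0)=0$; what the paper's route buys is economy, reusing a lemma it needs anyway for the base-case resolutions. Your closing observation that part (4) requires no new computation because $b_ia_{i-1}=0$ kills the top entry whether it is $a_ib_i$ or $b_i$ is exactly the paper's remark that the proof of (4) is the same as that of (3).
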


\begin{proof} The statements (1) and (2) follow immediately from Lemma \ref{MonomialAnnihilators}.
So we assume $i>2$.

For (3), the hypothesis implies that the rows of $N'=N\star \G_i\star
\begin{pmatrix} 0\\  1\\ \end{pmatrix}$ generate the kernel of $\begin{pmatrix} M\\ 0\\ \end{pmatrix}$.   
To compute the kernel of  $M\star\Gamma_{i-1}$, it suffices to determine which elements of the submodule generated by the rows of $N'$ also left annihilate the last column of 
$M\star\Gamma_{i-1}$.

Since $b_{i}a_{i-1}=0$, the last two rows of $N'$ are the only rows not in the kernel of
$M\star \Gamma_{i-1}$. A row  $(\ 0\ \cdots\ 0\ wc_{i-1}\ x\ )$ is in the kernel of 
$M\star \Gamma_{i-1}$ if and only if $wc_{i-1}a_{i-1}b_{i-1}+xc_{i-2}=0$. By Lemma \ref{InductionBase}(1), this occurs if and only if $(\ 0\ \cdots\ 0\ wc_{i-1}\ x\ )$ is a $B$-linear combination of $(\ 0\ \cdots\ 0\ \ c_{i-1}\ \ a_{i-2}b_{i-2}\ )$ and $(\ 0\ \cdots\ 0\ \ 0\ \ c_{i-3}\ )$, which are the last two rows of $N\star\Gamma_i\star\Gamma_{i-2}$. Since all other rows of $N'$ and 
$N\star\Gamma_i\star\Gamma_{i-2}$ are equal, the proof of (3) is complete.

The proof of (4) is the same as the proof of (3).

\end{proof}

To help keep track of the ranks and degree shifts of free modules which appear in later resolutions we introduce the following notations. If $i, j \in \Z$ and $i\le j$, we denote $(i, i+1, \ldots, j-1, j)$ by $[i,j]$. 

If $i$ is even,  for $j\in\N$, let 
$$
\bar{d}_i(j)=
\begin{cases} 
[-\lfloor 3j/2 \rfloor, -j  ]          & j\le i\\
[-\lfloor 3j/2 \rfloor, - \lceil 3j/2\rceil  +i/2]          & j>i\\
\end{cases}.
$$

If $i$ is odd, for $j\in\N$, let 
$$
\bar{d}_i(j)=
\begin{cases} 
[-\lfloor 3j/2\rfloor, -j]	 & j\le i	 \\
[-\lfloor 3j/2\rfloor, -\lfloor 3j/2\rfloor +\lfloor i/2\rfloor]	& j>i	 \\
\end{cases}.
$$

Figure 1 illustrates the combinatorics of the vector $\bar{d}_i(j)$.

\begin{figure}[ht] 
   \centering
   \includegraphics[width=3.5in]{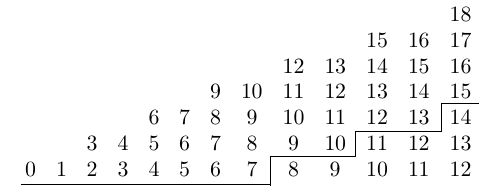} 
   \caption{The degree vector $\bar{d}_i(j)$ consists of negatives of values above the line in column $j$. In this figure $i=7$.}
   \label{fig1}
\end{figure}

We need a bit more notation.
Let $0\le p\le n$ such that $n\equiv p\pmod{2}$. Let
$$
R_{p}=\Gamma_{n}\star_{j=1}^{(n-p)/2}\Gamma_{n-2j} \text{ and }
U_{p-1}=
\begin{pmatrix} 
c_n\\ 
\end{pmatrix}
\star_{j=0}^{(n-p)/2} \Gamma_{n-2j-1}.
$$

\begin{lemma}
\label{ResolutionCn} 
\begin{enumerate}

\item If $n$ is even, the eventually periodic sequence
$$
\cdots 
\xrightarrow{U_1} B(\bar{d}_{n+1}(n+2))
\xrightarrow{R_0} B(\bar{d}_{n+1}(n+1))
\xrightarrow{U_1} B(\bar{d}_{n+1}(n))
\xrightarrow{R_2}
\cdots
$$
$$
\xrightarrow{U_{n-3}}B(\bar{d}_{n+1}(4))
\xrightarrow{R_{n-2}} B(\bar{d}_{n+1}(3))
\xrightarrow{U_{n-1}}B(\bar{d}_{n+1}(2))
\xrightarrow{R_n}B(\bar{d}_{n+1}(1))
$$ 
is a minimal graded free resolution of $Bc_n$.

\item If $n$ is odd, the eventually periodic sequence
$$
\cdots 
\xrightarrow{R_1} B(\bar{d}_{n+1}(n+2))
\xrightarrow{U_0} B(\bar{d}_{n+1}(n+1))
\xrightarrow{R_1} B(\bar{d}_{n+1}(n))
\xrightarrow{U_2} 
\cdots
$$
$$
\xrightarrow{U_{n-3}}B(\bar{d}_{n+1}(4))
\xrightarrow{R_{n-2}} B(\bar{d}_{n+1}(3))
\xrightarrow{U_{n-1}}B(\bar{d}_{n+1}(2))
\xrightarrow{R_n}B(\bar{d}_{n+1}(1))
$$ 
is a minimal graded free resolution of $Bc_n$.
\end{enumerate}
\end{lemma}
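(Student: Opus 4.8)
The plan is to prove both parts of Lemma \ref{ResolutionCn} simultaneously, since the two eventually periodic sequences differ only in which of $R_p,U_{p-1}$ occupies the bottom of the periodic tail, a difference governed entirely by the parity of $n$ through the piecewise formula for $\bar{d}_{n+1}$. Minimality is immediate: every entry of every $R_p$ and every $U_{p-1}$ is a product of generators $a_i,b_i,c_i$, hence lies in $B_+$. So the entire content is exactness. Writing $\phi_k$ for the map $B(\bar{d}_{n+1}(k))\to B(\bar{d}_{n+1}(k-1))$, I would prove the single assertion $H(k)$, \emph{the complex is exact at} $B(\bar{d}_{n+1}(k))$, for all $k\ge 1$, by induction on $k$ in steps of two (thus treating the even and odd chains in parallel).

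The two base cases are already in hand. For $H(1)$ the first differential is $\phi_2=R_n=\Gamma_n=\left({\begin{smallmatrix}a_nb_n\\ c_{n-1}\end{smallmatrix}}\right)$, whose image is $Ba_nb_n+Bc_{n-1}=\text{l.ann}_B(c_n)$ by Lemma \ref{MonomialAnnihilators}; this is exactness at $B(-1)=B(\bar{d}_{n+1}(1))$ relative to the augmentation $c_n\colon B(-1)\to Bc_n$. For $H(2)$ one identifies $\phi_3=U_{n-1}=(c_n)\star\Gamma_{n-1}=\left({\begin{smallmatrix}c_n & a_{n-1}b_{n-1}\\ 0 & c_{n-2}\end{smallmatrix}}\right)$ and $\phi_2=R_n=\Gamma_n$, so that $H(2)$ is exactly the exactness at $B(-3,-2)=B(\bar{d}_{n+1}(2))$ established in Lemma \ref{InductionBase}(3) (which degenerates correctly when $n=1$, where $c_{n-2}=0$).

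For the inductive step I would exploit the self-similarity of the resolution under a shift by $-3$. The defining star products give $R_{p-2}=R_p\star\Gamma_{p-2}$ and $U_{p-3}=U_{p-1}\star\Gamma_{p-3}$, so consecutive like-type maps differ by one appended $\Gamma$; writing $\mu=n-k+3$ for the least $\Gamma$-index occurring in $\phi_{k-1}$, one has $\phi_k=\phi_{k-2}\star\Gamma_{\mu-1}$ and $\phi_{k+1}=\phi_{k-1}\star\Gamma_{\mu-2}$. To deduce $H(k)$ from $H(k-2)$ for $3\le k\le n+2$, I shift the exact triple witnessing $H(k-2)$ by $-3$ (shifting preserves exactness and leaves the matrices $\phi_{k-2},\phi_{k-1}$ unchanged) and feed it into Lemma \ref{InductionStep} with $M=\phi_{k-2}$, $N\star\Gamma_i=\phi_{k-1}$, and $i=\mu$: part (3) when $i>2$ (that is, $k\le n$), part (2) when $i=2$ ($k=n+1$), and part (1) when $i=1$ ($k=n+2$). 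The factorizations above make the two output maps precisely $\phi_k$ and $\phi_{k+1}$, while the summands $B(t),B(t-1),B(t-2)$ supplied by part (3) (with correspondingly fewer summands in parts (2) and (1)) are exactly the new entries enlarging the $-3$-shift of $(\bar{d}_{n+1}(k-1),\bar{d}_{n+1}(k-2),\bar{d}_{n+1}(k-3))$ into $(\bar{d}_{n+1}(k+1),\bar{d}_{n+1}(k),\bar{d}_{n+1}(k-1))$; choosing $t$ to be the new bottom internal degree turns the output into the triple witnessing $H(k)$. This term-by-term matching is precisely the staircase recorded in Figure \ref{fig1}, and the tapering of new summands as $i$ passes $2,1$ is what freezes the number of free summands. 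Part (4) of Lemma \ref{InductionStep} is never invoked, because the resolution of $Bc_n$ is assembled purely from $\Gamma$'s and $(c_n)$.

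Finally, for $k\ge n+3$ the index $\mu=n-k+3$ is nonpositive, and since $\Gamma_j$ acts trivially in the star product for $j<0$, the factorizations collapse to $\phi_k=\phi_{k-2}$, while the formula gives $\bar{d}_{n+1}(k)=\bar{d}_{n+1}(k-2)-(3,\dots,3)$. Hence the triple for $H(k)$ is, up to the global shift by $-3$, identical to the triple for $H(k-2)$, so $H(k)$ follows from $H(k-2)$ with no further computation, bottoming out at the already-proven $H(n+1),H(n+2)$. This is the onset of periodicity: the tail repeats $R_0,U_1$ when $n$ is even and $R_1,U_0$ when $n$ is odd, yielding the two cases of the statement. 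I expect the main obstacle to be none of the homological algebra, which is entirely absorbed by Lemma \ref{InductionStep}, but rather the degree bookkeeping: verifying uniformly in the parity of $n$ that the extra summands delivered by parts (1)--(3) agree entry-by-entry with the piecewise definition of $\bar{d}_{n+1}(k)$, and that the transitions through $i=2$ and $i=1$ glue correctly onto the periodic regime.
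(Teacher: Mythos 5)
Your proposal is correct and follows essentially the same route as the paper's proof: minimality by inspection, base cases from Lemma \ref{MonomialAnnihilators} and Lemma \ref{InductionBase}(3), an induction in steps of two via Lemma \ref{InductionStep}(3) using the factorizations $R_{p-2}=R_p\star\Gamma_{p-2}$ and $U_{p-3}=U_{p-1}\star\Gamma_{p-3}$, parts (2) and (1) at the transitions $i=2,1$, and the periodic tail obtained by the degree shift $\bar{d}_{n+1}(k)=\bar{d}_{n+1}(k-2)-(3,\dots,3)$ (you merely spell out the bookkeeping the paper leaves implicit and treat both parities in parallel where the paper proves (1) and omits (2)). The one point the paper handles separately that your uniform induction silently misses is the degenerate case $n=0$, where Lemma \ref{InductionBase}(3) does not apply (the resolution is the rank-one periodic complex) and one instead quotes $\text{l.ann}_B(c_0)=Ba_0b_0$ and $\text{l.ann}_B(a_0b_0)=Bc_0$ from Lemma \ref{MonomialAnnihilators} directly.
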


\begin{proof}
It is clear that the homomorphisms are degree zero by inspecting the degrees of entries in $U_{p-1}$ and $R_p$. We note that $R_n = \G_n$. Since $\text{l.ann}_B(c_n) = Ba_nb_n + Bc_{n-1}$ we see the resolutions have the correct first terms to be minimal resolutions of $Bc_n$. 

We will prove (1). If $n=0$, the result follows from Lemma 2.3.  So we assume $n>0$ and $n$ is even. Exactness at $B(\bar{d}_{n+1}(2))$ is Lemma 3.3 (3). For exactness at $B(\bar{d}_{n+1}(3))$, we apply Lemma 3.5 with $i = n$, $M = \begin{pmatrix} c_n \\ \end{pmatrix}$ and $N$ the empty matrix. Similarly, exactness at $B(\bar{d}_{n+1}(4))$ follows from exactness at $B(\bar{d}_{n+1}(2))$ by applying Lemma 3.5 with $i = n-1$, $M = R_n$, and $N = (c_n)$. We remark that exactness at $B(\bar{d}_{n+1}(n+1))$ follows from exactness at $B(\bar{d}_{n+1}(n-1))$ by applying Lemma 3.5 (2). Exactness at $B(\bar{d}_{n+1}(n+2))$ follows from exactness at $B(\bar{d}_{n+1}(n))$ by applying Lemma 3.5 (1). By induction, we conclude the sequence in (1) is exact.

The proof of (2) is similar and is omitted.

\end{proof}

The resolution of Lemma \ref{ResolutionCn} is the second piece of our resolution of $_BK$. We denote this complex by $C_{\bullet}$ where $C_1=B(-1)$. 

For integers $i, p, n$ such that $1 \le i \le n$, $0\le p\le i$, and $i\equiv p\pmod{2}$, define
 $$
 S_{i,p-1}=
\begin{pmatrix} 
c_ia_i 
\end{pmatrix} 
\star_{j=0}^{(i-p)/2} \Gamma_{i-2j-1} 
\text{ and }
T_{i,p}=
T_i
\star_{j=1}^{(i-p)/2} \Gamma_{i-2j},
$$
where $T_i = \begin{pmatrix} b_i \\ c_{i-1} \\ \end{pmatrix}$.

\begin{lemma}\ 
\label{ResolutionBi}
\begin{enumerate}

\item If $1\le i\le n$ and $i$ is even, then the eventually periodic sequence
$$
\cdots
\xrightarrow{S_{i,1}} B(-3i/2-1,\bar{d}_{i}(i+1))
\xrightarrow{T_{i,0}} B(\bar{d}_i(i))
\xrightarrow{S_{i,1}}
$$
$$
\cdots
\xrightarrow{S_{i,i-3}} 
B(-4,\bar{d}_i(3))
\xrightarrow{T_{i,i-2}} B(\bar{d}_i(2))
\xrightarrow{S_{i,i-1}} B(-1,\bar{d}_i(1))
$$
is a minimal graded free resolution of $Bb_i+Bc_{i-1}$.

\item If $1\le i\le n$ and $i$ is odd, the eventually periodic sequence
$$
\cdots
\xrightarrow{T_{i,1}} B(\bar{d}_{i}(i+1))
\xrightarrow{S_{i,0}} B(-(3i-1)/2,\bar{d}_i(i))
\xrightarrow{T_{i,1}} B(\bar{d}_{i}(i-1))
\xrightarrow{S_{i,2}}
$$
$$
\cdots
\xrightarrow{S_{i,i-3}} B(-4,\bar{d}_i(3))
\xrightarrow{T_{i,i-2}} B(\bar{d}_i(2))
\xrightarrow{S_{i,i-1}} B(-1,\bar{d}_i(1))
$$
is a minimal graded free resolution of $Bb_i+Bc_{i-1}$.
\end{enumerate}
\end{lemma}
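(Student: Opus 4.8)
The plan is to imitate the proof of Lemma~\ref{ResolutionCn}. The module $Bb_i+Bc_{i-1}$ is generated by the linearly independent degree-one elements $b_i$ and $c_{i-1}$, so its minimal free cover is $B(-1,-1)=B(-1,\bar{d}_i(1))$ with cover map $T_i$; this is the rightmost term of each sequence. Two observations are routine: every map $S_{i,p-1}$ and $T_{i,p}$ is a degree-zero homomorphism, since each entry of these star products is one of the homogeneous elements $c_ia_i$, $a_jb_j$, $c_j$, $b_i$ of the correct degree, and the resolution is minimal because every nonzero entry lies in $B_+$. What remains is to prove exactness at each free module. Exactness at the cover $B(-1,\bar{d}_i(1))$, that is $\ker T_i=\im S_{i,i-1}$, is precisely Lemma~\ref{InductionBase}(2) after a degree shift.

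For the remaining exactness I would run the machinery of Lemma~\ref{InductionStep}. The definitions give the recursions $S_{i,q-2}=S_{i,q}\star\Gamma_{q-2}$ and $T_{i,r-2}=T_{i,r}\star\Gamma_{r-2}$, so any two consecutive maps of the resolution are of the form appearing in the hypothesis of that lemma, with the subscript of the last $\Gamma$-factor dropping by one at each stage. Since one application of Lemma~\ref{InductionStep} enlarges the middle module by a single summand, exactness at a free module is deduced from exactness two places lower, and the argument splits into two interleaved inductions requiring two seeds. The even-indexed modules are reached from the cover, propagating the exactness $\ker T_i=\im S_{i,i-1}$ above by repeated use of Lemma~\ref{InductionStep}(3). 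The odd-indexed modules are reached from exactness at $B(\bar{d}_i(2))$, i.e. $\ker S_{i,i-1}=\im T_{i,i-2}$; I would establish this seed by feeding the annihilator sequence $B(-2,-2)\xrightarrow{T_i}B(-1)\xrightarrow{(c_ia_i)}B(1)$, which is exact because $\ker(c_ia_i)=\text{l.ann}_B(c_ia_i)=Bb_i+Bc_{i-1}=\im T_i$ by Lemma~\ref{MonomialAnnihilators}, into Lemma~\ref{InductionStep}(4) (with $N$ empty, so the left map is $T_i$, and $M=(c_ia_i)$), then shifting. In every general step the outgoing map has a $\Gamma$ as its last factor, so part~(3) applies; part~(4) is needed only to produce the second seed, where the annihilator map $T_i$ has image $Bb_i+Bc_{i-1}$ rather than $\im\Gamma_i$. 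When the appended subscript descends to $2$ and then $1$ the sequence enters its eventually periodic tail; there parts~(2) and~(1) of Lemma~\ref{InductionStep}, which reuse the indices $2$ and $1$ without decreasing them, apply repeatedly and close the periodicity. The even and odd cases of the statement share this entire bottom verbatim and differ only in which map opens the periodic block and in the closed forms of $\bar{d}_i(j)$ for $j>i$, so a single argument handles both.

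The principal difficulty is bookkeeping, not any new idea. At every step I must confirm that the domain and codomain produced by the iterated star product agree with the degree vectors $\bar{d}_i(j)$---including the exceptional leading coordinates $-3i/2-1$ and $-(3i-1)/2$ that appear once $j>i$---and that the degree shift attached to each application of Lemma~\ref{InductionStep} is chosen so that the auxiliary summand $B(t-1)$ carries the correct value of $t$. The genuinely delicate verifications are the boundary cases: the small module indices $i=1,2$, where the relevant $\Gamma$-subscripts are already $\le 2$ and parts~(1),(2) rather than~(3) govern the bottom; the degenerate rows in which $c_{i-2}$, $c_{i-3}$, or $c_{-1}$ vanishes and a row of a star product collapses, which must be compensated exactly by the rank drop encoded in $\bar{d}_i(j)$ for $j>i$; and the case $i=n$, where the identity $\text{l.ann}_B(c_na_n)=Bb_n+Bc_{n-1}$ needed for the second seed is not listed in Lemma~\ref{MonomialAnnihilators} but follows at once from Proposition~\ref{Annihilators}.
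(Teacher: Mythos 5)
Your proposal is correct and is essentially the paper's own (very terse) argument written out in full: the seed at the cover via Lemma \ref{InductionBase}(2), the case $i=1$ and the second seed at $B(\bar{d}_i(2))$ obtained by feeding the annihilator sequence of $c_ia_i$ into Lemma \ref{InductionStep}(4), the two interleaved inductions via part (3), and the periodic tail via parts (1) and (2), exactly mirroring the proof of Lemma \ref{ResolutionCn}. You also correctly flag details the paper elides, notably that $\text{l.ann}_B(c_na_n)=Bb_n+Bc_{n-1}$, needed for the second seed when $i=n$, lies outside the stated range of Lemma \ref{MonomialAnnihilators} but follows from Proposition \ref{Annihilators} (the paper handles $i=1$ by citing Lemma \ref{ResolutionA0} instead, a cosmetic difference from your treatment).
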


\begin{proof} We will prove (2). The case $i=1$ follows immediately from Lemma 3.4. Suppose that $i>2$ and $i$ is odd. Lemma 3.3 (2) shows the first map $S_{i, i-1}$ is the start of a minimal resolution of $Bb_i+Bc_{i-1}$. Exactness in higher degrees follows by induction and the remark in the paragraph following the proof of Lemma 3.5.

The proof of (1) is similar and is omitted.

\end{proof}

For $1\le i\le n$, we denote the complexes of Lemma \ref{ResolutionBi} by $Q^i_{\bullet}$ with $Q^i_1=B(-1,-1).$ Denote by $Q^0_{\bullet}$ the complex $0\rightarrow B(-1)$ which is a minimal graded free resolution of $Bb_0$ by Lemma \ref{MonomialAnnihilators}. We take $Q^0_1=B(-1)$.

Define the chain complex $\widetilde{Q_{\bullet}}$ by
$$
\widetilde{Q_{\bullet}} =
P^0_{\bullet}\oplus P^1_{\bullet}\oplus\cdots\oplus P^n_{\bullet}
\oplus C_{\bullet}
\oplus Q^n_{\bullet}\oplus\cdots\oplus Q^1_{\bullet}\oplus Q^0_{\bullet}.
$$ 
Let $M_1=(\ a_0\ \ a_1\ \cdots\ a_n\ \ c_n\ \ b_n\ \cdots c_0\ \ b_0\ )^T$. Let $M_d$ be the matrix of 
$\widetilde{Q}_d\rightarrow\widetilde{Q}_{d-1}$. Denote by $\hat{M_d}$ the matrix with entries in $T(V)$  such that $(\hat{M_d})_{i,j}=\rho_B((M_d)_{i,j})$.

\begin{thm} 
\label{ResolutionK}
The complex $\widetilde{Q}_{\bullet}\xrightarrow{M_1}B $ is a minimal graded free resolution of the trivial left $B$-module $_BK$
\end{thm}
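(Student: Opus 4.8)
The plan is to assemble the minimal resolution of $_BK$ from the pieces constructed in Lemmas \ref{ResolutionOfAi} through \ref{ResolutionBi} and verify that the differential $M_1$ composed with the differentials of the summands gives the correct exact complex. The trivial module $_BK$ has its minimal resolution beginning $\widetilde{Q_{\bullet}} \xrightarrow{M_1} B \to {_BK} \to 0$, where $M_1$ records the generators $a_0,\dots,a_n,c_n,b_n,\dots,c_0,b_0$ of $B_+ = \ker(B \to {_BK})$. Since $B$ is finitely generated in degree $1$ with basis $X = \{a_i,b_i,c_i\}$, these are precisely the degree-one generators, so exactness at $B$ is immediate. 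First I would confirm that $\ker(M_1)$ decomposes as a direct sum matching the $P^i_\bullet$, $C_\bullet$, and $Q^i_\bullet$ summands: the key point is that $\ker(M_1) = \sum_g l.ann_B(g)\cdot e_g$ where $g$ ranges over the generators, and by Lemma \ref{MonomialAnnihilators} each left annihilator is a sum of principal monomial left ideals $Bm$, each of which is the image of the first map of one of the resolutions $P^i_\bullet$, $C_\bullet$, or $Q^i_\bullet$.

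The heart of the argument is a bookkeeping verification that the summand complexes fit together with no overlap or omission. I would organize this around the generators: $a_i$ is resolved by $P^i_\bullet$ (Lemmas \ref{ResolutionOfAi}, \ref{ResolutionA0}), $c_n$ by $C_\bullet$ (Lemma \ref{ResolutionCn}), and the pair $Bb_i + Bc_{i-1}$ by $Q^i_\bullet$ (Lemma \ref{ResolutionBi}), with $b_0$ resolved by $Q^0_\bullet$. The crucial observation is that the left annihilators in Lemma \ref{MonomialAnnihilators} partition the generators of $\ker(M_1)$ so that every principal left ideal $Bm$ appearing is accounted for exactly once: for instance $l.ann_B(a_i) = Bb_ic_i + Bb_{i+1}$ and $l.ann_B(c_ia_i) = Bb_i + Bc_{i-1}$ interlock, but the $Q^i_\bullet$ resolve exactly the annihilators $Bb_i + Bc_{i-1}$ while the $P^i_\bullet$ resolve the $Ba_i$. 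I would check that the first maps $M_2$ of all summands, assembled as a single matrix into $\widetilde{Q}_2 \to \widetilde{Q}_1$, have image exactly $\ker(M_1)$; this is where the explicit forms of $\Gamma_i$, $T_i$, $S_{i,p-1}$, $T_{i,p}$, $R_p$, $U_{p-1}$ must be seen to cover all of $\ker M_1$.

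Minimality is the easiest part: each summand complex was already shown to be minimal (every matrix entry lies in $B_+$ since all entries are nonconstant monomials in the $a_i,b_i,c_i$), so the direct sum $\widetilde{Q_\bullet}$ is minimal, and $M_1$ itself has entries in $B_+$. Exactness at each $\widetilde{Q}_d$ for $d \ge 2$ then follows immediately from exactness of each summand, since homology of a direct sum is the direct sum of homologies and each $P^i_\bullet$, $C_\bullet$, $Q^i_\bullet$ is exact by the cited lemmas. Thus the only genuine content is exactness at $\widetilde{Q}_1$, i.e. that $\im(M_2) = \ker(M_1)$.

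The main obstacle I anticipate is precisely this verification that $\ker(M_1)$ is the direct sum of the images of the summands' first differentials, with nothing double-counted. The annihilator relations in Lemma \ref{MonomialAnnihilators} are shared across generators (e.g. $b_{i+1}$ appears in $l.ann_B(a_i)$ and $b_i$ appears in $l.ann_B(c_ia_i)$), so one must argue that the specific generators chosen for each summand complex, together, generate $\ker(M_1)$ freely-enough that summing the complexes produces exactly the syzygies with no linear dependence introduced at the $\widetilde Q_1$ level. Concretely, I expect to verify that for each standard generator $g \in \{a_i, c_n, b_i, c_{i-1}, b_0\}$, the portion of $\ker(M_1)$ supported on the coordinate $e_g$ is generated by the image of the corresponding summand's first map, using Lemma \ref{MonomialAnnihilators} to identify the relevant left annihilator and Proposition \ref{Annihilators} to reduce to monomials. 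Once the $d=1$ spot is settled, the proof concludes by invoking exactness of the summands in all higher degrees and noting minimality, so I would present the higher-degree exactness briefly and devote the bulk of the write-up to the $\widetilde Q_1$ computation.
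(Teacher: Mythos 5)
There is a genuine gap at the one place where this theorem has real content: exactness at $\widetilde{Q}_1$. Your structural claim that $\ker(M_1)=\sum_g \text{l.ann}_B(g)\cdot e_g$ is false, because $B$ is not a monomial algebra: the binomial relations $a_ib_ic_i+c_{i+1}a_{i+1}b_{i+1}$ produce genuinely mixed syzygies. For instance $(a_ib_i)c_i+(c_{i+1}a_{i+1})b_{i+1}=0$ in $B$, while $a_ib_ic_i\neq 0$ by Proposition \ref{basis} and $b_{i+1}$ is left regular by Lemma \ref{MonomialAnnihilators}, so this syzygy has no component in any single $\text{l.ann}_B(g)\cdot e_g$. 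Your fallback of checking ``the portion of $\ker(M_1)$ supported on the coordinate $e_g$'' has the same defect: it sees only one-coordinate syzygies. The pairing of $b_i$ with $c_{i-1}$ inside the single block $Q^i_\bullet$ does capture the mixed syzygies \emph{within} that block (Lemma \ref{InductionBase}(2)), but nothing in your outline rules out syzygies mixing coordinates from \emph{different} blocks, e.g.\ between the $a_i$-coordinate of $P^i_1$ and the $c_n$-coordinate of $C_1$. That is precisely the assertion $\im(M_2)=\ker(M_1)$, and, as you yourself note, it does not follow from exactness of the summands (since $M_1$ mixes the blocks), nor from Lemma \ref{MonomialAnnihilators} and Proposition \ref{Annihilators}, which concern annihilators of single monomials. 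You flag this as ``the main obstacle I anticipate,'' but the proposal never supplies an argument overcoming it; a direct canonical-form analysis of an arbitrary syzygy across all $3n+3$ coordinates simultaneously would be a substantial computation, not bookkeeping, and the two-coordinate lemmas of Section 3 were proved only for the specific pairs built into the blocks.

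The paper closes this gap with a standard observation your proposal misses: lift the composite to the tensor algebra. Since the complex is minimal and $\im(M_1)=B_+$, the graded Nakayama lemma reduces $\im(M_2)=\ker(M_1)$ to the statement that the entries of $\hat{M}_2\hat{M}_1$, viewed in $I$, span $I$ modulo $I'=VI+IV$; equivalently, a minimal generating set of $\ker(M_1)$ corresponds to a spanning set of essential relations. A direct inspection of the blocks shows these entries are exactly the defining set $R$: the $P^i$ blocks contribute $b_ic_ia_i$ and $b_{i+1}a_i$ (and $c_0a_0$, $b_1a_0$ from $P^0$), the $C$ block contributes $a_nb_nc_n$ and $c_{n-1}c_n$, and the $Q^i$ blocks contribute $c_ia_ib_i+a_{i-1}b_{i-1}c_{i-1}$ and $c_{i-2}c_{i-1}$. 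Since $R$ generates $I$, exactness at $\widetilde{Q}_1$ follows in two lines; this is the paper's entire argument at that spot. Your remaining points---exactness at $B$ because $\im(M_1)=B_+$, exactness at $\widetilde{Q}_d$ for $d\geq 2$ because homology commutes with direct sums of complexes, and minimality because every matrix entry is a monomial in $B_+$---agree with the paper and are fine; the proof stands or falls on replacing your coordinate-by-coordinate plan with the Nakayama/essential-relations mechanism (or an honest global syzygy computation, which you have not given).
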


\begin{proof}
By Lemmas \ref{ResolutionOfAi}, \ref{ResolutionA0}, \ref{ResolutionCn}, and \ref{ResolutionBi}, it is enough to check exactness at $B$ and at $\widetilde{Q}_1$.
The image of $M_1$ in $B$ is clearly the augmentation ideal $B_+$. The complex is exact at $\widetilde{Q}_1$ since the entries of $\hat{M_2}\hat{M_1}$ give the set $R$ of relations of $B$. 

\end{proof}

Next we show the algebra $B$ is a $\K_2$-algebra using the criterion established by Cassidy and Shelton in \cite{CS}. 
Put $I'=V \tensor I + I \tensor V$. An element in $I$ is an {\it essential relation} if its image is nonzero in $I/I'$. For each $d\ge 2$, let $L_d$ be the image of $\hat{M_d}$ modulo the ideal $T(V)_{\ge 2}$. Let $E_d$ be the image of 
$\hat{M_d}\hat{M}_{d-1}$ modulo $I'$. Finally, let $[L_d : E_d]$ be the matrix obtained by concatenating $L_d$ and $E_d$. 

\begin{thm}[\cite{CS}]
\label{K2criterion}
The algebra $B$ is a $\K_2$-algebra if and only if for all $d>2$, $\widetilde{Q}_d$ is a finitely generated $B$-module and the rows of $[L_d : E_d]$ are independent over $K$.
\end{thm}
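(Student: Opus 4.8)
Although Theorem~\ref{K2criterion} is the criterion of Cassidy and Shelton \cite{CS} applied to the resolution built in Theorem~\ref{ResolutionK}, let me sketch the argument behind it. The plan is to translate the defining property ``$E(B)$ is generated by $E^1(B)$ and $E^2(B)$'' into the asserted independence statement by computing Yoneda products directly from the minimal resolution $\widetilde{Q}_\bullet\to B$. First I would record a purely algebraic reduction: $E(B)$ is generated by $E^1(B)$ and $E^2(B)$ if and only if, for every $d>2$, the Yoneda multiplication
$$
\m_d\colon \bigl(E^1(B)\tensor E^{d-1}(B)\bigr)\oplus\bigl(E^2(B)\tensor E^{d-2}(B)\bigr)\longrightarrow E^d(B)
$$
is surjective. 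The forward implication is immediate, since each monomial of cohomological degree $d\ge 3$ in generators from $E^1(B)\cup E^2(B)$ can be split off its first factor; the reverse is an induction on $d$. The hypothesis that every $\widetilde{Q}_d$ is finitely generated ensures each $E^d(B)$ is finite dimensional, so that $\m_d$ and its transpose behave well.

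Next I would compute $\m_d$ from the resolution. Minimality forces the differentials of $\Hom_B(\widetilde{Q}_\bullet,K)$ to vanish, so $E^d(B)=\Hom_B(\widetilde{Q}_d,K)\cong K^{r_d}$ with $r_d=\rank\widetilde{Q}_d$ and dual basis indexed by the free generators of $\widetilde{Q}_d$. Under the standard identifications $E^1(B)\cong V^*$ and $E^2(B)\cong (I/I')^*$ --- the latter because $I'=V\tensor I+I\tensor V$ coincides with $T(V)_+I+IT(V)_+$, so $I/I'$ is the space of minimal (essential) relations --- the two pieces of $\m_d$ are computed by comparison maps lifting multiplication. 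The product against $E^1(B)$ is governed by the linear strand of the differential, whose matrix in these bases is precisely $L_d$, the degree-one part of $\hat{M_d}$. The product against $E^2(B)$ is governed by the composite of two consecutive lifts: the entries of $\hat{M_d}\hat{M}_{d-1}$ lie in $I$ because $M_dM_{d-1}=0$ in $B$, and their classes modulo $I'$ record the pairing against essential relations, giving the matrix $E_d$. Hence $\m_d$ is represented by $[L_d:E_d]$.

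Finally I would dualize. Identifying $E^d(B)^*$ with $K^{r_d}$ spanned by the generators of $\widetilde{Q}_d$, the transpose $\m_d^*\colon K^{r_d}\to V^{r_{d-1}}\oplus (I/I')^{r_{d-2}}$ sends the $i$-th generator to the $i$-th row of $[L_d:E_d]$. Thus $\m_d$ is surjective if and only if $\m_d^*$ is injective, that is, if and only if the $r_d$ rows of $[L_d:E_d]$ are linearly independent over $K$. Combined with the reduction of the first paragraph, this yields the stated equivalence.

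The main obstacle is the identification of the $E^2(B)$-multiplication with $\hat{M_d}\hat{M}_{d-1}$ modulo $I'$. Unlike the $E^1(B)$-action, which is the familiar linear strand, this requires building the degree-two comparison maps explicitly and checking that the Yoneda product against a functional in $(I/I')^*$ is computed by the essential part of the double composite, independently of the choices of lifts $\hat{M_d}$. Verifying this independence, together with the identification $E^2(B)^*\cong I/I'$, is the technical core of the criterion; the remaining bookkeeping is linear algebra.
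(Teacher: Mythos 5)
The paper offers no proof of this statement --- it is imported verbatim from Cassidy--Shelton \cite{CS} --- and your sketch reconstructs the source's argument in its essentials: reduce generation by $E^1$ and $E^2$ to degreewise surjectivity of $E^1\tensor E^{d-1}\oplus E^2\tensor E^{d-2}\to E^d$, compute these pairings from the minimal resolution (the linear strand of $\hat{M}_d$ for the $E^1$-action, the essential part of $\hat{M}_d\hat{M}_{d-1}$ for the $E^2$-action via $E^2\cong (I/I')^*$), and dualize so that surjectivity becomes $K$-independence of the rows of $[L_d:E_d]$. Two small remarks: the well-definedness you single out as the technical core is in fact immediate from minimality, since the entries of $\hat{M}_d$ and $\hat{M}_{d-1}$ lie in $T(V)_+$, so changing either lift by elements of $I$ perturbs the product only by terms in $T(V)_+I+IT(V)_+=I'$; and in this paper's application the finite-generation clause carries no content, as $\widetilde{Q}_d$ is finitely generated by the explicit construction of Theorem \ref{ResolutionK}, so your choice to treat finite generation as a standing hypothesis rather than derive it from the $\K_2$ property is harmless here.
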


\begin{thm} 
\label{K2}
For any $n \in \N$, $B$ is a $\K_2$-algebra.
\end{thm}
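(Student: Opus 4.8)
The plan is to verify the Cassidy--Shelton criterion of Theorem \ref{K2criterion} for the minimal resolution $\widetilde{Q}_\bullet$ produced in Theorem \ref{ResolutionK}. For each $d>2$ two things must be checked: that $\widetilde{Q}_d$ is finitely generated, and that the rows of $[L_d:E_d]$ are $K$-linearly independent. The first condition is immediate, since $\widetilde{Q}_d$ is a finite direct sum of free modules $B(\bar{d})$ and inspection of the vectors $\bar{d}_i(j)$ shows each has finite (indeed eventually constant) length, so $\widetilde{Q}_d$ has finite rank. All the content is therefore in the row-independence statement.

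The crucial structural observation is that $\widetilde{Q}_\bullet$ is, by construction, the direct sum of the complexes $P^0_\bullet,\dots,P^n_\bullet$, $C_\bullet$, and $Q^n_\bullet,\dots,Q^0_\bullet$. Hence for $d\ge 2$ each differential $M_d$, each lift $\hat{M}_d$, and the linear part $L_d$ are block diagonal with respect to this decomposition, and for $d>2$ so is $E_d=\hat{M}_d\hat{M}_{d-1}\bmod I'$, being a product of two block-diagonal matrices. Distinct summand complexes occupy disjoint column positions of $\widetilde{Q}_{d-1}$ and $\widetilde{Q}_{d-2}$, so after flattening $L_d$ over the generators $\{a_i,b_i,c_i\}$ and $E_d$ over a basis of essential relations, the columns arising from different summands are disjoint even when two summands happen to produce the same relation. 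Thus $[L_d:E_d]$ is block diagonal and it suffices to prove row-independence within each summand complex separately.

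Within a fixed summand every entry of every $M_d$ is homogeneous of degree $1$ or $2$, as one sees from the blocks $\Gamma_i$, $T_i$, $(c_ia_i)$, $(c_n)$, the explicit $P^i_\bullet$ matrices, and the definition of the star product. I would then split the rows (the generators of $\widetilde{Q}_d$) into \emph{linear} rows, whose linear part is nonzero, and \emph{quadratic} rows, all of whose entries have degree $2$. Examining the star-product matrices shows that in each degree a summand has at most one quadratic row---the top row of an $R_p$-, $S_{i,p-1}$-, or $(b_ic_i)$-type matrix (together with the single rows $(a_0b_0\ c_1a_1)$ and $(b_nc_n)$ in $P^0_\bullet$, $P^n_\bullet$)---while every remaining row is linear and carries its distinguished generator in a distinct flattened column, so the linear parts of the linear rows are automatically $K$-independent. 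A vanishing $K$-combination of the rows then projects, via $L_d$, to a vanishing combination of the linear rows, forcing their coefficients to be zero; what remains is a scalar multiple of the single quadratic row, whose coefficient must vanish as soon as its $E_d$-entry is nonzero.

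The main obstacle, and the only genuinely computational step, is therefore to show that the unique quadratic row of each relevant matrix yields a nonzero element of $I/I'$. This is done by computing the relevant entries of $\hat{M}_d\hat{M}_{d-1}$ and reducing modulo $I'=V\tensor I+I\tensor V$. For example, the quadratic row of an $S$-type matrix times the adjacent $T$-type matrix produces $c_ia_ib_i+a_{i-1}b_{i-1}c_{i-1}$ together with a cross term $a_{i-1}b_{i-1}a_{i-2}b_{i-2}$; the former is exactly the defining relation $a_{i-1}b_{i-1}c_{i-1}+c_ia_ib_i\in R$, while the latter contains the quadratic relation $b_{i-1}a_{i-2}$ and hence lies in $I'$. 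The same mechanism governs every case: the cross terms always contain one of the quadratic relations $b_{i+1}a_i$, $c_ic_{i+1}$, or $c_0a_0$ and so vanish modulo $I'$, leaving a single defining relation from $R$---for instance $a_nb_nc_n$ from the $R_p$-rows of $C_\bullet$ and $b_ic_ia_i$ from the $(b_ic_i)$-rows of $P^i_\bullet$. Since each such entry is nonzero in $I/I'$, the quadratic coefficient is forced to zero, row-independence holds in every block and every degree $d>2$, and Theorem \ref{K2criterion} gives that $B$ is a $\K_2$-algebra.
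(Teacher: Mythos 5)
Your proof is correct and takes essentially the same route as the paper's: reduce row-independence of $[L_d:E_d]$ to the individual blocks of $\hat{M}_d$, note that every block has at most one row with no linear entry (the shared first column $(b_i\ \ c_{i-1})^T$ of $T_{i,p}$ being handled by linear independence of the generators), and verify that each such row contributes a nonzero essential relation ($a_nb_nc_n$, $b_ic_ia_i$, $c_ia_ib_i+a_{i-1}b_{i-1}c_{i-1}$, $c_1a_1b_1+a_0b_0c_0$) to $E_d$, the cross terms vanishing modulo $I'$ because they contain a quadratic relation. Your explicit justification of the block-diagonal reduction and of the cross-term computation merely spells out steps the paper leaves implicit.
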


\begin{proof} 
Let $d > 2$. It is clear that $\widetilde{Q}_d$ is finitely generated. To see that the rows of $[L_d : E_d]$ are independent, it suffices to check the condition on the blocks of $\hat{M}_d$ and $\hat{M}_{d-1}$. The blocks $U_p$ and $T_{i,p}$ have exactly one linear term in each row and no two are in the same column except for the upper left corner of $T_{i, p}$ which is $\begin{pmatrix} b_i  & c_{i-1} \end{pmatrix}^T$ and $b_i, c_{i-1}$ are independent over $K$. So the condition holds for these blocks. 

The blocks  $(\ a_n\ )$, $(\ a_i\ \ c_{i+1}a_{i+1}\ )$, and $(\ b_1\ \ c_0\ )^T$  contain linear terms in each row and the rows are independent, so they satisfy the condition. The block 
$(\ c_1a_1\ \ a_0b_0\ )$ does not contain a linear term, but the corresponding block of $E_d$ is the essential relation $c_1a_1b_1+a_0b_0c_0$. The blocks $(\ b_ic_i\ \ b_{i+1}\ )^T$ have a linear term in the second row, and the first row of the corresponding block of $E_d$ contains the essential relation $b_ic_ia_i$. 

The blocks $S_{i,p}$ and $R_p$ have one linear term in each row except the first, and no two are in the same column, so it is enough to check that the first row of the corresponding block of $E_d$ is nonzero. A direct calculation shows that respectively the rows contain the essential relations
$c_ia_ib_i+a_{i-1}b_{i-1}c_{i-1}$ and $a_nb_nc_n$.

\end{proof}

\section{$A_{\infty}$-algebra structures from resolutions}

Let $A$ be a $K$-algebra and let $(Q_{\bullet},d_{\bullet})$ be a minimal graded projective resolution of $_AK$ by $A$-modules with $Q_0=A$. For $n\in\Z$, a {\it degree $n$ endomorphism} of $(Q_{\bullet},d_{\bullet})$ is a collection of degree zero homomorphisms of graded $A$-modules $\{f_j:Q_j\rightarrow Q_{j+n}\ |\ j\in\Z \}$. Note that $f_j=0$ for $j<\max\{0,-n\}$.

Let $U=\End_A(Q_{\bullet})$ be the differential graded endomorphism algebra of $(Q_{\bullet},d_{\bullet})$ with multiplication given by composition.  We denote the maps in $U$ of degree $-n$ by $U^{n}$ for all $n \in \Z$. 
The differential $\partial$ on $U$ is given on homogeneous elements by $\partial(f) = df-(-1)^{|f|}fd$, where $|f|$ denotes the degree of $f$.  With respect to the endomorphism degree, $\deg(\partial) = 1$. We let $B^{n}$ and $Z^{n}$ respectively denote the set of coboundaries and the set of cocycles in $U^n$. 

\begin{lemma}
\label{Lifting}
Let $g\in Z^n$. If $\im(g_n)\subset (Q_0)_+$, then there exists $f\in U^{n-1}$ with $f_j=0$ for all $j<n$ such that $\partial(f)=g$.
\end{lemma}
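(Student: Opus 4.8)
The plan is to construct $f$ one homological component at a time, solving $\partial(f)=g$ by induction on homological degree. Writing $g=\{g_j:Q_j\to Q_{j-n}\}$ and seeking $f=\{f_j:Q_j\to Q_{j-n+1}\}$, the equation $\partial(f)=g$ decomposes, in each homological degree $j$, into a relation of the form $d_{j-n+1}f_j = g_j \pm f_{j-1}d_j$. Since $Q_{j-n}=0$ for $j<n$, we automatically have $g_j=0$ there, so it is consistent to set $f_j=0$ for $j<n$ and to build the $f_j$ starting from $j=n$. At each stage the task is to lift a known map landing in $Q_{j-n}$ through $d_{j-n+1}$; the two ingredients that make every lift possible are the projectivity of $Q_j$ and the identification $\im(d_{j-n+1})=\ker(d_{j-n})$ coming from exactness of the resolution.

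For the base case $j=n$ I would solve $d_1 f_n = g_n$ (note $f_{n-1}=0$). Here $d_1:Q_1\to Q_0=A$ has image $A_+=(Q_0)_+$, since exactness of $Q_1\to Q_0\to K\to 0$ forces $\im(d_1)=\ker(A\to K)=A_+$. The hypothesis $\im(g_n)\subset (Q_0)_+$ is exactly what guarantees $\im(g_n)\subseteq\im(d_1)$, so projectivity of $Q_n$ produces a degree zero graded homomorphism $f_n:Q_n\to Q_1$ with $d_1 f_n = g_n$. This is the one and only place the hypothesis on $g_n$ is used.

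For the inductive step, assume $f_n,\dots,f_{j-1}$ have been constructed so that $\partial(f)$ agrees with $g$ in all degrees below $j$, with $j>n$. I set $h_j = g_j \pm f_{j-1}d_j : Q_j\to Q_{j-n}$ and seek $f_j$ with $d_{j-n+1}f_j = h_j$. Because $Q_j$ is projective and $d_{j-n+1}$ surjects onto $\ker(d_{j-n})$ by exactness (valid since $j-n\ge 1$), such a lift exists as soon as $\im(h_j)\subseteq\ker(d_{j-n})$, that is, as soon as $d_{j-n}h_j=0$. Verifying this vanishing is the heart of the argument: expanding $d_{j-n}h_j$ and substituting the cocycle identity $d_{j-n}g_j = \pm g_{j-1}d_j$ (which is precisely $\partial(g)=0$ in degree $j$) together with the inductive relation $d_{j-n}f_{j-1} = g_{j-1}\pm f_{j-2}d_{j-1}$, every surviving term becomes proportional to $g_{j-1}d_j$, while the complex identity $d_{j-1}d_j=0$ annihilates the remaining term.

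The main obstacle is the sign bookkeeping in that last cancellation: one must check that the sign in the Koszul-type differential $\partial(f)=df-(-1)^{|f|}fd$, for $f$ of degree $-(n-1)$, is exactly the negative of the sign $(-1)^{|g|}$ appearing in the cocycle identity for $g$ of degree $-n$, so that the two copies of $g_{j-1}d_j$ cancel and $d_{j-n}h_j=0$. Granting that cancellation, the lift $f_j$ exists and the induction proceeds; choosing every lift in the graded category (lifting homogeneous generators to preimages of equal internal degree) makes each $f_j$ a degree zero homomorphism. This yields the desired $f\in U^{n-1}$ with $f_j=0$ for $j<n$ and $\partial(f)=g$.
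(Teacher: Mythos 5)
Your proposal is correct and follows essentially the same route as the paper's proof: setting $f_j=0$ for $j<n$, using the hypothesis $\im(g_n)\subset (Q_0)_+=\im(d_1)$ only in the base case, and then inductively lifting $h_j = g_j + (-1)^{n-1}f_{j-1}d_j$ through $d_{j-n+1}$ via projectivity and exactness. The sign cancellation you flag does hold under the paper's conventions, since $f\in U^{n-1}$ has $(-1)^{|f|}=(-1)^{n-1}=-(-1)^{|g|}$, whence $d_{j-n}h_j = dg_j + (-1)^{n-1}g_{j-1}d_j = (-1)^{n}g_{j-1}d_j + (-1)^{n-1}g_{j-1}d_j = 0$ exactly as in the paper's displayed computation.
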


\begin{proof}
We define $f$ inductively. Put $f_j=0$ for all $j<n$. Since $Q_{\bullet}$ is a resolution, $\im(d_1)=(Q_0)_+$. Hence $\im(g_n)\subset\im(d_1)$. By graded projectivity of $Q_n$, there exists a degree zero homomorphism of graded $A$-modules $f_n:Q_n\rightarrow Q_1$ such that $g_n=d_1f_n = d_1f_n-(-1)^{n-1}f_{n-1}d_n.$

Fix $j>n$ and assume that for all $k< j$, $f_k$ is defined and $df_k=(-1)^{n-1}f_{k-1}d+g_k$. Then 
\begin{align*}
d[(-1)^{n-1}f_{j-1}d+g_{j}] &= (-1)^{n-1}(df_{j-1} d) + dg_{j}\\
   &= (-1)^{n-1}((-1)^{n-1}f_{j-2}d+g_{j-1})d + dg_{j}\\
   &= (-1)^{n-1}g_{j-1}d + dg_{j} \\
   &= \partial(g)_{j} \\
   &= 0.
\end{align*}	    
Since $Q_{\bullet}$ is a resolution, $\im((-1)^{n-1}f_{j-1}d+g_{j}) \subseteq \im(d_{j+1-n})$. Thus, by graded projectivity of $Q_j$, there exists a degree zero homomorphism of graded $A$-modules $f_{j}:Q_j\rightarrow Q_{j+1-n}$ such that $df_j = (-1)^{n-1}f_{j-1}d+g_j$.

By induction, there exists $f \in U^{n-1}$ such that ${\partial(f) = g}$.

\end{proof}

 The minimality of the resolution implies that $\Hom_A(Q_{\bullet},K)$ is equal to its cohomology. We take $H^*(\Hom_A(Q_{\bullet},K))=\Hom_A(Q_{\bullet},K)$ as our model for the Yoneda algebra $E(A)$. We abuse notation slightly and write $E(A)=\Hom_A(Q_{\bullet},K)$. It will be convenient to view $E(A)$ as a differential graded algebra with trivial differential.

Let $\e:Q_0\rightarrow K$ be the augmentation homomorphism. For every ${n\in\Z}$ define a map $\phi^n:U^n\to \Hom^n_A(Q_{\bullet},K)$ by $\phi^n(f)= \e f_n$. We remark that the map $\phi=\oplus_n \phi^n$ induces a surjective homomorphism $\Phi: H^*(U) \to E(A)$. Using Lemma \ref{Lifting}, it is easy to prove that $\Phi$ is also injective.

We compute the structure of an $A_{\infty}$-algebra on $E(A)$ by specifying the data of a strong deformation retraction (SDR-data) from $U$ to $E(A)$. More precisely, we choose maps 
$i:E(A)\rightarrow U$, $p:U\rightarrow E(A)$, and ${G:U\rightarrow U}$ such that $i$ and $p$ are degree 0 morphisms of complexes and $G$ is a homogeneous $K$-linear map of degree $-1$ such that $pi=1_{E(A)}$ and $1_U-ip=\partial G-G\partial$.

We define a family of homogenous $K$-linear maps $\{\lambda_j : U^{\tensor j} \to U\}_{j \geq 2}$ with $\deg(\l_j) = 2-j$ as follows. There is no map $\lambda_1$, but we define the formal symbol $G\lambda_1$ to mean $-1_U$. The map $\lambda_2$ is the multiplication on $U$. For $n \geq 3$, the map $\l_j$ is defined recursively by
$$\l_j = \l_2 \sum_{ \substack{s+t = j \\
s, t \geq 1}}
(-1)^{s+1}(G\l_s \tensor G\l_t).$$

The following theorem due to Merkulov provides an $A_{\infty}$-structure on $E(A)$. Merkulov's theorem applies to the subcomplex $i(E(A))$ of $U$ and gives structure maps $m_j=ip\lambda_j$.
Since $pi=1_{E(A)}$, we may restate the theorem for $E(A)$ as follows.

\begin{thm}[\cite{Merkulov}] 
\label{Merkulov}
Let $m_1=0$ and for $j \geq 2$, let $m_j = p \l_j i$. Then $(E(A), m_1, m_2, m_3, \ldots)$ is an $A_{\infty}$-algebra satisfying the conditions of Theorem \ref{Kadeishvili}.
\end{thm}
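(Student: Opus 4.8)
The statement is Merkulov's theorem, so the plan is to verify that the chosen SDR-data $(i,p,G)$ drives his transfer construction and then to check the two normalizations required by Theorem \ref{Kadeishvili}. Concretely, I would prove that the maps $m_j = p\lambda_j i$ satisfy every Stasheff identity $\mathbf{SI}(n)$. Two structural facts make the computation tractable. First, $E(A) = \Hom_A(Q_\bullet,K)$ carries the trivial differential and $i$, $p$ are morphisms of complexes, so $\partial i = 0$ and $p\partial = 0$; together with $pi = 1_{E(A)}$ this forces $m_1 = 0$ and, more importantly, lets me annihilate every term that still carries a free $\partial$ on an outer input. Second, because $\lambda_2$ is the multiplication of the differential graded algebra $U$, the graded Leibniz rule is exactly the assertion that $\lambda_2$ is a cocycle in the Hom-complex, i.e. $\partial\lambda_2 = \lambda_2\partial_{\otimes}$, and composition in $U$ is associative.

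The core of the argument is an inductive identity for the $\lambda_n$ inside $U$, proved by induction on $n$ from the recursion
\[
\lambda_n = \lambda_2\sum_{\substack{s+t=n\\ s,t\ge 1}}(-1)^{s+1}(G\lambda_s\tensor G\lambda_t),\qquad G\lambda_1:=-1_U.
\]
I would apply the Hom-complex differential $\partial$ to this expression, move it through $\lambda_2$ (a cocycle), distribute it across the two tensor factors by Leibniz, and then rewrite each $\partial(G\lambda_s)$ using the homotopy relation $\partial G - G\partial = 1_U - ip$ and the inductive formula for $\partial\lambda_s$. After regrouping, this yields Merkulov's master relation, which expresses $\partial\lambda_n$ as the signed Stasheff combination $\sum(-1)^{r+st}\lambda_{r+1+t}(1^{\tensor r}\tensor\lambda_s\tensor 1^{\tensor t})$ corrected by terms each carrying an explicit factor of $(1_U - ip)$ and by terms that still carry a $\partial$ on an outer slot.

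To finish I would sandwich this identity between $p$ on the left and $i^{\tensor n}$ on the right. The entire left-hand side dies because $p\partial = 0$, and every correction term in which the retraction acts through a bare $\partial$ dies because $\partial i = 0$. In the surviving Stasheff combination the ``identity'' part of each $(1_U-ip)$ cancels in pairs by associativity of $\lambda_2$ (exactly as in the case $n=3$, where $\lambda_2(1\tensor\lambda_2)=\lambda_2(\lambda_2\tensor 1)$ collapses $\mathbf{SI}(3)$ to associativity of $m_2$), while the ``$ip$'' part inserts $ip$ on each internal edge, converting $p\lambda_{r+1+t}(1^{\tensor r}\tensor\lambda_s\tensor 1^{\tensor t})i^{\tensor n}$ into $m_{r+1+t}(1^{\tensor r}\tensor m_s\tensor 1^{\tensor t})$ via $pi = 1_{E(A)}$. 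What remains is precisely $\mathbf{SI}(n)=0$ for the $m_j$.

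The main obstacle is the sign bookkeeping in the middle step: tracking the Koszul signs as $\partial$ is pushed past $\lambda_2$, the tensor factors, and $G$ (whose degree $-1$ interacts delicately with the stated homotopy relation), and verifying that the non-$ip$ corrections cancel for general $n$ and not merely for small $n$. This is cleanest to organize by the same induction on $n$ that produces the master relation. Finally, the conditions of Theorem \ref{Kadeishvili} are met: $m_1=0$ holds by construction; $m_2 = p\lambda_2 i$ is induced by the multiplication of $U$, hence is the Yoneda product under the isomorphism $\Phi$ obtained via Lemma \ref{Lifting}; and the components $f_1 = i$ together with the higher components built from $G\lambda_n\, i^{\tensor n}$ assemble into an $A_{\infty}$-morphism $E(A)\to U$ whose linear term $i$ induces $\Phi^{-1}$ on cohomology, giving the required quasi-isomorphism lifting the identity.
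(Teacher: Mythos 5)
The paper does not prove this theorem: it is imported verbatim from \cite{Merkulov}, and the only argument supplied in Section 4 is the one-sentence reduction preceding the statement --- Merkulov's construction gives structure maps $ip\lambda_j$ on the subcomplex $i(E(A))\subset U$, and since $pi=1_{E(A)}$ these transport to $m_j=p\lambda_j i$ on $E(A)$ itself. So there is no in-paper proof to match; what you have done is reconstruct Merkulov's own argument, and your reconstruction is correct in outline. The induction you describe --- apply $\partial$ to the recursion defining $\lambda_n$, use that $\lambda_2$ is a cocycle in the Hom-complex (the Leibniz rule) to push $\partial$ onto the tensor factors, rewrite $\partial G\lambda_s$ via the homotopy relation as $\lambda_s - ip\lambda_s - G\partial\lambda_s$, cancel the bare-$\lambda_s$ terms in pairs using associativity of $\lambda_2$, recurse on the $G\partial\lambda_s$ terms, and retain the $ip$-insertions --- is precisely the computation in Merkulov's paper, and sandwiching the resulting master relation between $p$ and $i^{\tensor n}$ (killing the stray differentials via $p\partial=0$ and $\partial i=0$, and converting each $ip$-insertion into a nested multiplication) does produce $\mathbf{SI}(n)$ for the $m_j$. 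You are also right, implicitly, that the side conditions $G^2=0$, $Gi=0$, $pG=0$ are not needed for this part; the paper arranges them for its particular $G$, but Merkulov's theorem does not assume them.

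Two caveats before this could be called a complete proof. First, as you concede, essentially all of the content lies in the sign bookkeeping of the master relation, which you do not carry out; note in this connection that the paper itself is inconsistent about the homotopy convention (it writes $1_U-ip=\partial G-G\partial$ when introducing the SDR-data but $1_U-ip=\partial G+G\partial$ at the end of Section 4, the latter being the standard convention for a degree $-1$ operator), so a completed write-up must fix one convention and carry it through the induction. Second, the conditions of Theorem \ref{Kadeishvili} include a quasi-isomorphism $E(A)\to U$ lifting the identity; your final claim that $f_1=i$ together with higher components built from $G\lambda_n i^{\tensor n}$ assembles into an $A_\infty$-morphism is true, but it is a separate induction (the morphism identities, with their own sign delicacies), established in \cite{Merkulov} and in Kontsevich--Soibelman, not a formal consequence of the Stasheff identities you verified. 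As a blind reconstruction of the cited proof, the proposal is sound; as written it is a faithful roadmap rather than a proof.
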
	  

There are many choices for $i$ and $G$. We describe a method for defining $i$ and $G$ based on a minimal projective resolution $(Q_{\bullet}, d_{\bullet})$ of the trivial module $_AK$. Our $G$ will additionally satisfy the \emph{side conditions} $G^2=0$, $Gi=0$, and $pG=0$. As observed in \cite{GugLam1}, initial SDR-data can always be altered to satisfy these conditions, but they are corollaries of our construction. We define $p=\phi$.

Next, we define the inclusion map $i$. Assume that a homogeneous $A$-basis $\{u_n^\ell\}$ has been fixed for each ${Q}_n$ and let $\{\m_n^\ell\}$ be the graded dual basis for $E^n(A)$. 

We choose $i: E(A) \to U$ as follows. For $n\ge 0$ let $\m\in E^n(A)$ be a dual basis vector. By graded projectivity, there exists a sequence of degree zero $A$-module homomorphisms $\{i(\m)_k\}_{k \geq n}$ so that the diagram

\begin{diagram} 
&& Q_{n+1} & \rTo &  Q_n & & \\
\cdots&& \dTo^{i(\m)_{n+1}} & & \dTo^{i(\m)_n} & \rdTo^{\m} & \\
&& Q_1 & \rTo & Q_0 & \rTo_{\e} & k \\
\end{diagram}
has commuting squares when $n$ is even and anticommuting squares when $n$ is odd. This ensures that $i(\m)$ is contained in $Z^n$. We choose the map $i(\m)_n$ to be represented by a matrix with scalar entries in the given $A$-basis. We extend $i$ to $E(A)$ by $K$-linearity, and we note that $p i=1_{E(A)}$.

For each integer $n$ define the $K$-vector space $H^n$ by $H^n = i(E^n(A))$ if $n \geq 0$ and $H^n = 0$ if $n < 0$.

\begin{prop} 
\label{BHsplit}
For every $n\in\Z$, $Z^n = B^n \oplus H^n$.
\end{prop}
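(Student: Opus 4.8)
The plan is to split into the cases $n<0$ and $n\ge 0$ and, in the nonnegative case, to verify the two defining properties of an internal direct sum separately, namely $B^n\cap H^n=0$ and $B^n+H^n=Z^n$. The only inputs I will need are: $p=\phi$ is a chain map that annihilates coboundaries (the identity $\phi\partial=0$ is exactly what lets $\phi$ descend to $\Phi$ on cohomology); the section identity $pi=1_{E(A)}$; the containment $i(E^n(A))\subseteq Z^n$, which is built into the definition of $i$ by the (anti)commutativity of the defining diagram; and the Lifting Lemma (Lemma \ref{Lifting}), which carries the real weight of the argument.

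For $n<0$ there is nothing to decompose: $H^n=0$ by definition, and for any $g\in Z^n$ the component $g_n\colon Q_n\to Q_0$ is the zero map since $Q_n=0$. Thus $\im(g_n)\subseteq (Q_0)_+$ holds trivially, and Lemma \ref{Lifting} produces an $f\in U^{n-1}$ with $\partial(f)=g$, giving $Z^n=B^n=B^n\oplus H^n$. Now fix $n\ge 0$ and treat the intersection. Because $\phi\partial=0$, the map $p$ annihilates $B^n$. If $x\in B^n\cap H^n$, write $x=i(\m)$ with $\m\in E^n(A)$; applying $p$ and using $pi=1_{E(A)}$ yields $\m=p(i(\m))=p(x)=0$, so $x=i(0)=0$ and hence $B^n\cap H^n=0$.

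The spanning step is where Lemma \ref{Lifting} is essential. Given $g\in Z^n$, set $\m=p(g)\in E^n(A)$ and $h=g-i(\m)$. Then $h\in Z^n$ since $i(\m)\in Z^n$, and $p(h)=p(g)-p(i(\m))=\m-\m=0$ by $pi=1_{E(A)}$. Because $p=\phi$ and $\phi^n(h)=\e h_n$, the vanishing $\e h_n=0$ says precisely that $\im(h_n)\subseteq\ker\e=(Q_0)_+$. Lemma \ref{Lifting} then supplies $f\in U^{n-1}$ with $\partial(f)=h$, so $h\in B^n$ and $g=h+i(\m)\in B^n+H^n$. Combining the cases establishes $Z^n=B^n\oplus H^n$ for every $n\in\Z$.

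I expect the only genuine content to be this spanning step, and within it the verification that $\im(h_n)\subseteq(Q_0)_+$ so that Lemma \ref{Lifting} applies; everything else is bookkeeping with $pi=1_{E(A)}$ and $\phi\partial=0$. One could alternatively phrase the whole proof as the statement that $\Phi$ is an isomorphism and $i$ is a $K$-linear section of the quotient $Z^n\to H^n(U)$, but the direct appeal to Lemma \ref{Lifting} is cleaner since it avoids invoking injectivity of $\Phi$ as a separate ingredient and in fact reproves it along the way.
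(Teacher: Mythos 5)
Your proof is correct and takes essentially the same route as the paper: the spanning step is exactly the paper's decomposition $g=(g-ip(g))+ip(g)$ followed by Lemma \ref{Lifting}, and you make explicit the hypothesis check the paper leaves implicit, namely that $p(g-ip(g))=0$ forces $\im\bigl((g-ip(g))_n\bigr)\subseteq\ker\e=(Q_0)_+$. The only cosmetic difference is the intersection step, where you use $p(B^n)=0$ (i.e.\ $\phi\partial=0$, which rests on minimality) together with $pi=1_{E(A)}$, whereas the paper argues directly that a coboundary's degree-$n$ component has image in $(Q_0)_+$ and hence cannot be represented by a matrix with nonzero scalar entries --- the same minimality argument in different clothing.
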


\begin{proof} Let $n \in \Z$ and let $g \in B^n$. Then there exists $f \in U^{n-1}$ such that $\partial(f) = g$. The minimality condition ensures that $\im(g_n) = \im(\partial(f)_n)$ is contained in $A_+Q_0 = (Q_0)_+ $. Thus, $g_n$ cannot be represented by a matrix with nonzero scalar entries. We conclude that $B^n \cap H^n = 0$.

Now, let $f \in Z^n$. By definition, $ip(f) \in H^n$. To prove $Z^n = B^n \oplus H^n$,
 it suffices to show there exists $g \in U^{n-1}$ such that $\partial(g) = f-ip(f)$. This follows from Lemma \ref{Lifting}. 

\end{proof}

Finally, we define a homogeneous $K$-linear map $G: U \to U$ of degree $-1$. There will be many choices of $G$, but the main properties we want $G$ to satisfy are $\partial G |_{B^n} = 1_{B^n}$ and $G(B^n)\cap Z^{n-1}=0$. 

We start by defining $G$ on the $K$-linear space $B^n$. Let $b$ denote a basis element of $B^n$. The hypotheses of Lemma \ref{Lifting} hold for the map $b$ so there exists a map $f \in U^{n-1}$ such that $\partial(f) = b$. Define $G(b) = f$. 
Extending by $K$-linearity, we have the desired $G$ defined on $B^n$.

 For each integer $n$, let $L^n$ denote the $K$-vector space $G(B^{n+1})$.

\begin{prop} 
\label{LZsplit}
For every $n \in \Z$, $U^n = L^n \oplus Z^n$.
\end{prop}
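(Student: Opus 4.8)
The plan is to read off the decomposition directly from the two defining properties of $G$ recorded just before the proposition, namely $\partial G|_{B^{n+1}} = 1_{B^{n+1}}$ and $G(B^{n+1}) \cap Z^n = 0$, once the degree conventions are pinned down. Since $\partial$ raises endomorphism degree by one, $\partial$ maps $U^n$ into $U^{n+1}$ and $B^{n+1} = \partial(U^n)$; since $G$ lowers endomorphism degree by one, $G$ maps $U^{n+1}$ into $U^n$, so $L^n = G(B^{n+1}) \subseteq U^n$, and in particular the claimed sum lives in $U^n$. I would then prove the two halves of the direct sum separately: independence $L^n \cap Z^n = 0$, and spanning $U^n = L^n + Z^n$.

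For independence, take $x \in L^n \cap Z^n$. Writing $x = G(b)$ for some $b \in B^{n+1}$ and using $x \in Z^n$, the property $G(B^{n+1}) \cap Z^n = 0$ forces $x = 0$. (This property is itself a formal consequence of $\partial G|_{B^{n+1}} = 1_{B^{n+1}}$ and the $K$-linearity of $G$: if $G(b) \in Z^n$ then $b = \partial G(b) = 0$, whence $G(b) = 0$.)

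For spanning, take an arbitrary $f \in U^n$ and exhibit an explicit decomposition. Because $\partial f \in \im(\partial|_{U^n}) = B^{n+1}$, the element $l = G(\partial f)$ lies in $L^n$. Setting $z = f - l$ and invoking $\partial G|_{B^{n+1}} = 1_{B^{n+1}}$, I compute
$$\partial z = \partial f - \partial G(\partial f) = \partial f - \partial f = 0,$$
so $z \in Z^n$. Hence $f = l + z \in L^n + Z^n$, and together with independence this yields $U^n = L^n \oplus Z^n$ for every $n \in \Z$.

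I do not expect a genuine obstacle here: the statement is exactly the assertion that $G$ behaves like a contracting homotopy whose boundary operator splits off the coboundaries, and the whole content is the formal identity $f = G(\partial f) + (f - G(\partial f))$. The only point requiring care is matching the index shifts — that $G$ sends $B^{n+1}$ (not $B^n$) into $U^n$ — which is precisely what makes the first summand $G(\partial f)$ land in $L^n$ and the argument uniform in $n$, including negative $n$.
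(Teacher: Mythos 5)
Your proof is correct and matches the paper's argument essentially verbatim: the same independence step ($h = G(b) \in Z^n$ forces $b = \partial G(b) = 0$, hence $h = 0$) and the same explicit decomposition $f = G\partial(f) + (f - G\partial(f))$. Your added remark that $G(B^{n+1}) \cap Z^n = 0$ already follows from $\partial G|_{B^{n+1}} = 1_{B^{n+1}}$ is a fair observation, and the index bookkeeping is handled exactly as in the paper.
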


\begin{proof} Let $n \in \Z$. First suppose $h \in L^n \cap Z^n$. Then $\partial h = 0$ and $h = G(h')$ for some $h' \in B^{n+1}$. So $h' = \partial G(h') = \partial h = 0$. Hence $h = G(h') = G(0) = 0$.

Now let $f \in U^n$. By definition $G \partial(f) \in L^n$. Furthermore we have  $\partial(f - G\partial(f)) = \partial(f) - \partial G \partial(f) = \partial(f) - \partial(f) = 0$. So $f-G\partial(f) \in Z^n$. Since $f = G\partial(f) + (f-G\partial(f))$, we conclude that $U^n = L^n \oplus Z^n$.

\end{proof}

Proposition \ref{BHsplit} and Proposition \ref{LZsplit} allow us to extend $G$ to a $K$-linear map on all of $U$ by defining $G$ to be zero on $H^n\oplus L^n$. It is straightforward to check that $1_U - ip = \partial G + G \partial$ and the side conditions hold.

\section{$A_{\infty}$-algebra structure on $E(B)$}

We return to the setting of Sections 2 and 3.
We wish to compute only a few nonzero multiplications of an $A_{\infty}$-structure on $E(B)$, not the entire structure. Therefore, we define $i$ explicitly only on certain linearly independent elements of $E(B)$. Similarly, we explicitly define $G$ only on certain elements of $U=\End_B(\widetilde{Q}_{\bullet})$. We then apply the results of Section 4 to extend the definition of $i$ to all of $E(B)$ and  the definition of $G$ to all of $U$.

It is standard to identify $E^1(B)$ with the $K$-linear graded dual space $V^*$. For $0\le \ell\le n$, we denote the dual basis vectors of $a_\ell, b_\ell, c_\ell\in V$ by  $\alpha_\ell, \beta_\ell,$ and $ \g_\ell$, respectively. We suppress the $m_2$ notation for products in $E(B)$. For example, we denote $m_2(\b_1\otimes \a_{0})$ by $\b_1\a_0$.

The diagrams below define $i: E(B) \to U$ on $\a_n$, $\b_\ell \a_{\ell-1}$, $\b_0$, and $\g_0 \cdots \g_n$. Recall that we have chosen homogeneous $A$-bases and given the corresponding matrix representations for the resolution $(\widetilde{Q}_{\bullet}, d_{\bullet})$. We write $e_{i, j}$ for the standard matrix unit. We include just enough of the endomorphism to see the general pattern. Finally, to make cleaner statements it will be useful to define 
$$j = \begin{cases} \frac{3n}{2} + 1 &\text{ if $n$ is even } \\
  \frac{5n+3}{2} &\text{ if $n$ is odd } \\
    \end{cases}\quad
    \text{ and }\quad
k = \begin{cases} \frac{5n}{2} + 2 &\text{ if $n$ is even } \\
  \frac{3n+3}{2} &\text{ if $n$ is odd } \\
    \end{cases}.$$	   
We note that the $(j+1, k)$-entry of $M_{n+1}$ is $c_0$ and that this is the lower rightmost position in the block $U_1$ when $n$ is even.  When $n$ is odd, the $(j+1,k)$-entry is the lower rightmost position in $R_1$. The upper leftmost entry of $U_{p-1}$ or $R_p$ is always the $(n+2, 2n+2)$-entry of $M_d$ when $d\ge 3$ is odd. It is the $(2n+2,n+2)$-entry of $M_d$ when $d$ is even.

$i(\a_n)$ for $n>0$ (see Lemma \ref{ResolutionOfAi} and Lemma \ref{ResolutionCn}):
\begin{diagram}
& \widetilde{Q}_4 & \rTo & \widetilde{Q}_3 & \rTo & \widetilde{Q}_2 & \rTo & \widetilde{Q}_1 \\
\cdots & \dTo_{-b_n e_{2n+1, n+2}} & & \dTo_{ e_{n+1, 2n+2}} & & \dTo_{-b_n e_{2n+1, n+2}} & & \dTo_{ e_{n+1, 1}} \\
& \widetilde{Q}_3 & \rTo & \widetilde{Q}_2 & \rTo & \widetilde{Q}_1 & \rTo & \widetilde{Q}_0 \\
\end{diagram}

When $n=0$, we choose $i(a_0)_{t}=(-1)^{t+1}e_{1,2}$ for all $t>1$.

$i(\b_\ell \a_{\ell-1})$, $1 \leq \ell \leq n$ (see Lemma \ref{ResolutionA0} for $\ell=1$ and Lemma \ref{ResolutionOfAi} for $\ell>1$):
\begin{diagram}
& \widetilde{Q}_5 & \rTo & \widetilde{Q}_4 & \rTo & \widetilde{Q}_3 & \rTo & \widetilde{Q}_2 \\
 \cdots & \dTo_{c_\ell e_{\ell, \ell+1}} & & \dTo_{e_{2\ell, 2\ell+1}} & & \dTo_{c_\ell e_{\ell, \ell+1}} & &  \dTo_{e_{2\ell, 1}} \\
& \widetilde{Q}_3 & \rTo & \widetilde{Q}_2 & \rTo & \widetilde{Q}_1 & \rTo & \widetilde{Q}_0 \\
\end{diagram}

$i(\b_0)$:
\begin{diagram}
& \widetilde{Q}_3 & \rTo & \widetilde{Q}_2 &\rTo & \widetilde{Q}_1 \\
\cdots& \dTo_0 & & \dTo_0&    &\dTo_{e_{3n+3, 1}} \\
& \widetilde{Q}_2 & \rTo & \widetilde{Q}_1 & \rTo & \widetilde{Q}_0 \\
\end{diagram}

$i(\g_0 \cdots \g_n)$ (see Lemma \ref{ResolutionCn}):
\begin{diagram}
&\widetilde{Q}_{n+3} & \rTo & \widetilde{Q}_{n+2} & \rTo & \widetilde{Q}_{n+1} \\
\cdots&\dTo_{0} & & \dTo_{(-1)^{n+1} a_0 e_{k, 3n+3}} & & \dTo_{e_{j+1, 1}} \\
&\widetilde{Q}_{2} & \rTo & \widetilde{Q}_{1} & \rTo & \widetilde{Q}_{0} \\
\end{diagram}

We henceforth suppress the inclusion map $i$. From the definitions, it is easy to check that  $\l_2(\b_1 \a_0 \tensor \b_0) = 0$ and  $\l_2(\b_\ell \a_{\ell-1} \tensor \b_{\ell-1} \a_{\ell-2}) = 0$ for $2\le \ell\le n$. It is also easy to see that $\l_2(\b_0 \tensor \g_0 \cdots \g_n) = (-1)^{n+1} a_0 e_{k, 1}$. 

Next, we define values of the homotopy $G$ on certain elements. It is important to observe that the elements on which we define $G$ are linearly independent coboundaries in $U$. Each morphism below is in a different graded component of $U$, so linear independence is clear. To check that $G$ is well-defined, it suffices to observe that $\partial G = 1$ on the element in question. Recall that we define 
$$\l_\ell = \l_2 \sum_{ \substack{s+t = \ell \\
s, t \geq 1}}
(-1)^{s+1}(G\l_s \tensor G\l_t).$$

First, let $G\l_2(\b_0\tensor \g_0\cdots \g_n)=$
\begin{diagram}
& \widetilde{Q}_{n+4} & \rTo & \widetilde{Q}_{n+3} & \rTo &\widetilde{Q}_{n+2}\\
\cdots &\dTo_{(-1)^{n+1}e_{k,1}} & &\dTo_{\begin{matrix} a_1e_{j,2}\\+e_{j+1,1}\\ \end{matrix}} & & \dTo_{(-1)^{n+1}e_{k,1}}\\
  &\widetilde{Q}_3 & \rTo & \widetilde{Q}_2 & \rTo & \widetilde{Q}_1\\
\end{diagram}

Next, since $\lambda_2(\b_1\a_0\tensor\b_0)=0$, we have 
$$\l_{3}(\b_1\a_{0} \tensor \b_0\tensor \g_0\cdots \g_n)=-\l_2(\b_1\a_0\tensor G\l_2(\b_0\tensor \g_0\cdots \g_n)).$$

We choose
$G\l_{3}(\b_1\a_{0} \tensor \b_0\tensor \g_0\cdots \g_n)=$
\begin{diagram}
&   \widetilde{Q}_{n+5} & \rTo & \widetilde{Q}_{n+4} & \rTo &\widetilde{Q}_{n+3}\\
\cdots  &\dTo_{-e_{j,2}} & &\dTo_{(-1)^{n+1} a_{2}e_{k-1,4}} & & \dTo_{- e_{j,2}}\\
   &\widetilde{Q}_3 & \rTo & \widetilde{Q}_2 & \rTo & \widetilde{Q}_1\\
\end{diagram}

For $2\le \ell\le n$, since $\lambda_2(\b_\ell\a_{\ell-1}\tensor \b_{\ell-1}\a_{\ell-2})=0$, we have by induction,
$$\l_{\ell+2}(\b_\ell\a_{\ell-1}\tensor\cdots\tensor\b_1\a_0\tensor\b_0\tensor\g_0\cdots\g_n)=$$
$$-\l_2\left(\b_\ell\a_{\ell-1}\tensor G\l_{\ell+1}(\b_{\ell-1}\a_{\ell-2}\tensor\cdots\tensor\b_1\a_0\tensor\b_0\tensor\g_0\cdots\g_n)\right)$$

where $G\l_{\ell+2}(\b_\ell\a_{\ell-1}\tensor\cdots\tensor\b_1\a_0 \tensor \b_0\tensor \g_0\cdots \g_n)=$
\begin{diagram}
&  \widetilde{Q}_{n+\ell+4} & \rTo & \widetilde{Q}_{n+\ell+3} & \rTo &\widetilde{Q}_{n+\ell+2}\\
\cdots &\dTo_{{X(\ell)} e_{x(\ell),\ell+1}} & &\dTo_{ {Y(\ell)} a_{\ell+1}e_{y(\ell),2(\ell+1)}} & & \dTo_{ {X(\ell)}e_{x(\ell),\ell+1}}\\
  &\widetilde{Q}_3 & \rTo & \widetilde{Q}_2 & \rTo & \widetilde{Q}_1\\
\end{diagram}

The matrix indices are $x(\ell)=\frac{1}{2}\left(k+j+(-1)^{\ell}(k-j)\right)-\left\lfloor\dfrac{\ell}{2}\right\rfloor$ and $y(\ell)=x(\ell+1)$, and the signs, which repeat with period 4, are determined by $Y(\ell)=(-1) \widehat{\quad}{\left\lfloor \dfrac{2\ell+1+(-1)^n}{4}\right\rfloor}$ and $X(\ell)=Y(\ell+1)$. 

\begin{thm} 
The algebra $E(B)$ admits a canonical $A_{\infty}$-structure for which $m_{j+3}$ is nonzero for all $0\le j\le n$. 
\end{thm}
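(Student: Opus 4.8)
The plan is to show that the multiplication $m_{j+3}=p\l_{j+3}i$ is nonzero on the specific tensor
$$\b_{j}\a_{j-1}\tensor\cdots\tensor\b_1\a_0\tensor\b_0\tensor\g_0\cdots\g_n$$
for each $0\le j\le n$, by computing $p\l_{j+3}$ applied to this element and verifying the result is a nonzero dual basis vector. The key mechanism is already assembled in the inductive recipe preceding the statement: because $\l_2$ kills each consecutive product $\b_\ell\a_{\ell-1}\tensor\b_{\ell-1}\a_{\ell-2}$ and the product $\b_1\a_0\tensor\b_0$, the Stasheff-type recursion for $\l_{\ell+2}$ collapses so that only the single term
$$-\l_2\bigl(\b_\ell\a_{\ell-1}\tensor G\l_{\ell+1}(\b_{\ell-1}\a_{\ell-2}\tensor\cdots\tensor\b_0\tensor\g_0\cdots\g_n)\bigr)$$
survives. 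First I would record this collapse carefully, confirming that every other summand in the definition of $\l_{\ell+2}$ contains a factor $G\l_s$ applied to a tensor on which $\l_s$ already vanishes (for $s=2$ by the computed products, and for the smaller higher $\l_s$ by the inductive vanishing of $p\l_s$ off the flagged elements).

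Next I would run the induction explicitly. The base case is $G\l_2(\b_0\tensor\g_0\cdots\g_n)$, whose matrix representative is given in the diagram just before the statement; composing with $\b_1\a_0$ via $\l_2$ and applying the chosen $G$ produces $G\l_3(\b_1\a_0\tensor\b_0\tensor\g_0\cdots\g_n)$, again read off from the displayed diagram. The inductive step is to compose the previously constructed homotopy value $G\l_{\ell+1}(\b_{\ell-1}\a_{\ell-2}\tensor\cdots\tensor\g_0\cdots\g_n)$, which is a scalar multiple of a matrix unit $X(\ell-1)e_{x(\ell-1),\ell}$ in the appropriate graded piece of $U$, with $i(\b_\ell\a_{\ell-1})=c_\ell e_{\ell,\ell+1}$ under $\l_2$, and then verify that applying $G$ yields exactly the matrix unit $X(\ell)e_{x(\ell),\ell+1}$ asserted in the final diagram. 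This reduces to checking at each cohomological degree that the relevant composite is a coboundary lifted correctly by $G$ (so that $\partial G=1$ on it), and that the index and sign bookkeeping given by $x(\ell)$, $y(\ell)$, $X(\ell)$, $Y(\ell)$ is internally consistent with the block structure of $M_d$ recorded in the paragraph naming the entries $c_0$ and $a_0$.

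To conclude that $m_{j+3}\ne0$, I would apply $p=\phi$ to $\l_{j+3}$ evaluated on the flagged tensor. By the recursion, $\l_{j+3}$ of this element equals $-\l_2(\b_j\a_{j-1}\tensor G\l_{j+2}(\cdots))$, and composing the matrix unit supplied by $G\l_{j+2}$ with $i(\b_j\a_{j-1})=c_j e_{j,j+1}$ (or the $\ell=1$ and $\ell=0$ endpoint cases with $i(\b_1\a_0)$, $i(\b_0)$) produces a homomorphism whose degree-$(j+3)$ component is represented by a matrix unit with a nonzero scalar entry in the chosen $B$-basis. Applying $\e$ in the definition $\phi(f)=\e f_{j+3}$ then returns a nonzero multiple of a dual basis vector $\m_{j+3}^\ell$, so $m_{j+3}$ is nonzero. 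Finally, since the construction of $i$, $p$, and $G$ in Section 4 realizes SDR-data satisfying the side conditions, Merkulov's Theorem \ref{Merkulov} guarantees that $\{m_i\}$ is an $A_\infty$-structure meeting the hypotheses of Theorem \ref{Kadeishvili}, hence canonical.

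The main obstacle I expect is the index and sign arithmetic: verifying that the matrix units at each step of the induction land in the correct rows and columns dictated by the block decomposition of the resolution (the $\Gamma_i$, $T_i$, $R_p$, $U_{p-1}$, $S_{i,p}$, $T_{i,p}$ blocks), and that the four-periodic signs $X(\ell),Y(\ell)$ propagate consistently so that $G$ actually satisfies $\partial G=1$ on each coboundary in question. The conceptual skeleton — vanishing of $\l_2$ forcing a single surviving term, then a telescoping composition of matrix units — is clean, but the bookkeeping tying $x(\ell)$ and $y(\ell)$ to the specific positions of $a_\ell$, $b_\ell$, $c_\ell$ across the star-product blocks is where the real work lies.
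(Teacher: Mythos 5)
Your conceptual skeleton---the collapse of the Merkulov recursion because $\l_2$ kills consecutive pairs, then a telescoping composition of matrix units read off the displayed diagrams---is the same mechanism the paper uses. But there is a genuine error at the evaluation step: the tensor you flag, $\b_j\a_{j-1}\tensor\cdots\tensor\b_1\a_0\tensor\b_0\tensor\g_0\cdots\g_n$, has only $j+2$ factors ($j$ factors of the form $\b_\ell\a_{\ell-1}$, plus $\b_0$, plus $\g_0\cdots\g_n$), so $m_{j+3}$ cannot even be applied to it. The paper's test element carries an additional \emph{leading} factor: it is $\a_j\tensor\b_j\a_{j-1}\tensor\cdots\tensor\b_0\tensor\g_0\cdots\g_n$, and this missing $\a_j$ is visible elsewhere in your write-up (your recursion step misindexes $G\l_{j+2}$ versus $G\l_{j+1}$, and your ``endpoint cases with $i(\b_1\a_0)$, $i(\b_0)$'' treat $\b_j\a_{j-1}$ as the leftmost factor), so it is not a typo but a structural omission.

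The omission is fatal rather than cosmetic. If one corrects the arity by evaluating $m_{j+2}$ on your $(j+2)$-fold tensor, the answer is zero: $\l_{j+2}$ of that tensor is the coboundary $\partial\bigl(G\l_{j+2}(\cdots)\bigr)$, whose component landing in $Q_0$ factors through $d_1$ and hence has image in $(Q_0)_+$ by minimality of the resolution, so $p=\phi$ (postcomposition with $\e$) annihilates it. The same problem defeats your final step as written: composing the matrix unit supplied by $G$ with $i(\b_j\a_{j-1})=c_j e_{j,j+1}$ on the left produces matrix entries divisible by $c_j$, never a scalar entry, so $\e f$ vanishes---contrary to your claim that this composition yields a nonzero scalar entry. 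The nonzero scalar arises only when the leftmost factor is the cocycle $i(\a_j)$, whose components are scalar matrix units such as $e_{n+1,2n+2}$. This is exactly how the paper concludes: since $pG=0$, only the $s=1$ term (with the formal convention $G\l_1=-1_U$) survives in $p\l_{j+3}$, giving $p\l_{j+3}=-p\l_2(1\tensor G\l_{j+2})$, and then $-p\l_2\bigl(\a_j\tensor G\l_{j+2}(\b_j\a_{j-1}\tensor\cdots\tensor\b_0\tensor\g_0\cdots\g_n)\bigr)$ is a sign times a dual basis vector $\m$, hence nonzero. Restoring the leading $\a_j$ (with a chosen cocycle $i(\a_j)$ for $j<n$) and this outer $s=1$ reduction repairs your argument along exactly the paper's lines; without them the proof does not go through.
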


\begin{proof}
By Proposition \ref{Merkulov}, $m_{n+3}=p\lambda_{n+3}i$. We will show that
$$
m_{n+3}(\a_n\otimes \b_n\a_{n-1}\otimes\cdots\otimes \b_1\a_0\otimes \b_0\otimes \g_0\cdots \g_n)\neq 0.
$$

Since $pG=0$, we have
\begin{align*}
p\l_{n+3} &= p\l_2 \sum_{ \substack{s+t = n+3 \\
s, t \geq 1}}
(-1)^{s+1}(G\l_s \tensor G\l_t)\\
&=p\lambda_2(G\lambda_1\otimes G\lambda_{n+2})\\
&=-p\lambda_2(1\otimes G\lambda_{n+2}).
\end{align*}

By the definitions of $G$ above, we see that
$$-p\l_2(\a_n\tensor G\l_{n+2}(\b_n\a_{n-1}\tensor\cdots\tensor\b_1\a_0\tensor\b_0\tensor\g_0\cdots\g_n))$$ $$=(-1)^{X(n)+1}\m_{2n+2}^{x(n)}$$
where $\m_{2n+2}^{x(n)}$ is a graded dual basis element in $E^{2n+2}(A)$. Hence 
$$m_{n+3}(\a_n\otimes \b_n\a_{n-1}\otimes\cdots\otimes\b_1\a_0\otimes \b_0\otimes \g_0\cdots \g_n)\neq 0.$$

By choosing a cocycle $i(\a_j)$, it is straightforward to check that with the above definitions,  
$m_{j+3}(\a_j\otimes \b_j\a_{j-1}\otimes\cdots\otimes \b_1\a_0\otimes \b_0\otimes \g_0\cdots \g_n)\neq 0$ for all $0\le j \le n$. 

\end{proof}

\section{Detecting the $\K_2$ Condition}

In this section we present two monomial $K$-algebras whose Yoneda algebras admit very similar canonical $A_{\infty}$-structures. Only one of the algebras is $\K_2$. These examples illustrate that the $\K_2$ property is not detected by any obvious vanishing patterns among higher multiplications. 

Let $V$ be the $K$-vector space spanned by $\{x, y, z, w\}$. Let $W^1\subset T(V)$ be the $K$-linear subspace on basis  $\{y^2zx, zx^2, y^2w\}$. Let $W^2\subset T(V)$ be the $K$-linear subspace on basis $\{ y^2z, zx^2, y^2w^2\}$ . Let $A^1=T(V)/\left<W^1\right>$ and $A^2=T(V)/\left<W^2\right>$.
The algebra $A^2$ was studied in \cite{CS}.

In Section 5 of \cite{CS}, the authors give an algorithm for producing a minimal projective resolution of the trivial module for a monomial $K$-algebra. Applying the algorithm, we obtain minimal projective resolutions of $_{A^1}K$ and $_{A^2}K$ of the form
$$ 0\rightarrow A^t(-5)\xrightarrow{M^t_3} A^t(-3,-3,-4)\xrightarrow{M^t_2}  A^t(-1)^{\oplus 4}\xrightarrow{M^t_1} A^t\rightarrow K\rightarrow 0$$
for $t\in\{1,2\}$ where
$$M^1_3=M^2_3=\begin{pmatrix} 0 & y^2 & 0\\ \end{pmatrix}$$
$$
M^1_2=\begin{pmatrix} 0 & 0 & 0 & y^2\\ zx & 0 & 0 & 0\\ y^2z & 0 & 0 & 0\\ \end{pmatrix}\quad
M^2_2=\begin{pmatrix} 0 & 0 & y^2 & 0\\ zx & 0 & 0 & 0\\ 0 & 0 & 0 & y^2w\\ \end{pmatrix}
$$
$$M^1_1=M^2_1=\begin{pmatrix} x & y & z & w\\ \end{pmatrix}^T.$$

Theorem \ref{K2criterion} shows that $A^1$ is a $\K_2$-algebra and $A^2$ is not, because $y^2zx$ is an essential relation in $A^1$ but not in $A^2$. We note that this implies all Yoneda products on $E(A^2)$ are zero. 

We make the identifications $E^1(A^1)=V^*=E^1(A^2)$, $E^2(A^1)=(W^1)^*$, and $E^2(A^2)=(W^2)^*$.
We choose $K$-bases for $E(A^1)$ and $E(A^2)$ as follows. Let $X$, $Y$, $Z$, and $W$ denote the $K$-linear duals of $x$, $y$, $z$, and $w$ respectively. Let $R^1_1$, $R^1_2$, and $R^1_3$ be the $K$-linear duals of the tensors $y^2w$, $zx^2$, $y^2zx\in W^1$, respectively. Let $R^2_1$, $R^2_2$, and $R^2_3$ be the $K$-linear duals of the tensors $y^2z$, $zx^2$, $y^2w^2\in W^2$, respectively.  For $t\in\{1,2\}$, let $\alpha^t$ be a nonzero vector in $E^{3,5}(B^t)$. We denote the $A_{\infty}$-structure map on $E(A^t)^{\otimes i}$ by $m^t_i$.

A standard calculation in the algebra $E(A^1)$ determines the Yoneda product ${m^1_2(R^1_3\otimes X)=c\alpha^1}$ for some $c\in K^*$. It is straightforward to check that Yoneda products of all other pairs of our specified basis vectors are zero. 

We note that, for degree reasons, the only higher multiplications on $E=E(A^t)$ which could be nonzero are the following.
\begin{align*}
m^t_4&: (E^1)^{\otimes 4}\rightarrow E^{2,4} &
m^t_3&: (E^1)^{\otimes 3}\rightarrow E^{2,3}\\
m^t_3&: E^1\otimes E^1\otimes E^{2,3}\rightarrow E^{3,5}&
m^t_3&: E^1\otimes E^{2,3}\otimes E^1\rightarrow E^{3,5}\\
m^t_3&: E^{2,3}\otimes E^1\otimes E^1\rightarrow E^{3,5}&
\end{align*}

We recall from the introduction that the $A_{\infty}$-structure $\{m_i\}$ on a graded $K$-vector space $E$ determines a $K$-linear map $\oplus m_i: T(E)_+\rightarrow E$.

\begin{prop}
\label{MonomialExample}
There exists a canonical $A_{\infty}$-structure on $E(A^1)$ such that the map $\oplus m^1_i$ vanishes on all monomials of $T(E(A^1))_+$ except $$\{YYZX,\ ZXX,\ YYW,\ YYR^1_2,\ R^1_3X\}.$$
There exists a canonical $A_{\infty}$-structure on $E(A^2)$ such that the map $\oplus m^2_i$ vanishes on all monomials of $T(E(A^2))_+$ except
$$\{YYWW,\ ZXX,\ YYZ,\ YYR^2_2,\ R^2_1XX\}.$$
\end{prop}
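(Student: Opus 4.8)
The plan is to use the machinery of Section 4 together with the explicit minimal resolutions given for $A^1$ and $A^2$, building the SDR-data $(i,p,G)$ by hand and then reading off which multiplications $m_r = p\lambda_r i$ are nonzero. Since each resolution has length $3$, the only projective modules in $U=\End_{A^t}(Q_{\bullet})$ live in homological degrees $0,1,2,3$, and the degree-shift bookkeeping attached to the summands $A^t(-1)^{\oplus 4}$, $A^t(-3,-3,-4)$, $A^t(-5)$ forces all higher $\lambda_r$ to vanish outside the short list of internal degrees displayed just before the statement. So first I would fix the homogeneous $A^t$-bases for each $Q_n$ (the matrix units corresponding to the columns of $M^t_n$) and the dual bases $\{\mu_n^\ell\}$ for $E^n(A^t)$, exactly as in Section 4.

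Next I would define $i$ on the basis vectors $X,Y,Z,W \in E^1$ and on the $E^2$ generators $R^t_j$ by choosing explicit cocycle lifts, and define $G$ on the relevant coboundaries, precisely as was done for $E(B)$ in Section 5. The point is that for a length-$3$ resolution there are very few coboundaries to kill: $G$ need only be specified in the finitely many homological degrees where $\partial$ is nonzero on the image of $\lambda_2$ and $\lambda_3$. I would then compute $\lambda_2$ on all pairs of specified basis vectors --- most products vanish, and the Yoneda product $m^1_2(R^1_3\otimes X)=c\,\alpha^1$ is already recorded --- and use the recursion $\lambda_r = \lambda_2\sum_{s+t=r}(-1)^{s+1}(G\lambda_s\otimes G\lambda_t)$ to evaluate $\lambda_3$ and $\lambda_4$ on the candidate monomials. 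Applying $p=\phi$ (which annihilates everything in the image of $G$, by the side condition $pG=0$) then collapses each computation to a single scalar, showing $\oplus m_i$ is supported exactly on the five listed monomials in each case.

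The routine part is the direct verification that $\lambda_2$ vanishes on all the irrelevant pairs; the substantive content is checking the two genuinely nonzero higher products in each algebra, namely the length-$4$ multiplication landing in $E^{2,4}$ (the monomials $YYZX$ and $YYWW$) and the length-$3$ multiplication into $E^{3,5}$ coming from the composite of two $\lambda_2$'s through $G$ (the monomials $R^1_3X$, $R^2_1XX$, etc.). For these I would track how the relation tensors $y^2zx$, $y^2w^2$ factor through the degree-$(-4)$ and degree-$(-5)$ summands of the resolution, since it is exactly the essential-relation structure detected by Theorem \ref{K2criterion} that controls whether the corresponding $G\lambda_2$ feeds a nonzero term into $\lambda_r$.

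The main obstacle I anticipate is not any single computation but the coherence of the choices: $i$ and $G$ must be mutually compatible (the lifts $i(\mu)$ must be genuine cocycles with the commuting/anticommuting sign pattern, and $G$ must satisfy $\partial G|_{B^n}=1$ and the side conditions $G^2=0$, $Gi=0$, $pG=0$ from Proposition \ref{LZsplit} and Proposition \ref{BHsplit}), and one must ensure these local choices do not force spurious nonzero multiplications on some monomial outside the advertised list. In other words, the real work is confirming that the canonical structure can be arranged so that $\oplus m_i$ is supported on exactly those five monomials and no others --- this requires verifying that every other monomial in $T(E)_+$ either lands outside the admissible internal degrees or gets sent by $\lambda_r$ into $\operatorname{im} G = \ker p$.
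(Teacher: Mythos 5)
Your proposal is correct, and for $E(A^2)$ it is essentially the paper's own argument: the paper builds exactly the SDR-data you describe from the displayed length-$3$ resolution, defining $i$ on $X,Y,Z,W$ and the $R^2_j$, defining $G\lambda_2$ on $XX$, $YZ$, $YR^2_2$, $WW$ together with $G\lambda_3(YWW)$, computing $m_3(ZXX)$, $m_3(YYZ)$, $m_3(YYR^2_2)$, $m_3(R^2_1XX)$ and $m_4(YYWW)=-p\lambda_2\bigl(Y\otimes G\lambda_3(YWW)\bigr)$ to be nonzero, and dispatching all remaining vanishing by inspection using the internal-degree constraints listed just before the proposition. Where you genuinely diverge is on $E(A^1)$: the paper constructs no second set of SDR-data there. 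Instead it invokes the known fact (\cite{LPWZ2}, \cite{Keller1}) that one may choose a canonical structure whose restriction of $m_n$ to $E^1(A^1)^{\otimes n}$ is dual to the inclusion of essential relations $(I/I')_n\hookrightarrow V^{\otimes n}$ --- which immediately yields $m_4(YYZX)$, $m_3(ZXX)$, $m_3(YYW)$ and kills all other monomials in $(E^1)^{\otimes n}$ --- and then observes that the Stasheff identity $\mathbf{SI}(5)$, together with the recorded Yoneda product $m^1_2(R^1_3\otimes X)=c\alpha^1$, determines the remaining admissible multiplications into $E^{3,5}$. Your uniform Merkulov computation for $A^1$ would certainly succeed and is self-contained, but it costs a second round of explicit cocycle lifts and homotopies plus a second by-inspection vanishing check; the paper's route buys brevity and makes transparent that the $(E^1)^{\otimes n}$-part of the structure is forced by the essential relations rather than chosen, with $\mathbf{SI}(5)$ doing the rest. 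One small slip to fix: in your penultimate paragraph you group $R^1_3X$ with the ``length-$3$ multiplications into $E^{3,5}$''; it is a binary Yoneda product (an $m_2$), as you yourself correctly note earlier, so it involves no $G$ at all.
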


We remark that these canonical structures are not the only canonical $A_{\infty}$-structures on $E(A^1)$ and $E(A^2)$. However, it can be shown that the map $\oplus m^t_i$ associated to any other canonical structure on $E(A^t)$ is non-vanishing on the vectors listed in the proposition.

It is well known (see \cite{LPWZ2}, \cite{Keller1}) that there is a canonical $A_{\infty}$-structure on the Yoneda algebra of a graded algebra $A=T(V)/I$ such that the restriction of $m_n$ to $E^1(A)^{\otimes n}$ is dual to the natural inclusion of degree $n$ essential relations $(I/I')_n\hookrightarrow V^{\otimes n}$. If we choose such an $A_{\infty}$-structure for $E(A^1)$, the Stasheff identity $\mathbf{SI}(5)$ determines the remaining structure on $E(A^1)$, giving the first part of the proposition. We prove the proposition for $E(A^2)$.

\begin{proof}
Let $Q_{\bullet}$ be the minimal projective resolution of $_{A^2}K$ given above, with $Q_0=A^2$. Let $e_{i,j}$ denote the $(i,j)$ matrix unit.  We define maps $i$ and $G$ as in Section 4. When defining $G$, we suppress the inclusion maps $i$. We recall that $\lambda_2$ is multiplication in $\End_{A^2}(Q_{\bullet})$ and $\lambda_3=\lambda_2(G\lambda_2\otimes 1-1\otimes G\lambda_2)$. We define
$$i(X)=
\begin{diagram}[width=20pt]
Q_2 & \rTo &  Q_1\\
\dTo_{-ze_{2,1}}  & & \dTo_{e_{1,1}}\\
Q_1 & \rTo & Q_0\\
\end{diagram}
\quad i(Y)=
\begin{diagram}
Q_1\\
\dTo_{e_{2,1}}\\
Q_0\\
\end{diagram}
\quad i(Z)=
\begin{diagram}[width=20pt]
 Q_2 & \rTo &  Q_1\\
 \dTo_{-ye_{1,2}}  & & \dTo_{e_{3,1}}\\
 Q_1 & \rTo & Q_0\\
\end{diagram}
$$

$$
i(W)=
\begin{diagram}[width=20pt]
 Q_2 & \rTo &  Q_1\\
 \dTo_{-y^2e_{3,4}}  & & \dTo_{e_{4,1}}\\
 Q_1 & \rTo & Q_0\\
\end{diagram}
\quad
 i(R^2_1)=
\begin{diagram}
Q_2\\
\dTo_{e_{1,1}}\\
Q_0\\
\end{diagram}
\quad i(R^2_2)=
\begin{diagram}[width=20pt]
 Q_3 & \rTo &  Q_2\\
 \dTo_{ye_{1,2}}  & & \dTo_{e_{2,1}}\\
 Q_1 & \rTo & Q_0\\
\end{diagram}
$$

$$
  i(R^2_3)=
\begin{diagram}
Q_2\\
\dTo_{e_{3,1}}\\
Q_0\\
\end{diagram}
\quad G\lambda_2(XX)=
\begin{diagram}[width=20pt]
 Q_3 & \rTo &  Q_2\\
 \dTo_{e_{1,1}}  & & \dTo_{-e_{2,3}}\\
 Q_2 & \rTo & Q_1\\
\end{diagram}
\quad G\lambda_2(YZ)=
\begin{diagram}
Q_2\\
\dTo_{-e_{1,2}}\\
Q_1\\
\end{diagram}
$$

$$G\lambda_2(YR^2_2)=
\begin{diagram}
Q_3\\
\dTo_{e_{1,2}}\\
Q_1\\
\end{diagram}
\quad G\lambda_2(WW)=
\begin{diagram}
Q_2\\
\dTo_{-ye_{3,2}}\\
Q_1\\
\end{diagram}
\quad G\lambda_3(YWW)=
\begin{diagram}
Q_2\\
\dTo_{e_{3,2}}\\
Q_1\\
\end{diagram}
$$

We recall that $m_i=p\lambda_i$. With the definitions above, it is straightforward to check that $m_3(ZXX)$, $m_3(YYZ)$, $m_3(YYR^2_2)$ and $m_3(R^2_1XX)$ are nonzero. Recalling that 
$p\lambda_4(YYWW)=-p\lambda_2(YG\lambda_3(YWW))$, we see that $m_4(YYWW)$ is nonzero.

It remains to check that all other higher multiplications are zero. This is easily done by inspection.

\end{proof}

\bibliographystyle{amsplain}
\bibliography{bibliog}

\end{document}